\newenvironment{proof}{\paragraph{Proof:}}{\hfill$\square$}
\def\R{\mathbb R}
\def\C{\mathbb C}
\def\Q{\mathbb Q}
\def\Z{\mathbb Z}
\def\L{\mathfrak{L}}
\def\P{\mathbb P}
\def\qq{\mathbb q}
\def\1{\mathbbm 1}
\def\g{\mathfrak{g}}
\def\fC{\mathfrak{C}}
\def\fD{\mathfrak{D}}
\def\m{\mathfrak{m}}
\def\n{\mathfrak{n}}
\def\L{\mathscr{L}}
\def\V{\mathscr{V}}
\def\cT{\mathscr{T}}
\def\cO{\mathscr{O}}
\def\cK{\mathscr{K}}
\def\cU{\mathscr{U}}
\def\cA{\mathscr{A}}
\def\B{\mathsf{B}}
\def\Sh{\mathsf{Sh}}
\def\Rt{\mathsf{Rt}}
\def\supp{\text{supp}\,}
\newcommand{\inner}[2]{\langle #1,#2\, \rangle}
\newcommand{\interior}[1]{\raise0.2ex\hbox{$\displaystyle{\mathop{#1}^{\circ}}$}}
\newcommand{\intp}[1]{\lfloor #1 \rfloor}
\renewcommand\phi{\varphi}
\newtheorem{theorem}{Theorem}[section]
\newtheorem{proposition}[theorem]{Proposition}
\newtheorem{corollary}[theorem]{Corollary}
\newtheorem{remark}[theorem]{Remark}
\newtheorem{conjecture}[theorem]{Conjecture}
\newtheorem{lemma}[theorem]{Lemma}
\newtheorem{example}[theorem]{Example}
\numberwithin{equation}{section}
\newcommand{\id}{\mathrm{id}}
\newcommand{\tr}{\mathrm{tr}\,}
\newcommand{\Hom}{\mathrm{Hom}}
\newcommand{\End}{\mathrm{End}}
\newcommand{\Ell}{\mathrm{Ell}}
\newcommand{\Lie}{\mathrm{Lie}\,}
\newcommand{\NS}{\mathrm{NS}}
\newcommand{\diag}{\mathrm{diag}}
\newcommand{\Attr}{\mathrm{Attr}}
\newcommand{\Spec}{\mathrm{Spec}\,}
\newcommand{\Hilb}{\mathrm{Hilb}}
\newcommand{\Sym}{\mathrm{Sym}}
\newcommand{\vir}{\mathrm{vir}}
\newcommand{\Pic}{\mathrm{Pic}}
\newcommand{\eps}{\varepsilon}
\newcommand{\Stabb}{\mathrm{\bf Stab}}
\newcommand{\Stab}{\mathrm{Stab}}
\newcommand{\stab}{\mathrm{stab}}
\newcommand{\rk}{\mathrm{rk}\,}
\newcommand{\ahat}{\widehat{\mathsf{a}}}
\newcommand{\Mon}{\mathrm{Mon}}
\newcommand{\ind}{\mathrm{ind}}
\newcommand{\Glue}{\mathsf{Glue}}
\newcommand{\sA}{\mathsf{A}}
\newcommand{\sE}{\mathsf{E}}
\newcommand{\sR}{\mathsf{R}}
\newcommand{\sT}{\mathsf{T}}
\newcommand{\sK}{\mathsf{K}}
\newcommand{\sX}{\mathsf{X}}
\newcommand{\sw}{\mathsf{w}}
\newcommand{\sv}{\mathsf{v}}
\newcommand{\sH}{\mathsf{H}}
\newcommand{\Walls}{\mathsf{Walls}}
\newcommand{\Res}{\mathsf{Res}}
\newcommand{\flop}{\mathrm{flop}}
\newcommand{\QM}{\mathsf{QM}}
\begin{document}
\doublespacing  

\begin{titlepage}
\begin{center}

\begin{singlespacing}
\vspace*{6\baselineskip}
Elliptic stable envelopes and $3d$ mirror symmetry\\
\vspace{3\baselineskip}
Iakov Kononov\\
\vspace{18\baselineskip}
Submitted in partial fulfillment of the\\
requirements for the degree of\\
Doctor of Philosophy\\
under the Executive Committee\\
of the Graduate School of Arts and Sciences\\
\vspace{3\baselineskip}
COLUMBIA UNIVERSITY\\
\vspace{3\baselineskip}
\the\year
\vfill

\end{singlespacing}

\end{center}
\end{titlepage}


\begin{titlepage}
\begin{singlespacing}
\begin{center}

\vspace*{35\baselineskip}

\textcopyright  \,  \the\year\\
\vspace{\baselineskip}	
Yakov Kononov\\
\vspace{\baselineskip}	
All Rights Reserved
\end{center}
\vfill

\end{singlespacing}
\end{titlepage}

\pagenumbering{gobble}


\begin{titlepage}
\begin{center}

\vspace*{5\baselineskip}
\textbf{\large Abstract}

Elliptic stable envelopes and $3d$ mirror symmetry

Iakov Kononov
\end{center}
\hspace{10mm}
In this thesis we discuss various classical problems in enumerative geometry. We are focused on ideas and methods which can be used explicitly for practical computations. Our approach is based on studying the limits of elliptic stable envelopes with shifted equivariant or K\"ahler variables from elliptic cohomology to K-theory.

We prove that for a variety $\sX$  we can obtain K-theoretic stable envelopes for the variety $\sX^G$ of the $G$-fixed points of $\sX$, where $G$ is a cyclic group acting on $\sX$ preserving the symplectic form. 

We formalize the notion of symplectic duality, also known as 3-dimensional mirror symmetry. We obtain a factorization theorem about the limit of elliptic stable envelopes to a wall, which generalizes the result \cite{AOElliptic}. This approach allows us to extend the action of quantum groups, quantum Weyl groups \cite{EV}, R-matrices etc., to  actions on the K-theory of the symplectic dual variety. In the case of $\sX = \Hilb(\C^2, n)$, our results imply  the conjectures of E.Gorsky and A.Negut from \cite{NegGor}.

We propose a new approach to K-theoretic quantum difference equations.

\vspace*{\fill}
\end{titlepage}

\pagenumbering{roman}
\setcounter{tocdepth}{1}
\renewcommand{\cftchapdotsep}{\cftdotsep}  
\renewcommand{\cftchapfont}{\normalfont}  
\renewcommand{\cftchappagefont}{}  
\renewcommand{\cftchappresnum}{Chapter }
\renewcommand{\cftchapaftersnum}{:}
\renewcommand{\cftchapnumwidth}{5em}
\renewcommand{\cftchapafterpnum}{\vskip\baselineskip} 
\renewcommand{\cftsecafterpnum}{\vskip\baselineskip}  
\renewcommand{\cftsubsecafterpnum}{\vskip\baselineskip} 
\renewcommand{\cftsubsubsecafterpnum}{\vskip\baselineskip} 

\titleformat{\chapter}[display]
{\normalfont\bfseries\filcenter}{\chaptertitlename\ \thechapter}{0pt}{\large{#1}}

\renewcommand\contentsname{Table of Contents}

\begin{singlespace}
\tableofcontents
\end{singlespace}

\clearpage

\phantomsection


\clearpage
\begin{center}

\vspace*{1\baselineskip}
\textbf{\large Acknowledgements}
\end{center}

\hspace{7mm}

First of all, I want to thank my advisor Andrei Okounkov for his encouragement. He introduced me to a very modern and interesting mathematics. He suggested problems and my work on it was guided by his incredible intuition. Andrei has been a great advisor who has always been  happy to meet and explain his ideas with enthusiasm. The ideas he taught me are priceless and have changed my view of mathematics substantially.

I would also like to thank my friend and coathor Andrey Smirnov with whomst I spent many hours working on these problems. I enjoyed our collaboration and it was very productive. We have had  many fascinating discussions and I hope to work with him in future again. Most of the results of this dissertation is our joint work and his contribution is greatly appreciated.

I thank Alexander Belavin,  Doron Gepner, Misha Khovanov, Alexey Morozov, Nikita Nekrasov, Victor Ostrik, Boris Feigin  for their unwavering support and encouragement.

I benefitted a lot from numerous discussions with  Mina Aganagic, Konstantin Aleshkin, Noah Arbesfeld, Ivan Danilenko, Pavel Etingof, Igor Krichever, Henry Liu, Melissa Liu, Michael McBreen, Anton Osinenko, Peter Pushkar, Gus Schrader, Shamil Shakirov, Nikita Sopenko, Shuai Wang, Yegor Zenkevich.

I would like to thank all my friends at Columbia and New York. Fellow graduate students, old friends and new friends, who made this such a fun and exciting time for me. The same can be said about my old friends in Moscow.

It gives me great pleasure to thank my family for their joy, endless support and never letting me alone.
 
\setcounter{page}{4}

\addcontentsline{toc}{chapter}{Acknowledgments}

\clearpage


\phantomsection
\addcontentsline{toc}{chapter}{Dedication}

\begin{center}

\vspace*{5\baselineskip}
\textbf{\large Dedication}
\end{center}

\begin{flushleft}
\hspace{10mm}
This dissertation is lovingly dedicated to my mother for her love, support and encouragement.
\end{flushleft}






\newpage

\clearpage
\pagenumbering{arabic}
\setcounter{page}{1} 

\phantomsection
\addcontentsline{toc}{chapter}{Introduction}

\begin{center}
\vspace*{5\baselineskip}
\textbf{\large Introduction}
\end{center}


\titleformat{\chapter}[display]
{\normalfont\bfseries\filcenter}{}{0pt}{\large\chaptertitlename\ \large\thechapter : \large\bfseries\filcenter{#1}}  
\titlespacing*{\chapter}
  {0pt}{0pt}{30pt}	
  
\titleformat{\section}{\normalfont\bfseries}{\thesection}{1em}{#1}

\titleformat{\subsection}{\normalfont}{\thesubsection}{0em}{\hspace{1em}#1}



\section{Introduction}

Geometric representation theory constructs representations of various quantum groups using geometry.
For a Nakajima quiver variety  $\sX$ there are three levels of cohomology theories, and in each of them  certain quantum groups act.
\begin{enumerate}
\item Rational level: Equivariant cohomology $\sH_\sT(\sX)$ has an action of a Yangian $Y(\g)$.
This level of theory has been brilliantly developed in \cite{MO}, and explicitly for the instanton moduli space in \cite{InstR}.
\item Trigonometric level: Equivariant K-theory $\sK_\sT(\sX)$ has an action of a quantum group $\cU_\hbar(\g)$, see \cite{pcmilect}.
\item Elliptic level: Equivariant elliptic cohomology $\sE_\sT(\sX)$ has an action of an elliptic quantum group.
\end{enumerate}

The action of the quantum group in each case can be  reconstructed from  the $R$-matrix by taking various matrix elements with respect to auxiliary spaces. The $R$-matrix plays a crucial role in the inverse scattering method and the algebraic Bethe ansatz discovered in \cite{FRT}, and developed in \cite{OkBethe}.
From geometric point of view, the $R$-matrix is the transition matrix between different {\it stable} bases for the generalized cohomology of $\sX$.

Stable envelopes turn out to be useful in many problems. In enumerative geometry one naturally considers different notions of curve counting such as Gromov-Witten-theory, Donaldson-Thomas theory and others, see \cite{Pandharipande}. In the context of representation theory some of the most interesting moduli spaces are the moduli spaces of quasimaps. They are both rich enough to capture interesting objects and concrete enough so that in practice the computations can be done in almost combinatorial terms. 

One of the most important functions in enumerative geometry is the one-point partition function with a nonsingular boundary condition, or {\it the  equivariant vertex}. It can be defined straightforwardly as a series in the K\"ahler parameters, whose coefficients have an interesting combinatorial meaning. In the case of the Hilbert scheme $\sX = \Hilb(\C^2)$, this series is related to the equivariant Donaldson-Thomas vertex, and in general depends on  "3 legs" as an element of  $K_\sT(\Hilb(\C^2))^{\otimes 3} $. It turns out that when only 2 out of 3 legs are non-trivial, there is a beautiful closed formula for the vertex, see \cite{KOO}.

\begin{figure}[!htb]
\begin{center}
\hspace*{-2mm}
\begin{tikzpicture}%
  [scale=0.5, z=(-135:0.82cm), x=(-15:1cm), y=(90:1cm),%
  baseline={([yshift=5mm]current bounding box.south)}]
\draw (1,1,0)--(1,1,10) (1,1,0)--(10,1,0) (1,1,0)--(1,7.5,0);
\foreach \m [count=\y] in {%
      {11,11,8,6,6,5,3,3,3,3},{7,7,7,6,5,4,3,1,1,1},{7,7,6,3,3,3},{3,2,2},{2,1},{2,1},{2,1},{2,1}%
  } {
  \foreach \n [count=\x] in \m {
  \ifnum \n>0
      \foreach \z in {1,...,\n}{
        \draw[fill=black!90] (\x+1,\y,\z)--(\x+1,\y+1,\z)--(\x+1,\y+1,\z-1)--(\x+1,\y,\z-1)--cycle;
        \draw[fill=white] (\x,\y+1,\z)--(\x+1,\y+1,\z)--(\x+1,\y+1,\z-1)--(\x,\y+1,\z-1)--cycle;
        \draw[fill=gray!60] (\x,\y,\z)--(\x+1,\y,\z)--(\x+1,\y+1,\z)--(\x,\y+1,\z)--cycle;  
      }
 \fi
 }
}
\draw[fill=red] (1,1,11)--(1,2,11)--(3,2,11)--(3,1,11)--cycle;
\draw (2,2,11)--(2,1,11);
\draw[fill=red] (11,1,0)--(11,3,0)--(11,3,1)--(11,2,1)--(11,2,3)--(11,1,3)--cycle;
\draw (11,2,0)--(11,2,1)--(11,1,1);
\draw (11,2,2)--(11,1,2);
\draw[fill=red] (1,9,0)--(3,9,0)--(3,9,1)--(2,9,1)--(2,9,2)--(1,9,2)--cycle;
\draw (2,9,0)--(2,9,1)--(1,9,1);
\node[color=red] at (1,1,12) {$\lambda_1$};
\node[color=red] at (12,1,0) {$\lambda_2$};
\node[color=red] at (2,11,1) {$\lambda_3$};
\end{tikzpicture}
\caption{The vertex function $\text{Vertex}(\lambda_1, \lambda_2, \lambda_3)$ can be computed by summation of weights for 3-dimensional infinite Young diagrams with asymptotic boundary conditions $\lambda_1, \lambda_2, \lambda_3$}
\end{center}
\end{figure}
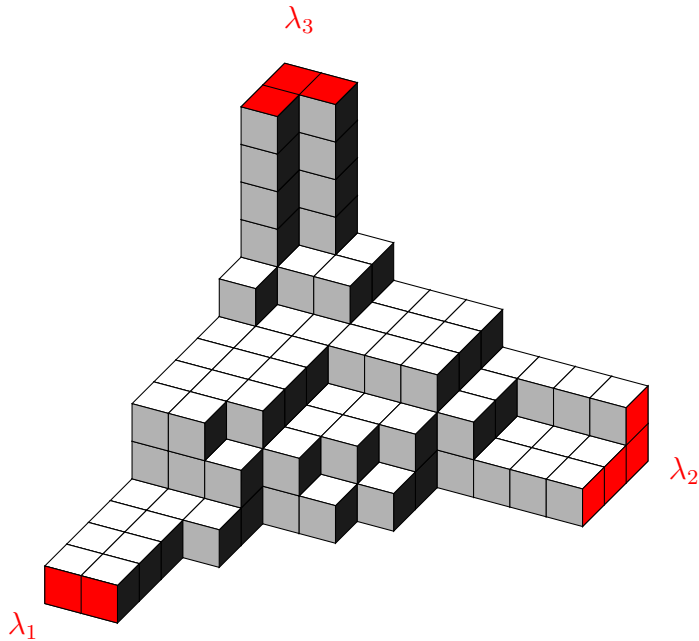

Another, more powerful, approach is to describe the equivariant vertex completely by identifying the difference equations it satisfies. There are two main types of K-theoretic difference operators: those shifting equivariant and those shifting K\"ahler variables. They both can be defined as certain two-point correlation functions. It turns out that our results about elliptic stable envelopes can be used to give  a geometric construction of quantum difference equations, as we show in joint work with Andrey Smirnov \cite{KononovSmirnov1, KononovSmirnov2}.

The results proven in this dissertation are  mathematical applications of {\it 3d mirror symmetry}, which is an important feature of $N=4$ three-dimensional supersymmetric quantum field theories. The $3d$ mirror symmetry for a $3d$ theory assigns a "mirror" theory which has the same correlation functions. One could say that the dual theories are the two "languages" to describe the same physics.

The low-energy behavior of such theories can be understood using the modulated vacua, and mathematically is governed by moduli spaces of maps to certain singular symplectic varieties.  In this approximation, $3d$ mirror symmetry relates the corresponding  moduli spaces of vacua. We expect that the enumerative and topological invariants of the $3d$ dual varieties are related in a nontrivial nonperturbative way. In chapter \ref{3dmir} we study consequences of this duality at the level of equivariant elliptic cohomology and K-theory.

In chapter \ref{qde} we give an introduction to quantum difference equations and shift operators, and outline how the results from chapter \ref{3dmir} can be used for the geometric construction of quantum difference equations. These ideas were originated by A.Okounkov in \cite{OkRodeIsland} and have continued to be developed by A.Smirnov and the author. We hope they will be published soon.

In chapter \ref{explicitexamples} we give explicit examples of the  computation of elliptic stable envelopes, their factorization, quantum difference equations and shift operators.

\section{Notation on spaces and characters}

For an algebraic torus $\sT$ over $\C$ we will use the character and cocharacter lattices
\[
cochar(\sT) = \Hom(\C^\times, \sT) \subset \Lie_\R(\sT),
\]
\[
char(\sT) = \Hom(\sT, \C^\times) \subset \Lie_\R(\sT)^\vee
\]
We denote the canonical pairing between characters and cocharacters by $\inner{-}{-}$:
\[
\Hom(\C^\times, \sT) \otimes \Hom(\sT, \C^\times) \to \Hom(\C^\times, \C^\times) \cong \Z.
\]

Let $V$ be a representation of some group $G$. The dual representation will be denoted by $V^\vee$, or, sometimes, when $G$ is a torus, as $\overline V$.
We define the symmetric algebra
\[
\mathsf{S}^\bullet V = \bigoplus_k (V^{\otimes k})^{S(k)},
\]
which is a $G$-representation with the  character
\[
\tr_{\mathsf{S}^\bullet V} = \exp \left(
\sum_{n\geq 1} \frac{1}{n} \tr_V g^n
\right).
\]
Sometimes $\mathsf{S}^\bullet$ is called the {\it plethystic exponential}.
It satisfies the relation
\[
\mathsf{S}^\bullet(V_1 \oplus V_2) = \mathsf{S}^\bullet V_1 \otimes \mathsf{S}^\bullet V_2,
\]
and thus  can be extended to K-theory. In particular,
$\mathsf{S}^\bullet (-V)$ is equal to
\[
\bigwedge\nolimits^\bullet V = \sum_{k\geq 0} (-1)^k \bigwedge\nolimits^k V
\]
We will also use a symmetrized version of the wedge power
\[
\ahat(V) = (-1)^{\dim V} (\det V)^{-1/2} \bigwedge\nolimits^\bullet V,
\]
or, explicitly, for a character
\[
\ahat\left(\sum m_i w_i\right) = \prod_i \left(\sqrt{w_i} - \frac{1}{\sqrt{w_i}}\right)^{m_i}
\]
Also we use a theta-analogue of the $\ahat$-operator, which can be written as
\[
\vartheta\left(\sum m_i w_i\right) = \prod_i \vartheta(w_i)^{m_i}
\]
for the classical odd theta-function $\vartheta(x)$. This notation should not be confused with the value of the theta function at the point $\sum m_i w_i$, since we use only multiplicative conventions on elliptic curves and theta-functions.

In the discussion of limits $q\to 0$ of elliptic cohomology classes, we will get expressions involving the floor function.
For any element $V \in K_{\sT}(pt)$
with a character
\[
V = \sum_i m_i w_i
\]
and a cocharacter $\sw \in cochar(\sT)$ define
\[
\intp{V\cdot \sw} = \sum_i m_i \intp{\inner{w_i}{\sw}}
\]

\section{Set-up}

Symplectic resolutions  can be viewed as "Lie algebras of the 21st century".
A {\it symplectic resolution} is a smooth algebraic symplectic variety $(\sX, \omega)$ such that the map
\[
\sX \to \sX_0 = \Spec H^0(\cO_\sX)
\]
is projective and birational.
It is called {\it equivariant} if there is a torus $\sT$ action on $\sX$ that scales the symplectic form $\omega$ with a character, which is denoted by $\hbar$, and contracts $\sX_0$ to a point. Given $\sX_0$, the resolution is controlled by a choice of a cohomology class
\[
\theta = [\omega] \in H^2(\sX, \R).
\]
Let $\sA = \ker \hbar$ be the subtorus preserving the symplectic form.
In enumerative geometry, we usually count curves with respect to their topological class, i.e. {\it degree} in $H^2$.
It is natural to consider the {\it K\"ahler torus} 
\[
\sK = \Pic(\sX) \otimes \C^\times = H^2(\sX, \C^\times).
\]
For brevity, we refer to coordinates in $\sA$ and $\sK$ as equivariant and K\"ahler parameters respectively.
It is natural to think of $\theta$ as an element
\[
\theta \in \Lie_\R (\sK).
\]

A polarization of $\sX$ is a choice of a virtual square root of the tangent bundle, that is a K-theoretic class $T^{1/2} \sX \in \sK_\sT(\sX)$
such that
\[
T\sX = T^{1/2} \sX + \hbar^{-1} \otimes \left(
T^{1/2} \sX
\right)^\vee.
\]
The opposite polarization is defined as
\[
T^{1/2}_{opp} \sX = T \sX - T^{1/2} \sX.
\]
If the variety  can be represented as $\sX = T^*M$ for some variety $M$, there is a natural polarization by the base $T^{1/2}\sX = TM$ directions.

For simplicity we will assume that the $\sA$-fixed points in $\sX$ are isolated.
The normal weights to fixed points are called {\it equivariant roots}. They partition $\Lie_\R(\sA)$ into finitely many {\it equivariant chambers}.

Choose a chamber $\fC \subset \Lie(\sA)$, and  pick $\sigma \in \fC$.
The attracting set of a point $p\in \sX^\sA$ is defined as
\[
\Attr_\sigma(p) = \{x\in \sX: \lim_{t\to \infty} e^{\sigma t} x = p\}.
\]
Near $p$, the set $\Attr_\sigma(p)$ is a smooth Lagrangian subvariety.

Let $\Attr^f_\sigma(p)$ be the full attracting set -- the smallest closed subset of $\sX$ containing $p$ and closed under taking $\Attr_\sigma$. It defines a partial ordering on fixed points by
\[
p_1 > p_2 \Longleftrightarrow p_2 \in \Attr_\sigma^f(p_1),
\]
which depends only on the chamber $\fC$, and does not depend on the particular choice of $\sigma$ within it.

For a fixed point $p \in \sX^\sA$ define its index to be the positive part of the polarization with respect to $\fC$:
\[
\ind_p = (N_p^{1/2})_{>0}.
\]

Each fixed point $p$ defines a map
\[
\chi_p(-, -): cochar(\sA) \times cochar(\sK) \to \Z.
\]
A cocharacter $\sw$ of $\sK$ corresponds to a line bundle $\L_\sw$, which has some weight $\lambda$.
The pairing is defined via
\[
\chi_p(\nu, \sw) = \inner{\nu}{\lambda}.
\]
We extend this pairing to real cocharacters and get a map
\[
\chi_p: \Lie_\R(\sA) \times \Lie_\R(\sK) \to \R.
\]

\section{K-theoretic stable envelopes}

K-theoretic stable envelopes for $\sX$ are   improved versions of the classes of attracting subvarieties $\Attr_\sigma(p)$, motivated by Schubert cells for various Grassmannians and flag varieties. They depend on a choice of an equivariant chamber $\fC$, a generic fractional line bundle $\sw \in \Pic(\sX) \otimes \R$ (called {\it the slope}) and a polarization $T^{1/2}$. The stable envelope is a correspondence
\[
\Stab^{\sX, K}_{\fC, \sw} \in \sK(\sX^\sA \times \sX)
\]
Restricting the first argument to a class of a fixed point $p \in \sX^\sA$, we obtain a class in $\sK(\sX)$ which is called the stable envelope of $p$ and is denoted $\Stab^{\sX, K}_{\fC, \sw}(p)$.

The stable envelope is characterized by the following properties:
\begin{enumerate}
\item $\supp \Stab \subset \Attr^f$,
which means that the matrix of the correspondence is upper-triangular with respect to the ordering defined by $\fC$.
\item Near the diagonal,
\[
\Stab^{\sX, K}_{\fC, \sw} = \cO_{\Attr^f} \otimes \text{line bundle}.
\]
More precisely,
\[
\Stab^{\sX, K}_{\fC, \sw}(p)|_p = \sqrt{\frac{\det N_p^-}{\det N_p^{1/2}}} \cdot \bigwedge\nolimits^\bullet(N_p^{-, \vee})
\]

The prefactor here is included here for the following reason. After multiplication by $\sqrt{\det N_p^-}$ we get a symmetrized product $\ahat(N_p^{-})$ instead of $\bigwedge\nolimits^\bullet(N_p^-)$, which contains less irrelevant information. The second determinant $\sqrt{\det N_p^{1/2}}$ is included to ensure $\Stab$ does not involve half-integral characters of $\sA$ (but involves $\sqrt{\hbar}$, though) since
\[
\sqrt{\frac{\det N_p^-}{\det N_p^{1/2}}} = \frac{\hbar^{-\rk \ind_p/2}}{\det \ind_p}
\]

\item For $p_2 < p_1$:
\[
\deg_\sA \Stab^{\sX, K}_{\fC, \sw}(p_1)|_{p_2} + \chi_{p_1}(-, \sw) \subset \deg_\sA \Stab^{\sX, K}_{\fC, \sw}(p_2)|_{\times p_2} + \chi_{p_2}(-, \sw)
\]
\end{enumerate}
The third condition means that the Newton polygon for the $\sA$-character of the off-diagonal components of $\Stab$ should be contained in the corresponding diagonal component after a certain shift by a fractional character given by the slope.

\section{Equivariant elliptic cohomology}

For a smooth algebraic variety $\sX$ with an action of an algebraic torus $\sT$, let $\Ell_\sT(\sX)$ be the $\sT$-equivariant elliptic cohomology scheme.
It is a scheme over
\[
\Ell_\sT(pt) = \sT/q^{cochar(\sT)} \cong E^{\dim \sT}, 
\]
where $E = \C^\times/q^\Z$ is an elliptic curve.

Elliptic cohomology classes are sections of various line bundles over $\Ell_\sT(\sX)$. The elliptic case is slightly different from the ordinary equivariant cohomology and K-theory, where we would have affine schemes, and thus the classes are functions on them.

There are several natural ways to obtain elliptic cohomology classes. 
A vector bundle $\V$ over $\sX$ produces a section of a line bundle
\[
\vartheta(\V) \to \Ell_\sT(\sX),
\]
giving a homomorphism
\[
\vartheta: \ \sK_\sT(\sX) \to \Pic(\Ell_\sT(\sX))
\]
called the equivariant Thom class. Sometimes, where it can not cause confusion,  we will use the same notations for line bundles and their sections.

In order to introduce K\"ahler variables, and consier them on equal footing with equivariant variables, we will use the extended elliptic cohomology scheme
\[
\sE_\sT(\sX) = \Ell_\sT(\sX)\times E_{\Pic(\sX)},
\]
where
\[
E_{\Pic(\sX)} = \Pic(X)\otimes E
\]
for the same family of elliptic curves $E = \C^\times/q^\Z$.

\subsection{}
There is a natural Poincar\'e bundle $\cU$ over the extended elliptic cohomology scheme.
The Chern class is a map
\[
c: \Pic_\sT(\sX) \to Maps(\Ell_\sT(\sX) \to E).
\]
which can be viewed as 
\[
\tilde c: \Ell_\sT(\sX) \to E_{\Pic_\sT(\sX)}^\vee.
\]
The bundle $\cU$ is the pullback of the tautological bundle (see \ref{tautological_line_bundle}) under the map
\[
\tilde c \times \id: \Ell_\sT(\sX) \times E_{\Pic_\sT(\sX)} \to E_{\Pic_\sT(\sX)}^\vee \times E_{\Pic_\sT(\sX)}
\]

\subsection{} \label{universaldefined}
We will be focused on varieties with isolated fixed points. An elliptic cohomology class is uniquely defined by its restrictions to fixed points which are sections of bundles over the base variety $\sE_\sT(\sX)$. The equivariant coordinates we denote by $a$ and the K\"ahler coordinates by $z$ for simplicity.

In simple English, if the coordinate $z_i$ corresponds to a line bundle $\L_i$, the fixed point component of the bundle $\cU$ is
\[
\prod_i \frac{\vartheta(a^{\lambda_i} z_i)}{\vartheta(a^{\lambda_i})\vartheta(z_i)},
\]
where $\lambda_i$ is the weight of the bundle $\L_i$ at the fixed point.

\chapter{Elliptic stable envelopes}

\section{Elliptic curves and theta-functions}

Consider an elliptic curve
\[
E = \C^\times/q^\Z, \ \ q \in \C^\times.
\]
We will use a multiplicative coordinate $x$ on $\C^\times$.
The classical odd Jacobi theta-function is defined as
\[
\vartheta(x) = (x^{1/2} - x^{-1/2}) \prod_{i\geq 1} (1-q^i x)(1-q^i/x).
\]
It is a regularized version of a natural $q$-periodic product
\[
\prod_{i \in \Z} (1-q^i x).
\]
Because of the regularization, theta-function is not a function on $E$, but a holomorphic section of a line bundle of degree 1, with a  unique simple zero at $x=1$. In other words, the theta-function is a section of a trivial bundle over $\C^\times$ with automorphy factors (quasiperiods) corresponding to the difference equation
\[
\vartheta(qx) = -\frac{1}{\sqrt{q} x} \vartheta(x).
\]
Iterating it, we get
\[
\vartheta(q^n x) = (-1)^n x^{-n} q^{-n^2/2} \vartheta(x).
\]
Note that, strictly speaking, $\vartheta(x)$ is a double-valued section, since
\[
\vartheta(e^{2\pi i} x) = -\vartheta(x).
\]
It has also an expansion which will be useful for understanding behavior in various limits.
\begin{equation} \label{thetaadditive}
\vartheta(x) = \left(
\prod_{i\geq 1} (1-q^i)
\right)^{-1} \left(
\sum_{n \in \Z} (-1)^n  x^{n+1/2}q^{n(n+1)/2}
\right).
\end{equation}
For example, consider the limit
\[
\lim_{q\to 0} \vartheta(xq^\sw), \ \ \sw \in \R.
\]
In the expansion \ref{thetaadditive}, for integer $\sw$, two terms with $n = -\sw-1$ and $n = -\sw$ will be dominating, while for other $\sw$ only a single term $n = -\lfloor \sw \rfloor - 1$ will be dominating. Thus,
\begin{equation} \label{thetaasympt}
\vartheta(xq^\sw) \sim \begin{cases}
(-1)^{\intp{\sw}+1} q^{\mathbb{q}(\sw)} x^{-\lfloor \sw \rfloor - 1/2}, & \sw \not \in \Z, \\
(-1)^{\intp{\sw}+1} q^{\mathbb{q}(\sw)} x^{-\sw-1/2} (1- x), & \sw \in \Z.
\end{cases}
\end{equation}
where
\[
\mathbb{q}(\sw) = \frac{\intp{\sw}(\intp{\sw}+1)}{2} - \sw\left(
\intp{\sw}+\frac{1}{2}
\right)
\]
is a piecewise linear function satisfying the differential equation
\[
\frac{d^2}{d\sw^2}\mathbb{q}''(\sw) + \sum_{n\in \Z} \delta(\sw-n) = 0.
\]
As a consequence, we have
\[
\lim_{q\to 0} \frac{\vartheta(z a q^\sw)}{\vartheta(aq^\sw)} = \begin{cases}
z^{-\intp{\sw}-1/2}, & \sw \not \in \Z\\
z^{-\sw - 1/2} \cdot \frac{1-az}{1-a}, & \sw \in \Z.
\end{cases}
\]

There is a wonderful book on theta-functions \cite{mumf}. Up to some function of $q$, our theta-function is equal to $\vartheta_{11}(z|\tau)$, where
\[
z = e^{2\pi i z}, \ \ q = e^{2 \pi i \tau}.
\]

\section{Line bundles over abelian varieties}

Fix an elliptic curve
\[
E = \C^\times/q^\Z
\]
and consider line bundles over the abelian variety
\[
\cA = E_{a_1} \times ... \times E_{a_n}.
\]
In other words, we have an algebraic torus
\[
\sA \cong (\C^\times)^n
\]
with multiplicative coordinates $a_1, ..., a_n$ and the abelian variety can be regarded as
\[
\cA = \sA/q^{cochar(\sA)}
\]
The set $\Pic(\cA)$ of line bundles over $\cA$ is a disconnected algebraic variety, and let $\Pic_0(\cA)$ be its connected component containing the trivial bundle $\cO \to \cA$.
There is an exact sequence
\[
0 \to \Pic_0(\cA) \to \Pic(\cA) \to \NS(\cA) \to 0,
\]
and $\NS(\cA)$ is called the N\'eron-Severi group. We would like to describe elements of $\NS(\cA)$ explicitly.

\subsection{Tautological line bundle}\label{tautological_line_bundle}

The set $\Pic_0(\cA)$ is known to be the dual abelian variety to $\cA$. Let  $\sA^\vee$ be the dual torus with coordinates $\{z_i\}$ dual to $\{a_i\}$. On the product 
\[
\cA \times \cA^\vee = \sA/q^{cochar(\sA)} \times \sA^\vee/q^{cochar(\sA^\vee)}
\]
there is a line bundle, called the Poincar\'e bundle, with a section
\[
\prod_i \frac{\vartheta(a_iz_i)}{\vartheta(a_i) \vartheta(z_i)}
\]
such that points $(z_1,...,z_n)$ bijectively parametrize all line bundles in $\Pic_0(\cA)$. 

\subsection{}

On the set of sections there is a natural  action of the group of cocharacters:
\[
s(a_1, ..., a_n) \to s(a_1 q^{\sw_1}, ..., a_n q^{\sw_n}), \ \ \sw_i \in \Z.
\]
Explicitly, any rational section can be written as
\[
s(a) = \prod_i \vartheta(a^{\lambda_i} c_i)^{m_i},
\]
for some characters $\lambda_i \in char(\sA)$ and shifts $c_i \in E$.
It transforms as
\begin{equation} \label{ashift}
s(aq^\sw) = \prod_i \vartheta(q^{\inner{\lambda_i}{\sw}} a^{\lambda_i} c_i) = \prod_i (a^{\lambda_i} c_i)^{-m_i \inner{\lambda_i}{\sw}} q^{-m_i \inner{\lambda_i}{\sw}^2/2} \vartheta(a^{\lambda_i} c_i).
\end{equation}
The exponent of $q$ encodes the class of the bundle in $\NS(\cA)$ and is called the degree. It is a quadratic function on the cocharacter lattice:
\[
\deg s \in S^2 char(\sA): cochar(\sA) \to \Z,
\]
\[
\deg(s): \sw \mapsto -\sum_i m_i\frac{\inner{\lambda_i}{\sw}^2}{2}.
\]

\begin{example}
For example, consider a section
\[
s = \frac{\theta(a^2 z)\theta(a)}{\theta(a^2)\theta(az)}
\]
over $E_a \times E_z$.
Its degree is
\[
\deg s = -(2a+z)^2 -a^2 + (2a)^2 + (a+z)^2 = -2az,
\]
which is the same as for the Poincar\'e bundle
\[
\frac{\theta(az)}{\theta(a)\theta(z)}
\]
\end{example}

The following proposition is straightforward from \ref{ashift}.
\begin{proposition}
For any $\sw \in cochar(\sA)\otimes \R$ there is $\alpha \in \R$ such that the following limit exists
\[
\lim_{q\to 0} q^{\alpha} s(aq^\sw).
\]
\end{proposition}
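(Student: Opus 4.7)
The plan is to reduce the statement to the single-theta asymptotic already computed in equation \ref{thetaasympt}. First, since $s$ is a rational section over $\cA$, I would write it in the product form
\[
s(a) = \prod_i \vartheta(a^{\lambda_i} c_i)^{m_i}
\]
given earlier, so that
\[
s(aq^\sw) = \prod_i \vartheta\!\left(a^{\lambda_i} c_i\, q^{\inner{\lambda_i}{\sw}}\right)^{m_i}.
\]
Each exponent $\inner{\lambda_i}{\sw}$ is a real number depending linearly on $\sw$, so the problem decouples into a finite product of single-variable limits of the type studied in \ref{thetaasympt}.

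Next, I would apply the asymptotic formula \ref{thetaasympt} factor by factor. Each factor contributes a leading $q$-power $q^{m_i\,\mathbb{q}(\inner{\lambda_i}{\sw})}$ together with a bounded (and generically nonzero) limit in the multiplicative variable $a^{\lambda_i} c_i$. Summing the exponents yields the natural candidate
\[
\alpha \;=\; -\sum_i m_i\, \mathbb{q}\!\left(\inner{\lambda_i}{\sw}\right),
\]
which is a well-defined real number because $\mathbb{q}$ is a piecewise-quadratic function on $\R$. With this choice of $\alpha$, the prefactor $q^\alpha$ exactly cancels the aggregate $q$-weight of the leading terms, and the limit
\[
\lim_{q\to 0} q^\alpha\, s(aq^\sw) \;=\; \prod_i \Bigl[(-1)^{\intp{\inner{\lambda_i}{\sw}}+1}\Bigr]^{m_i} \prod_i F_i\!\left(a^{\lambda_i}c_i\right)^{m_i}
\]
exists, where $F_i(x) = x^{-\intp{\inner{\lambda_i}{\sw}}-1/2}$ if $\inner{\lambda_i}{\sw}\notin \Z$ and $F_i(x) = x^{-\inner{\lambda_i}{\sw}-1/2}(1-x)$ otherwise.

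The only potentially subtle point is justifying that applying \ref{thetaasympt} factor-wise is legitimate: it is, because only finitely many theta-factors appear and the asymptotic is of the form ``$\vartheta(x q^\sw) = q^{\mathbb{q}(\sw)}\bigl(L(x) + o(1)\bigr)$'' uniformly on compact sets of $x\in \C^\times$ avoiding the poles/zeros, so products of such expansions may be multiplied term-by-term. Thus no new idea beyond \ref{thetaasympt} and formula \ref{ashift} is needed, and the proposition follows at once; I expect the written proof to be one line referring to those two formulas, with $\alpha$ chosen as above.
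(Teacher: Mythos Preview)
Your proof is correct and follows exactly the approach the paper intends: the paper's own proof is the single remark ``straightforward from \ref{ashift}'' together with the comment that for non-integral $\sw$ the exponent involves floor functions, and you have simply written out those details using the single-factor asymptotic \ref{thetaasympt}. One small slip: $\mathbb{q}$ is piecewise \emph{linear}, not piecewise quadratic (on each interval $[n,n+1)$ the floor is constant), but this does not affect the argument.
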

For integer $\sw$ we can take $\alpha = -\inner{\deg s}{\sw \otimes \sw}$; for other $\sw$ the expression is more complicated involving floor functions.


\section{Limit from elliptic cohomology to K-theory}

In the limit $q\to 0$, the elliptic curve $E = \C^{\times}/q^\Z$ degenerates to $\C^\times$, and sections of line bundles over $E$ to certain multi-valued functions on $\C^\times$; for example
\[
\vartheta(x)  \to \ahat(x) = x^{1/2} - x^{-1/2}.
\]
Equivariant elliptic cohomology classes, which are sections of line bundles over $\Ell_\sT(\sX)$, degenerate to sections of line bundles over $\sK_\sT(\sX)$, which are double-valued functions. After restricting to fixed points, they become Laurent polynomials with, possibly, half-integer exponents.

We will be focused on varieties with isolated fixed points, where the computations can be done by localization. For each point we take the limit of the corresponding restriction of an elliptic class.

\section{Balanced sections}

Recall the abelian variety is 
\[
\cA = \sA/q^{cochar(\sA)}.
\]
We say that a section $s(a)$ is balanced (in variables $a$) if
for any fractional cocharacter $\sw \in cochar(\sA)\otimes \R$
in the limit $q\to 0$, the section $s(aq^\sw)$ has constant asymptotics:
\[
s(a q^\sw) \sim O(q^0).
\]
A necessary condition for being balanced is that its degree is linear. However, it is not sufficient, since it guarantees the constant asymptotics property only for integral cocharacters. A section of  degree 0 is called {\it numerically balanced}.

\begin{example}
The section
\[
\frac{\vartheta(a_1 a_2)}{\vartheta(a_1) \vartheta(a_2)}
\]
 is not balanced over $E_{a_1} \times  E_{a_2}$, but for any fixed $a_2$ it is balanced over $E_{a_1}$, and vice versa.
\end{example}
\begin{example} Consider a section
\[
s(a) = \frac{\vartheta(a)^4}{\vartheta(a^2)}.
\] 
The asymptotical behavior of $s(aq^\sw)$ can be easily computed using \ref{thetaasympt}
\[
s(aq^\sw) = \frac{\vartheta(aq^\sw)^4}{\vartheta(a^2 q^{2\sw})} \sim q^{4\qq(\sw) - \qq(2\sw)}.
\]
The graph of the function $4\qq(\sw) - \qq(2\sw)$ is shown on the picture.
\begin{figure}[!htb]
\begin{center}
\begin{tikzpicture} [scale=2]
\draw [->] (-2.5,0) -- (2.5,0) node [anchor=west] {$\sw$};
\draw [->] (0,-1) -- (0,0.5)  node [anchor=south west] {$4\qq(\sw)-\qq(2\sw)$};
\draw [thick] (-2,0) -- (-1.5,-0.5) -- (-1,0) -- (-0.5,-0.5) -- (0,0) -- (0.5,-0.5) -- (1,0) -- (1.5,-0.5) -- (2,0) ;
\foreach \i in {-2,-1,1,2} {
\node [anchor=south] at (\i,0)  {$\i$};
\draw [fill=black] (\i,0) circle(0.02);
}
\node [anchor=south west] at (0,0)  {$0$};
\draw [fill=black] (0,0) circle(0.02);
\draw [fill=black] (0,-0.5) circle(0.02) node [anchor=north west] {$-\frac{1}{2}$};
\end{tikzpicture}
\end{center}
\caption{The graph of the function $4\qq(\sw) - \qq(2\sw)$ showing the order if $s(a q^\sw)$ at $q \to 0$.}
\end{figure}
We see that for all $\sw \not \in \Z$ the limit is divergent. This is an example of a section which is numerically balanced but not balanced.
\end{example}

There is a simple description of balanced sections.
\begin{proposition}
A section $s(a,z)$ is balanced in $a$ if and only if it can be written as
\[
s(a,z) = \sum \prod_i \frac{\vartheta(a^{\lambda_i} ... )}{\vartheta(a^{\lambda_i} ... )}
\]
The right hand side means an expression which can be written as a polynomial of
\[
\frac{\vartheta(a^\lambda c)}{\vartheta(a^\lambda \overline c)}
\]
for all characters $\lambda \in char(\sA)$ and $c,\overline c$ not depending on $\sA$.
\end{proposition}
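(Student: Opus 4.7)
The plan is to verify the two implications of the biconditional separately, using the $q\to 0$ asymptotic formula \ref{thetaasympt} as the workhorse.

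For the ``if'' direction, I would take any ratio $r(a) = \vartheta(a^\lambda c)/\vartheta(a^\lambda \bar c)$ and apply the substitution $a \mapsto aq^\sw$. Each theta factor becomes $\vartheta(a^\lambda c q^{\inner{\lambda}{\sw}})$, whose leading $q$-behavior is $q^{\qq(\inner{\lambda}{\sw})}$ times a (double-valued) Laurent monomial in $a^\lambda$. Since numerator and denominator share the same character $\lambda$, the $q$-exponents agree and cancel in the ratio, so $r(aq^\sw) = O(q^0)$ uniformly in $\sw$. The class of balanced sections is visibly closed under products (exponents add to zero) and under sums (the sum of bounded quantities is bounded), so every polynomial in such ratios is balanced.

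For the converse, I would first represent a rational balanced section in factored form $s(a,z) = \prod_i \vartheta(a^{\lambda_i} c_i(z))^{m_i}$, with $\lambda_i \in char(\sA)$ and the shifts $c_i(z)$ independent of $a$. Applying \ref{thetaasympt} for each $\sw$ gives a leading $q$-exponent equal to the piecewise-linear function $\Phi(\sw) = \sum_i m_i \qq(\inner{\lambda_i}{\sw})$. Being balanced forces $\Phi \equiv 0$ on $\Lie_\R(\sA)$. Because $\qq''(u) = -\sum_{n\in\Z}\delta(u-n)$, the distributional Hessian of $\qq(\inner{\lambda}{\cdot})$ is supported on the hyperplane arrangement $\{\inner{\lambda}{\sw} \in \Z\}$ with transverse weight $\lambda \otimes \lambda$. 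Grouping terms by the ray spanned by $\lambda_i$ and comparing the supports of these distributions separates the contributions of distinct characters, so the condition $\Phi \equiv 0$ forces $\sum_{i\,:\,\lambda_i = \lambda} m_i = 0$ for every $\lambda$ appearing in $s$. Within each character class one then pairs positive and negative exponents to rewrite the corresponding factor as a product of ratios $\vartheta(a^\lambda c_i(z))/\vartheta(a^\lambda c_j(z))$ of the required form.

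The main obstacle is the distributional linear independence step: I need to disentangle the contributions of distinct but possibly proportional characters (for instance $\lambda$ and $k\lambda$), whose kink loci are nested rational scalings of one another. A careful inductive argument by the primitive character direction, starting with the finest lattice of kinks and peeling off layers, should handle this. A second, orthogonal subtlety is that $s$ is permitted to be a sum rather than a single theta monomial, so that individually unbalanced monomials could a priori cancel. I would handle this by clearing denominators over the field of rational functions in $z$ and expanding $s$ in the ring generated by the $\vartheta(a^{\lambda_i} c_i(z))$, where algebraic independence of generic theta values lets one apply the piecewise-linear argument coefficient by coefficient.
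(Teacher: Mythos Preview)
Your ``if'' direction matches the paper. For the converse, you and the paper take genuinely different routes. You analyze the piecewise-linear $q$-exponent $\Phi(\sw) = \sum_i m_i\,\qq(\inner{\lambda_i}{\sw})$ globally, argue $\Phi\equiv 0$, and propose to recover the per-character multiplicity constraints $\sum_{\lambda_i=\lambda} m_i = 0$ from the distributional Hessian, with an induction over primitive directions to separate nested kink loci coming from proportional characters. The paper instead runs a minimal-counterexample argument: among factored sections not of the required shape, take one with the fewest factors, locate a weight $\lambda_1$ none of whose other integer multiples occur among the $\lambda_i$, and then pick $\sw$ with $\inner{\lambda_1}{\sw}\in\Z$ but $\inner{\lambda_i}{\sw}\notin\Z$ for the remaining $i$; the resulting kink of $\Phi$ at $\sw$ contradicts balancedness. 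This is effectively the one-step, local version of the inductive peeling you outline, and it sidesteps the distributional formalism entirely. Your approach has the advantage of yielding the full multiplicity statement directly and transparently; the paper's is shorter but relies on the existence of such a $\lambda_1$, which it does not justify. On the reduction from general sections to factored ones, the paper simply asserts ``it is enough to consider factorized sections'' without argument, so your acknowledgment of this as a genuine subtlety (and your sketch via algebraic independence of theta values) already goes beyond what the paper provides.
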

\begin{proof}
It is enough to consider factorized sections
\begin{equation} \label{section}
\prod_i \vartheta(a^{\lambda_i}c_i)^{m_i},
\end{equation}
We can assume it is the section of the smallest number of factors, which cannot be represented as in the proposition. It follows that there exists a cocharacter $\sw \in cochar(\sA) \otimes \Q$ so that the asymptotics for $\sw \pm \eps$ are different. Indeed, let $\lambda_1$ be the weight in \ref{section} such that other integer multiples $\{n \lambda_1: n \in \Z\}$ are not present. Then there we can choose $\sw$ such that $\inner{\lambda_1}{\sw} \in \Z$, and all others $\inner{\lambda_i}{\sw} \not \in \Z$.
\end{proof}

\section{Limits}

Now assume that we have two groups of variables, which means that the abelian variety is
\[
\cA = \sA/q^{cochar(\sA)} \times \sK/q^{cochar(\sK)}.
\]
for  tori $\sA$ and $\sK$. Coordinates on $\sA$ are denoted by $a$ and they will later be equivariant variables. Coordinates on $\sK$, denoted by $z$, will be K\"ahler variables later.

Automorphy factors of each section $s(a,z)$ define a pairing
\[
\chi: cochar(\sA) \times cochar(\sK) \to Z,
\]
such that
\[
s(aq^\sw, z) = q^\bigstar a^\bigstar z^{\chi(\sw, -)} s(a, z)
\]
and
\[
z(a, zq^\sv) = q^\bigstar z^\bigstar a^{\chi(-, \sv)} s(a, z)
\]
for any (integer!) cocharacters $\sw \in cochar(\sA)$, $\sv \in cochar(\sK)$.

If the section $s(a,z)$ is numerically balanced over $E_a$ and $E_z$, we do not have monomial factors with $\bigstar$.

The pairing $\chi(-, -)$ can be recovered from the cross-part of the degree:
\[
\deg s \subset S^2 \left(
char(\sA) \oplus char(\sK)
\right) \to  char(\sA) \otimes char(\sK).
\]
We are interested is the limit  when $q \to 0$ after a shift of $a$ by a fractional cocharacter.
\[
\sw \in cochar(\sA) \otimes \R \cong \Lie_\R(\sA)
\]
The following proposition is straightforward:
\begin{proposition}
For a section $s(a,z)$ and $\sw \in cochar(\sA) \otimes \R$ let $\alpha \in \R$ be such that the limit
\[
\lim_{q\to 0} q^\alpha s(a q^\sw, z) 
\]
exists. Then
\[
\lim_{q\to 0} q^\alpha s(a q^\sw, z) \in \C(a^{1/2}, z^{1/2}).
\]
\end{proposition}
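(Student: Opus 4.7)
The plan is to reduce the general section to a product of theta-factors and then apply the asymptotic formula \eqref{thetaasympt} term by term. Any meromorphic section of a line bundle on the abelian variety $\cA \times \sK/q^{cochar(\sK)}$ can be written as a ratio of products of the form
\[
s(a,z) = \prod_i \vartheta(a^{\lambda_i} z^{\mu_i} c_i)^{m_i},
\]
where $\lambda_i \in char(\sA)$, $\mu_i \in char(\sK)$, $c_i \in \C^\times$ and $m_i \in \Z$. By linearity and multiplicativity of the limit, it suffices to treat one such factorized product; a general section which is a finite sum of such products is handled at the end by noting that only the terms of maximal $q$-order survive the limit, and the sum of rational functions is rational.

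Next, I would substitute $a \mapsto a q^\sw$ into each factor. The argument of the $i$-th theta becomes $a^{\lambda_i} z^{\mu_i} c_i \cdot q^{\kappa_i}$ with $\kappa_i := \inner{\lambda_i}{\sw} \in \R$. The asymptotic formula \eqref{thetaasympt} then gives
\[
\vartheta(a^{\lambda_i} z^{\mu_i} c_i q^{\kappa_i}) \sim (-1)^{\intp{\kappa_i}+1}\, q^{\qq(\kappa_i)}\, (a^{\lambda_i} z^{\mu_i} c_i)^{-\intp{\kappa_i}-1/2}\, f_i(a,z),
\]
where $f_i(a,z) = 1$ when $\kappa_i \notin \Z$ and $f_i(a,z) = 1 - a^{\lambda_i} z^{\mu_i} c_i$ when $\kappa_i \in \Z$. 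In either case the non-$q$ part is a Laurent monomial in $a^{1/2}, z^{1/2}$ times (possibly) a polynomial factor $(1 - a^{\lambda_i}z^{\mu_i} c_i)$, and hence lies in $\C(a^{1/2}, z^{1/2})$.

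Multiplying these asymptotics over $i$ with multiplicities $m_i$, the total $q$-order of $s(aq^\sw, z)$ is $-\alpha = \sum_i m_i\, \qq(\kappa_i)$. The hypothesis that $\lim_{q\to 0} q^\alpha s(aq^\sw, z)$ exists pins down $\alpha$ to this value (otherwise the limit is either zero or infinite), and the limit itself equals the product of the leading non-$q$ coefficients,
\[
\lim_{q\to 0} q^\alpha s(aq^\sw, z) = \prod_i \left[(-1)^{\intp{\kappa_i}+1} (a^{\lambda_i} z^{\mu_i} c_i)^{-\intp{\kappa_i}-1/2} f_i(a,z)\right]^{m_i}.
\]
This is manifestly an element of $\C(a^{1/2}, z^{1/2})$, as claimed.

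The only place that needs a bit of care is the mixed situation in which some $\kappa_i$ are integral and others are not, because the two branches of \eqref{thetaasympt} look different; but both branches produce objects in $\C(a^{1/2}, z^{1/2})$, so the combined product stays in that field. For a section written as a sum rather than a single product, one picks out the terms with the smallest $-\alpha$ (largest asymptotic order), each of which contributes a summand in $\C(a^{1/2}, z^{1/2})$; the remaining terms vanish in the limit. The main bookkeeping obstacle is simply tracking the half-integer exponents coming from $\sqrt{a}, \sqrt{z}$ and the signs $(-1)^{\intp{\kappa_i}+1}$, none of which affects the rationality conclusion.
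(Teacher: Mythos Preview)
Your proof is correct and matches the paper's approach: the paper does not actually give a proof but simply declares the proposition ``straightforward,'' relying implicitly on the theta asymptotics \eqref{thetaasympt} applied to the factorized form $s(a,z)=\prod_i \vartheta(a^{\lambda_i} z^{\mu_i} c_i)^{m_i}$ introduced just before. You have written out precisely those details.

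One minor remark: the paper treats rational sections as single products of theta-factors (see the sentence ``Explicitly, any rational section can be written as $s(a)=\prod_i\vartheta(a^{\lambda_i}c_i)^{m_i}$''), so your extra paragraph about sums of products is more general than needed here, though it does no harm.
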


Let us discuss the behavior of the limit as a function of $z$, when $z$ goes to any infinity of $\sK$. We have to restrict ourselves to the sections of a very special kind, which will suffice for our applications.

\begin{proposition}
Let the section $s(a,z)$ be balanced over $E_a$ and over $E_z$, and suppose that it can be represented as
\[
s(a,z) = \sum_i f_i(a) g_i(z) \prod_k \frac{\vartheta(a^{\lambda_{i,k}}z^{\mu_{i,k}} c_{i,k})}{\vartheta(a^{\lambda_{i,k}} \overline c_{i,k})\vartheta(z^{\mu_{i,k}} \overline{\overline{ c_{i,k}}})}
\]
for some characters $\lambda_i \in char(\sA), \mu_i \in char(\sK)$.
Let $\sw \in cochar(\sA)\otimes \R$, $\sv \in cochar(\sK) \otimes \R$.
Then the following limits are finite:
\[
\lim_{z \to 0} z^{-\chi(\sw, -)} \lim_{q\to 0} s(a q^\sw, z), \ \ \lim_{a \to 0} a^{-\chi(-, \sv)} \lim_{q\to 0}  s(a, zq^\sv).
\]
The limit $z \to 0$ means that $z$ goes to any infinity of the torus $\sK$.
\end{proposition}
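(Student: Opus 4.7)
The two claims are symmetric under $(a,\sw) \leftrightarrow (z,\sv)$, so I sketch only the first. The plan is to compute the inner limit summand-by-summand using \eqref{thetaasympt}, identify the exponent $\chi(\sw,-)$ as the resulting leading $z$-monomial, and then deduce finiteness of the outer limit from balance of the full sum in $z$.

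By linearity it suffices to analyze a single summand
\[
F(a,z) = f(a)\,g(z)\prod_k \frac{\vartheta(a^{\lambda_k} z^{\mu_k} c_k)}{\vartheta(a^{\lambda_k}\bar c_k)\,\vartheta(z^{\mu_k}\bar{\bar c}_k)},
\]
remembering that only $s = \sum_i F_i$ need be balanced in $z$. After substituting $a \mapsto aq^\sw$, each ratio $\vartheta(a^{\lambda_k} z^{\mu_k} c_k q^{n_k})/\vartheta(a^{\lambda_k}\bar c_k q^{n_k})$, with $n_k = \inner{\lambda_k}{\sw}$, has matching $q$-orders by \eqref{thetaasympt}; its limit equals $(z^{\mu_k} c_k/\bar c_k)^{-\intp{n_k}-1/2}$ when $n_k\notin\Z$ and $(z^{\mu_k})^{-n_k}(1 - a^{\lambda_k} z^{\mu_k} c_k)/(1 - a^{\lambda_k}\bar c_k)$ up to a constant when $n_k\in\Z$. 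The denominators $\vartheta(z^{\mu_k}\bar{\bar c}_k)$ degenerate to $\ahat(z^{\mu_k}\bar{\bar c}_k)$, while $f(a), g(z)$ are unchanged. Collecting the $z$-monomials across factors, the leading $z$-exponent is exactly $\chi(\sw,-)$: for integer $\sw$ this is forced by the automorphy relation $s(aq^\sw, z) = q^\bigstar a^\bigstar z^{\chi(\sw,-)} s(a,z)$, and for fractional $\sw$ it is its piecewise-linear extension read directly off \eqref{thetaasympt}.

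After dividing by $z^{\chi(\sw,-)}$, the $z$-dependence of the resulting expression is confined to $\ahat$-factors, to linear pieces $1 - a^{\lambda_k} z^{\mu_k} c_k$ from the integer cases, and to $g(z)$. Individual summands can diverge along some cocharacter directions of $\sK$, so the outer limit must use the balance of the full sum $s$ in $z$. The crucial observation is that this balance is inherited by the renormalized inner limit: the shifts $z \mapsto z q_K^{\sv}$ commute with $a \mapsto a q^\sw$, and the only $q_K$-scaling they introduce is precisely the one carried by the monomial $z^{\chi(\sw,-)}$ that has already been stripped off. Thus the remaining sum is balanced in $z$ in the sense of the preceding proposition, hence its limits at every infinity of $\sK$ are finite. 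The main obstacle is the bookkeeping step: matching the piecewise-linear exponents produced by the floor functions in \eqref{thetaasympt} with the cross part of the quadratic degree form on $char(\sA)\oplus char(\sK)$, and verifying that the cancellations responsible for balance of $\sum_i F_i$ in $z$ propagate intact through the $q \to 0$ limit in $a$.
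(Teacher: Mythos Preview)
Your approach diverges from the paper's at the crucial outer-limit step, and the divergence contains a genuine gap.

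The paper does not appeal to balance of the full sum in $z$ at all. It shows that \emph{each individual Poincar\'e-type factor} is bounded after the appropriate $z$-twist. For the prototype $s(a,z)=\vartheta(az)/(\vartheta(a)\vartheta(z))$ with non-integral $\sw$, one computes
\[
s(aq^\sw,z)\ \sim\ \frac{(az)^{-\lfloor\sw\rfloor-1/2}}{a^{-\lfloor\sw\rfloor-1/2}\,(z^{1/2}-z^{-1/2})},
\]
and after multiplying by $z^{\sw}=z^{-\chi(\sw,-)}$ the surviving $z$-exponent is $\sw-\lfloor\sw\rfloor-\tfrac12\in[-\tfrac12,\tfrac12]$, which exactly balances the $z^{\pm 1/2}$ coming from the $\ahat$-factor in the denominator. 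This inequality is the whole point, and your write-up never isolates it. The general case follows by splitting $z^{\chi(\sw,-)}=\prod_k z^{\mu_k\langle\lambda_k,\sw\rangle}$ and applying this bound factor-by-factor. In particular, your sentence ``Individual summands can diverge along some cocharacter directions of $\sK$'' is false for summands of the stipulated form.

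Your substitute argument---that the $q\to0$ limit ``inherits'' balance in $z$---is not rigorous as written. Balance is a statement about $q$-asymptotics of a theta section; once $q\to0$ has been taken you are left with a rational function of $z$, and there is no $q$ left for ``balance in $z$'' to refer to. Introducing a second modular parameter $q_K$ does not help: the original section was balanced for the \emph{same} $q$ in both variables, and there is no a priori reason the rational limit should extend to a balanced section for a new elliptic parameter. What you would actually need is a direct degree bound on the limiting rational function, and that is exactly the $[-\tfrac12,\tfrac12]$ computation you skipped.
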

\begin{proof}
Let us prove the statement about the $z$-limit. It is enough to prove the statement for a factorized section.
Let us start with the function
\[
s(a,z) = \frac{\vartheta(az)}{\vartheta(a)\vartheta(z)}.
\]
For integral shifts $\sw$ the statement is clear. For non-integer $\sw$, a simple computation shows that
\[
s(aq^\sw, z) \sim \frac{(az)^{-\intp\sw - 1/2}}{(a)^{-\intp\sw - 1/2}(z^{1/2}-z^{-1/2})}.
\]
The twist by $z^{-\chi(\sw, -)}$ is just multiplication by $z^\sw$. The resulting function is bounded for $z\to 0$ and $z\to \infty$, since
\[
-\frac{1}{2}\leq \sw - \intp{\sw} - \frac{1}{2} \leq \frac{1}{2}.
\]

In general, for the function
\[
s(a,z) = \sum_i f_i(a) g_i(z)  \frac{\vartheta(a^{\lambda_i}z^{\mu_i} c_i)}{\vartheta(a^{\lambda_i} \overline c_i)\vartheta(z^{\mu_i} \overline{\overline{ c_i}})},
\]
we get
\[
z^{\chi(\sw, -)} = \prod_i z^{\mu_i \inner{\lambda_i}{\mu_i}}.
\]
Twisting each factor  $i$-th by $z^{\mu_i \inner{\lambda_i}{\mu_i}}$,
we reduce to the case already considered.

\end{proof}

\begin{remark}
The bounding of the $z$-degree in the proof is similar to the window condition in the K-theoretic stable envelope. This observation will be developed in the context of 3d mirror symmetry, where $z$ plays a role of an equivariant parameter for the dual variety.
\end{remark}

\begin{proposition} \label{twopol}
Let $V$ be a representation of $\sA$, and $V_1, V_2$ be a two polarizations of $V$, that is, subrepresentations such that
\[
V_1 \oplus \hbar^{-1} V_1^\vee = V,
\]
\[
V_2 \oplus \hbar^{-1} V_2^\vee = V.
\]
Then the section
\[
s(a) = \frac{\vartheta(V_1)}{\vartheta(V_2)}
\]
is balanced in $\sA$.
\end{proposition}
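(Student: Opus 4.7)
The plan is to reduce the statement to the characterization of balanced sections given by the previous proposition: it suffices to exhibit $s(a) = \vartheta(V_1)/\vartheta(V_2)$ as a product of ratios of the form $\vartheta(a^\lambda c)/\vartheta(a^\lambda \bar c)$.

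First I would extract the algebraic content of the polarization condition. Subtracting $V_1 \oplus \hbar^{-1} V_1^\vee = V = V_2 \oplus \hbar^{-1} V_2^\vee$ at the level of virtual characters gives
\[
V_1 - V_2 = \hbar^{-1}(V_2 - V_1)^\vee,
\]
so $W := V_1 - V_2$ satisfies $W = -\hbar^{-1} W^\vee$. Translating to weights: on the multiset of $\sA\times \langle\hbar\rangle$-weights of $W$, the involution
\[
\iota : w \longmapsto \hbar^{-1} w^{-1}
\]
sends a weight of virtual multiplicity $m$ to a weight of virtual multiplicity $-m$. In particular, any $\iota$-fixed weight (one with $w^2 = \hbar^{-1}$) has multiplicity $m = -m = 0$ and may be discarded.

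Next I would decompose the remaining weights into genuine $\iota$-orbits $\{w,\iota(w)\}$ with multiplicities $\{m,-m\}$. Writing $w = a^\lambda c$ where $c$ is a character trivial on $\sA$ (so $\iota(w) = a^{-\lambda} \hbar^{-1} c^{-1}$), the contribution of this orbit to $\vartheta(W)=\vartheta(V_1)/\vartheta(V_2)$ is
\[
\vartheta(a^\lambda c)^m\, \vartheta(a^{-\lambda} \hbar^{-1} c^{-1})^{-m}.
\]
Using the oddness identity $\vartheta(x^{-1}) = -\vartheta(x)$ applied to $x = a^\lambda \hbar c$, the second factor equals $(-1)^{-m}\vartheta(a^\lambda \hbar c)^{-m}$, so the orbit contributes
\[
(-1)^{m}\left(\frac{\vartheta(a^\lambda c)}{\vartheta(a^\lambda \hbar c)}\right)^{m}.
\]
This is manifestly of the form $\vartheta(a^\lambda c)/\vartheta(a^\lambda \bar c)$ appearing in the previous characterization of balanced sections.

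Finally, taking the product over all orbits expresses $s(a)$ as a product of such ratios (times an overall sign), and the previous proposition then yields that $s(a)$ is balanced in $\sA$. The only delicate step is the orbit bookkeeping for a virtual multiset of weights and verifying that $\iota$-fixed weights do not contribute; once that is done, the result is immediate from the oddness of $\vartheta$ and the characterization of balanced sections.
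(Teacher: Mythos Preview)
Your proof is correct and follows essentially the same approach as the paper: form the virtual difference $W=V_1-V_2$, use the symmetry $W+\hbar^{-1}W^\vee=0$ to pair each weight $w$ with $\hbar^{-1}w^{-1}$, and conclude that $\vartheta(W)=\pm\prod\vartheta(w_i)/\vartheta(\hbar w_i)$ is a product of balanced ratios. Your treatment is slightly more explicit about the involution's fixed points and the use of $\vartheta(x^{-1})=-\vartheta(x)$, but the argument is the same.
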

\begin{proof}
Consider the virtual character
\[
W = V_1 \ominus V_2 \in K_\sA(pt),
\]
so that
\[
W + \hbar^{-1} W^\vee = 0.
\]

It follows that $W$ can be represented as
\[
W = \sum_i \left(
w_i - \frac{1}{\hbar w_i}
\right)
\]
for some characters $w_i \in char(\sA)$.
Then
\[
\frac{\vartheta(V_1)}{\vartheta(V_2)} = \vartheta(W) = \pm \prod \frac{\vartheta(w_i)}{\vartheta(\hbar w_i)}
\]
which is balanced.
\end{proof}

\section{Resonances}
Let $s(a,z)$ be a section balanced in $a$. A point $\sw \in \Lie_\R(\sA)$ is called a {\it resonance} of the function $s(a,z)$ if the following limit is a non-constant function of $a$:
\[
\lim_{q\to 0} s(a q^\sw, z) \not \in \C(z^{1/2}).
\]
For a collection of sections $s_i(a,z)$ balanced in $a$ we denote by $\Res\left(\{s_i(a,z)\}\right)$ the union of resonances of all functions in the collection.

Let $\Lambda^\vee \subset char(\sA) \subset \Lie_\R (\sA)^\vee$ be the sublattice generated by $\sA$-characters appearing in any $s_i(a,z)$. It is a free lattice, and let $\Lambda \subset cochar(\sA) \subset \Lie_\R(\sA)$ be the dual lattice.

The following proposition is clear.
\begin{proposition}
The set $\Res(\{s_i(a,z)\}$ is a $\Lambda$-periodic arrangement of hyperplanes in $\Lie_\R(\sA)$.
\end{proposition}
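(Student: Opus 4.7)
The plan is to reduce to elementary factors using the structural characterization of balanced sections. Since each $s_i(a,z)$ is balanced in $a$, the preceding proposition lets us expand it as a polynomial, with coefficients depending only on $z$, in elementary blocks
\[
\frac{\vartheta(a^{\lambda} c)}{\vartheta(a^{\lambda} \overline c)}, \qquad \lambda \in \Lambda^{\vee},
\]
where $c, \overline c$ may depend on $z$ but not on $a$. Only finitely many such characters $\lambda$ appear across the collection $\{s_i\}$, and together they generate $\Lambda^{\vee}$ by definition.

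Next I compute the $q\to 0$ limit of a single elementary block after a fractional shift $a \mapsto aq^\sw$. Setting $t = \inner{\lambda}{\sw}$ and applying the asymptotic formula \ref{thetaasympt}, one gets
\[
\lim_{q\to 0}\frac{\vartheta(a^{\lambda} c \, q^{t})}{\vartheta(a^{\lambda} \overline c\, q^{t})} =
\begin{cases}
(c/\overline c)^{-\intp{t}-1/2}, & t\notin\Z,\\
(c/\overline c)^{-t-1/2}\,\dfrac{1-a^{\lambda}c}{1-a^{\lambda}\overline c}, & t\in\Z.
\end{cases}
\]
Thus the limit is $a$-independent except on $\inner{\lambda}{\sw}\in\Z$. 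Outside the union
\[
\mathcal{H} := \bigcup_{\lambda}\bigl\{\sw\in\Lie_\R(\sA):\inner{\lambda}{\sw}\in\Z\bigr\},
\]
every elementary block, and therefore every polynomial combination of them, has an $a$-independent limit, so $\Res(\{s_i\})\subseteq \mathcal{H}$. The set $\mathcal{H}$ is a locally finite union of affine hyperplanes with normals in $\Lambda^{\vee}$, and it is $\Lambda$-periodic, since translation by $\mu\in\Lambda$ sends $\{\inner{\lambda}{\sw}=n\}$ to $\{\inner{\lambda}{\sw}=n+\inner{\lambda}{\mu}\}$, still a hyperplane in the same family.

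The remaining step is to argue that the resonance set is itself a union of whole hyperplanes from $\mathcal{H}$. Along the generic part of any single hyperplane $H\subset \mathcal{H}$, exactly one elementary factor is resonant and the limit varies with $\sw\in H$ only by an $a$-independent monomial multiplier, so the property of having non-constant $a$-dependence is locally constant and therefore propagates throughout the generic locus of $H$. The $\Lambda$-periodicity of $\Res(\{s_i\})$ follows from the quasi-periodicity relation \ref{ashift}: a shift $\sw\mapsto\sw+\mu$ with $\mu\in\Lambda$ multiplies the $q\to 0$ limit of $s_i$ by an $a$-monomial, which cannot kill any non-constant $a$-dependence.

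The main obstacle is ruling out spurious cancellations between summands in the polynomial expansion that could excise certain hyperplanes from the resonance set. These are controlled by the $\Lambda$-equivariance of the previous paragraph: any such deletions must be $\Lambda$-equivariant, so the resulting resonance locus remains a (possibly sub-) arrangement of hyperplanes in $\Lie_\R(\sA)$, which is still $\Lambda$-periodic.
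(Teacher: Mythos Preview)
Your argument follows the same line the paper has in mind (the paper declares the proposition ``clear'' and only records that $\Res$ sits inside the arrangement $\{\inner{\lambda}{\sw}\in\Z\}$), and your containment step via the elementary-block asymptotics is exactly right.

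There is one genuine slip in the periodicity step. You assert that the shift $\sw\mapsto\sw+\mu$ multiplies the $q\to 0$ limit of $s_i$ by an $a$-monomial and that such a monomial ``cannot kill any non-constant $a$-dependence.'' The second clause is false in general: $a^{-1}\cdot a = 1$. What actually makes the argument work is that for a \emph{balanced} (hence numerically balanced) section the quadratic form $\sum_j m_j\,\lambda_j\otimes\lambda_j$ vanishes, so contracting with $\mu$ gives $\sum_j m_j\inner{\lambda_j}{\mu}\lambda_j = 0$; the $a$-exponent of the quasi-periodicity prefactor from \eqref{ashift} is therefore zero. The prefactor is then a nonzero $a$-independent constant, and multiplication by it preserves both $a$-constancy and $a$-nonconstancy. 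With this correction your conclusion stands.

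A smaller point: your step arguing that the resonance property ``propagates throughout the generic locus of $H$'' via local constancy is incomplete, because the complement in $H$ of the other walls is typically disconnected, and the non-resonant factors $R_j$ change (by $a$-independent but term-dependent constants) across those cuts. The paper does not address this either, and for the applications to stable envelopes the sections are products rather than sums, where the issue does not arise; but as a general statement about arbitrary balanced sections this step would need more care.
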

Explicitly, it is a subarrangement of the set
\[
\inner{\lambda}{\sw} \in \Z
\]
for all characters $\lambda \in char(\sA)$ appearing in $\{s_i(a,z)\}$. 

In what follows, resonances with respect to equivariant variables will be called {\it resonances}, and resonances with respect to K\"ahler variables will be called {\it walls}. 

\section{Elliptic stable envelopes}

Elliptic stable envelopes were defined by A.Okounkov and M.Aganagic in \cite{AOElliptic}.
They can be viewed as analytic continuation of the K-theoretic stable envelopes for complex slopes.

Elliptic stable envelopes $\Stab_{\fC, T^{1/2}}^{\sX}$ are defined using the same data as K-theoretic stable envelopes, except the slope. They are maps of line bundles over the extended elliptic cohomology scheme 
\[
\Theta(T^{1/2} \sX^A) \otimes \cU' \to \Theta(T^{1/2} \sX) \otimes \cU.
\]
satisfying the following two conditions:
\begin{enumerate}
\item The support is triangular with respect to $\fC$.
\item Near the diagonal they "look like" the classes of the attracting subvarieties:
\[
\Stab_{\fC, T^{1/2}}^{\sX}(p)|_p = \vartheta(N_{p,<0}).
\]
\end{enumerate}

Here $\cU$ is the universal bundle defined in \ref{universaldefined} for $\sX$, and $\cU'$ is the universal bundle for $\sX^\sA$ with a shift of K\"ahler parameters. Since $\sX^\sA$ is the union of isolated fixed points, it is just a shift of $z$ by certain power of $\hbar$ at each point. The shift is uniquely fixed by the normalization condition and should be equivalent to a change of the degree by
\[
\frac{\vartheta(N_p^-)}{\vartheta(N^{1/2}_p)}
\]
for a fixed point $p$.

The elliptic stable envelopes are always unique, but their existence is proven in \cite{AOElliptic} only for Nakajima quiver varieties. However, later they have been constructed  in a very high generality, see \cite{OkInductive1, OkInductive2}.

\section{Infinitesimal slopes}

The walls of $\sX$ are defined in \ref{walls_def}, and they partition $\Lie_\R(\sK)$ into K\"ahler chambers. Certain chambers in $\Lie_\R(\sK)$ are very important from many perspectives: the quantum difference equations, Bethe ansatz, abelianization techniques, shuffle algebras.

Let $\cU_0 \subset H^2(\sX, \R)$ be an infinitesimal neighborhood of 0, and $\Walls_0(\sX) \subset \Walls(\sX)$ be the set of hyperplanes passing through 0. They partition $\cU_0$ into chambers
\[
\cU_0 \setminus \Walls_0(\sX) = \coprod \fD_i(X),
\]
and we denote by $\fD_+(\sX), \fD_-(\sX)$ the chambers of ample and anti-ample bundles respectively.
Let $\fD$ be any chamber in $\cU_0 \setminus \Walls_0(\sX)$.

\section{G-fixed subvarieties}

Let $G = \nu_\sw = \langle e^{2\pi i \sw } \rangle \subset \sA$ be a finite cyclic subgroup.  The subvariety $X^{G}\subset X$ of $G$-fixed points has the same set of $\sA$-fixed points as $\sX$. 
An equivariant chamber $\fC \subset \Lie_\R(\sA)$ defines the equivariant chamber for $X^G$, which we denote by the same symbol.

We have a canonical map
\[
\Pic(\sX) \to \Pic(\sX^G),
\]
and thus a K\"ahler chamber $\fD$ for $\sX$ defines a chamber for $\sX^G$. Note that not any chamber of $\sX^G$ can be obtained this way.

The polarization for $\sX^G$ is defined as the $G$-fixed part $T^{1/2,G}$ of the polarization $T^{1/2}$ of $\sX$.

It turns out that K-theoretic stable envelopes of infinitesimal slopes for $\sX^G$ can be obtained from elliptic stable envelopes for $\sX$ in a beautiful way. For this, we need to discuss equivariant limits of elliptic stable envelopes.

\section{Equivariant limits of elliptic stable envelopes}

Let $\sw \in \Pic(\sX) \otimes \R$ be generic.
It was shown by M.Aganagic and A.Okounkov that the limit
\[
\lim_{q \to 0} \Stab(a, zq^\sw)
\]
is equal to the K-theoretic stable envelope for the slope $\sw \in \Pic(X) \otimes \R$. In this case, the stable envelopes are balanced in $z$, so that the limit is well defined.

We need to consider limits with shifts of the equivariant parameters. 
The stable envelopes are not balanced with respect to the equivariant parameters. However, if we divide them by the polarization, they become balanced.

Let us define
\[
\stab(p) = \frac{\Stab(p)}{\vartheta(T^{1/2})}, \ \ \ \stab(p)|_r = \frac{\Stab(p)|_r}{\vartheta(T^{1/2}|_r)}.
\]
The formula for $\stab(p)$ is to be understood in the following sense:  if we get a trivial weight in the numerator or the denominator, we should remove the corresponding factor (note that this does not change the quasiperiods).
\begin{proposition}
The classes $\stab(p)$
are balanced in equivariant variables.
\end{proposition}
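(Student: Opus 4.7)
The plan is to reduce the claim to Proposition \ref{twopol}. The strategy is to analyze the line bundle of which the restriction $\stab(p)|_r$ is a section for each fixed point $r \in \sX^\sA$, and then exhibit the numerator/denominator structure in the form of two $\sA$-polarizations of a common virtual representation.

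First, by the defining properties of $\Stab$, the restriction $\Stab(p)|_r$ is a section of
$$ \Theta(T^{1/2}\sX|_r) \otimes \cU|_r \otimes \Theta(T^{1/2}\sX^\sA|_p)^{-1} \otimes (\cU'|_p)^{-1}. $$
Since $p$ is $\sA$-fixed, $T^{1/2}\sX^\sA|_p$ has trivial $\sA$-character, and the Poincar\'e-type factors $\cU|_r, \cU'|_p$ contribute only cross ($\sA$-$\sK$ bilinear) quasiperiods, never purely quadratic in $a$. Hence the only pure quadratic $\sA$-contribution to the $a$-quasiperiods of $\Stab(p)|_r$ comes from $\Theta(T^{1/2}\sX|_r)$, and dividing by $\vartheta(T^{1/2}|_r)$ cancels it exactly. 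So $\stab(p)|_r$ is numerically balanced in $a$.

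Numerical balancedness is not sufficient, as already noted in the text. To upgrade it, the next step is to invoke the structure description for balanced sections stated earlier: it is enough to show that $\stab(p)|_r$ is a product of theta-ratios $\vartheta(a^\lambda c)/\vartheta(a^\lambda \overline c)$ in which numerator and denominator $\sA$-characters coincide. On the diagonal $r = p$ this is immediate from the normalization $\Stab(p)|_p = \vartheta(N_p^-)$ (times the symmetrization prefactor): since $T^{1/2}|_p$ and $N_p^-$ are both $\sA$-polarizations of $T_p\sX$, Proposition \ref{twopol} gives the required form directly, yielding a product of balanced factors of the form $\vartheta(w)/\vartheta(\hbar w)$.

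For off-diagonal $r \neq p$ the plan is to use the inductive/abelianization construction of elliptic stable envelopes of \cite{AOElliptic, OkInductive1, OkInductive2}, which writes $\Stab(p)|_r$ as a finite product of theta functions whose $\sA$-characters lie among the tangent weights of $\sX$ at $r$, together with K\"ahler-dependent shifts that do not affect $\sA$-balancedness. After cancellation with $\vartheta(T^{1/2}|_r)$ in the denominator, each remaining numerator weight $w$ should be paired with a denominator weight $\hbar^{-1} w^{-1}$, exhibiting the two-polarization structure required by Proposition \ref{twopol}. The main obstacle is precisely this weight matching for general off-diagonal restrictions: it requires carefully tracking which theta factors produced by the recursive construction survive after the division by $\vartheta(T^{1/2}|_r)$ and verifying they repackage into balanced ratios. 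Once this combinatorial matching is in place, Proposition \ref{twopol} finishes the proof.
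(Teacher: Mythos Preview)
Your argument has a genuine gap in the off-diagonal case, and the gap is not merely combinatorial bookkeeping.

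For the diagonal $r=p$ your reduction to Proposition~\ref{twopol} is clean and correct: $N_p^-$ and $T^{1/2}|_p$ are two honest polarizations of $T_p\sX$. But for $r\neq p$ there is no reason the numerator $\Stab(p)|_r$ should be $\vartheta(V_1)$ for a polarization $V_1$ of $T_r\sX$. The restriction $\Stab(p)|_r$ is a section of the line bundle $\Theta(T^{1/2}|_r)\otimes(\text{cross terms})$, but a section of that bundle can be a complicated linear combination of products of thetas, not a single $\vartheta(V_1)$. So Proposition~\ref{twopol} is simply not applicable off the diagonal, and your ``weight-matching'' step is not just tedious --- it is not the right mechanism. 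You correctly flag this as the obstacle, but it is a structural one, not a combinatorial one.

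The paper's argument sidesteps this entirely. It does not try to reduce to Proposition~\ref{twopol}. Instead it observes that for \emph{hypertoric} varieties the explicit off-shell formula for the elliptic stable envelope is already manifestly of the balanced form $\prod \vartheta(a^\lambda c)/\vartheta(a^\lambda \overline c)$ (this is visible in the explicit $\Sh\cdot\Rt$ formulas in Chapter~\ref{explicitexamples}). Then, since the stable envelopes for Nakajima quiver varieties are built from the hypertoric ones via abelianization \cite{AOElliptic}, and abelianization produces sums/symmetrizations of such balanced products, the result for $\sX$ follows by uniqueness of stable envelopes. The key point is that balancedness is checked at the level of the explicit hypertoric formula rather than at the level of an abstract line-bundle degree computation.

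Your numerical-balancedness step (degree zero in $a$) is fine and matches the paper's first sentence. What you should replace is the attempted invocation of Proposition~\ref{twopol} off-diagonal: instead, argue that the abelianization formula itself is manifestly balanced, and that this property survives symmetrization and restriction.
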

\begin{proof}
From the definition of the stable envelopes, it is clear that $\stab(p)$ is a class of degree 0. For hypertoric varieties, it follows from the explicit construction that $\stab(p)$ is balanced. The stable envelopes for the Nakajima quiver varieties can be constructed from the stable envelopes for the hypertoric varieties using the abelianization procedure \cite{AOElliptic}, and thus by uniqueness they are balanced.
\end{proof}

After we divided by $\vartheta(T^{1/2})$, we no longer get an integral elliptic class. Luckily, the denominator is very simple and can be controlled in the limit.

\begin{theorem} \label{mainequiv}
Consider the following class:
\[
S = \bigwedge\nolimits^\bullet(T^{1/2, G})^\vee \cdot \left.
\stab(p)
\right|_{a \to a q^\sw}
\]
Then in the following limit we obtain the K-theoretic stable envelopes for the $G$-fixed subvariety:
\[
\lim_{z \to 0_{\fD}} \left(
z^{\chi(\sw, -) - \chi_p(\sw, -)} 
\lim_{q\to 0} S
\right) = \pm \hbar^{-\intp{\ind_p \cdot \sw}- \rk \ind_p/2 + \rk \ind_p^G/2} \cdot \Stab_{\sX^{G}, \fC, T^{1/2, G}}
\]
\end{theorem}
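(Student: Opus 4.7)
The strategy is to verify that the right-hand side satisfies the three defining properties of the K-theoretic stable envelope for $\sX^G$ with polarization $T^{1/2,G}$, chamber $\fC$, and slope lying in the infinitesimal chamber $\fD$. Since the stable envelope is characterized uniquely by these properties, matching them will finish the proof. First I would argue that the limit is well-defined: by the preceding proposition, $\stab(p)$ is balanced in $a$, so $\lim_{q\to 0}\stab(p)|_{a\to aq^\sw}$ exists as a rational function in $a^{1/2}$ and $z^{1/2}$, and the prefactor $\bigwedge^\bullet(T^{1/2,G})^\vee$ is a pure K-theory class independent of $q$ and $z$. Elliptic stable envelopes are also balanced in $z$, so after the monomial twist $z^{\chi(\sw,-)-\chi_p(\sw,-)}$ the subsequent $z\to 0_\fD$ limit exists and produces a rational function in $a^{1/2}$.

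Next I would check the support condition. The elliptic stable envelope $\Stab(p)$ has support contained in $\Attr_{\fC}^{f}(p)$. Since $\sX^{\sA} = (\sX^G)^{\sA}$ and the $\fC$-partial order on fixed points coincides for $\sX$ and $\sX^G$, restriction to fixed points shows the limit is upper-triangular with respect to this common ordering; so the support axiom of $\Stab_{\sX^G,\fC,T^{1/2,G}}$ holds for the limit.

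The most delicate step is the diagonal normalization. Writing $\stab(p)|_p = \vartheta(N_{p,<0})/\vartheta(T^{1/2}_p)$, one splits each character $w$ of $T^{1/2}_p$ or $N_{p,<0}$ into two groups according to whether $\langle w,\sw\rangle\in\Z$ (the $G$-fixed weights, living in $T^{1/2,G}_p$) or $\langle w,\sw\rangle\notin\Z$ (the $G$-moving weights). Using the asymptotics
\[
\vartheta(w q^{\langle w,\sw\rangle}) \sim (-1)^{\intp{\langle w,\sw\rangle}+1}\, q^{\qq(\langle w,\sw\rangle)}\, w^{-\intp{\langle w,\sw\rangle}-1/2}\cdot \begin{cases}(1-w) & \text{if } \langle w,\sw\rangle\in\Z \\ 1 & \text{otherwise}\end{cases}
\]
and grouping terms, the $G$-moving weights in numerator and denominator contribute a pure monomial in $a$ and $\hbar$ whose $a$-exponents are precisely cancelled by the $z^{\chi(\sw,-)-\chi_p(\sw,-)}$ twist in the $z\to 0_\fD$ limit, leaving the claimed $\hbar$-power. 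The $G$-fixed weights assemble into $\ahat(N_p^{-,G})/\ahat(T^{1/2,G}_p)$, and multiplication by $\bigwedge^\bullet(T^{1/2,G}_p)^\vee$ converts this into the required normalization $\sqrt{\det N_p^{-,G}/\det N_p^{1/2,G}}\cdot \bigwedge^\bullet(N_p^{-,G,\vee})$.

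Finally, for the slope condition I would use the balanced structure of $\stab(p)$ in $z$. The $\sA$-Newton polytope of $\stab(p_1)|_{p_2}$ after the shift $a\to aq^\sw$ acquires a translation by $\chi(\sw,-)$, and the $z\to 0$ limit within the chamber $\fD$ projects onto the terms whose $z$-degree is extremal for $\fD$. Combined with the diagonal normalization controlling $\chi_{p_1}$ and $\chi_{p_2}$, this produces exactly the infinitesimal-slope window condition for $\Stab_{\sX^G,\fC,T^{1/2,G}}$. The main obstacle is the bookkeeping of monomial factors in $\hbar$ and $a$ arising from the $G$-moving weights through the two successive limits, which is where the identity $\hbar^{-\intp{\ind_p\cdot\sw}-\rk\ind_p/2+\rk\ind_p^G/2}$ must be verified by carefully pairing each moving weight with its $\hbar$-dual in the polarization.
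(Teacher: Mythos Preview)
Your overall strategy matches the paper's: verify the three defining axioms of K-theoretic stable envelopes for $\sX^G$. The support argument is the same. However, two points need correction.

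\textbf{Diagonal normalization.} Your claim that the $a$-exponents coming from $G$-moving weights are ``precisely cancelled by the $z^{\chi(\sw,-)-\chi_p(\sw,-)}$ twist'' is wrong. On the diagonal $r=p$ that twist is $z^0=1$, and in any case a monomial in $z$ cannot cancel a monomial in $a$. The correct mechanism is the one you mention only at the very end: the $\hbar$-duality built into the polarization. The paper makes this transparent by first rewriting
\[
\frac{\vartheta(N_p^-)}{\vartheta(N_p^{1/2})} = \frac{\vartheta(\hbar\cdot\ind_p)}{\vartheta(\ind_p)},
\]
so that for each weight $w_i$ of $\ind_p$ the ratio $\vartheta(\hbar w_i q^{\langle w_i,\sw\rangle})/\vartheta(w_i q^{\langle w_i,\sw\rangle})$ has identical $a$-asymptotics in numerator and denominator; only the $\hbar$-power $\hbar^{-\intp{\langle w_i,\sw\rangle}-1/2}$ survives for $G$-moving $w_i$, and the $G$-fixed $w_i$ produce $\ahat$-factors. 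No $z$-limit is needed for this step.

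\textbf{Window condition.} Your argument here (``the $z\to 0_\fD$ limit projects onto the terms whose $z$-degree is extremal for $\fD$'') is too vague to constitute a proof. The paper invokes Proposition~4.3 of Aganagic--Okounkov, which is exactly the statement that the $q\to 0$ limit of elliptic stable envelopes after a K\"ahler shift satisfies the K-theoretic degree-inclusion condition for the corresponding slope. The additional point the paper makes, and which you do not address, is why one may take $q\to 0$ \emph{first} (with an equivariant rather than K\"ahler shift) and only afterwards send $z\to 0_\fD$: for infinitesimal slopes $\eps\in\fD$ the quantity $zq^\eps$ is ``slowly varying'' as $q\to 0$, so the two orders of limits agree. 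Without this, the window condition you obtain is for $\sX$, not for $\sX^G$ with an infinitesimal slope.
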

Here the left hand side gives an element of $K_\sT(\sX)$, and the right hand side is an element of $K_\sT(\sX^G)$ which we identify
using the restriction
\[
K_\sT(\sX) \to K_\sT(\sX^G).
\]
The $z$-limit is taken for any infinitesimal K\"ahler chamber, and we get K-theoretic stable envelopes for $\sX^G$ for the corresponding infinitesimal chamber.

Before we prove the theorem, let us figure out what we get near the diagonal.
Let us start with a limit without a shift.
Diagonal components of elliptic stable envelopes are
\[
\Stab(p)|_p = \vartheta(N_p^-)
\]
In the limit $q \to 0$ we get diagonal components of K-theoretic stable envelopes in the correct normalization: 
\[
\lim_{q \to 0}
\left(
\bigwedge\nolimits^\bullet(T^{1/2, G})^\vee \cdot  \frac{\vartheta(N_p^-)}{\vartheta(N^{1/2})}
\right)
 = \sqrt{\frac{\det N_p^-}{\det N_p^{1/2}}} \cdot \bigwedge\nolimits^\bullet (N_p^-)^\vee = \frac{\hbar^{-\rk \ind_p/2}}{\det \ind_p} \cdot  \bigwedge\nolimits^\bullet (N_p^-)^\vee.
\]

Now consider the case with a nontrivial shift $\sw$.
The main idea is the following: if 
\[
V = \sum w_i
\]
is a representation of $\sT$, then for certain $\alpha$ the limit
\[
\lim_{q \to 0} q^\alpha \left.
\vartheta(V)
\right|_{a\to aq^\sw} 
\]
is equal to
\[
\bigwedge\nolimits^\bullet(V^{G}) = \prod_{i: \inner{w_i}{\sw} \in \Z} (1-w_i).
\]
up to a monomial prefactor. Indeed, in the symptotics, the factors with integer shifts give rise to $\ahat$-type factor, whicle the other give monomial contribution.

\begin{lemma}
\begin{multline}
\lim_{q \to 0} \left[
\frac{\vartheta(N_p^-)}{\vartheta(N^{1/2}_p)}
\right]_{a \to aq^\sw} = \pm \hbar^{-\intp{{\ind_p}\cdot{\sw}} - \rk \ind_p/2 + \rk \ind_p^G/2} \cdot \frac{\ahat(N_p^{-, G})}{\ahat(N_p^{1/2, G})} =
\\
=
\pm \frac{\hbar^{-\intp{{\ind_p}\cdot{\sw}}  - \rk \ind_p/2}}{\det \ind^G} \cdot \frac{\bigwedge\nolimits^\bullet(N_p^{-, G})^\vee}{\bigwedge\nolimits^\bullet(N_p^{1/2, G})^\vee} 
\end{multline}
\end{lemma}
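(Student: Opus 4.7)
The plan is to reduce the computation to a pair-by-pair analysis by first simplifying the virtual character $V := N_p^{-} - N_p^{1/2}$. Since $N_p^{1/2}$ is a polarization, $N_p = N_p^{1/2} + \hbar^{-1}(N_p^{1/2})^\vee$, and decomposing $N_p^{1/2} = \ind_p + \ind_p^-$ into positive/negative parts for $\fC$ and taking the negative part of both sides gives $N_p^{-} = \ind_p^{-} + \hbar^{-1}\ind_p^\vee$. The common summand $\ind_p^{-}$ cancels, producing the clean expression
\[
V \;=\; \hbar^{-1}\ind_p^\vee - \ind_p \;=\; \sum_{k}\bigl(\hbar^{-1}u_k^{-1} - u_k\bigr),
\]
where $\ind_p = \sum_k u_k$. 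This matches $\hbar^{-1}V^\vee = -V$, consistent with $V$ arising from two polarizations as in Proposition \ref{twopol}.

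Second, I would apply the asymptotic \ref{thetaasympt} to each pair $(u_k, \hbar^{-1}u_k^{-1})$ separately. Writing $\gamma_k := \inner{u_k}{\sw}$ and using that $\qq$ is an even function, the $q$-exponents $\qq(\gamma_k)$ and $\qq(-\gamma_k)$ cancel pair-by-pair, confirming the balanced property. For $\gamma_k \in \Z$, the $(1-x)$ factors in the asymptotic combine (after $(1-w)=-w^{1/2}\ahat(w)$) into the ratio $\ahat(\hbar^{-1}u_k^{-1})/\ahat(u_k)$, and a pure monomial $\hbar^{-\gamma_k}$ survives. For $\gamma_k\notin\Z$, there is no $(1-x)$ factor; using the identity $\intp{-\gamma_k} = -\intp{\gamma_k}-1$, all $u_k$-dependence cancels and the pair collapses to the monomial $-\hbar^{-\intp{\gamma_k}-1/2}$.

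Third, I would assemble the contributions. The $\ahat$-factors appear only when $\gamma_k\in\Z$, i.e., when $u_k$ and $\hbar^{-1}u_k^{-1}$ are $G$-fixed; their product is exactly
\[
\prod_{\gamma_k\in\Z}\frac{\ahat(\hbar^{-1}u_k^{-1})}{\ahat(u_k)} \;=\; \frac{\ahat\bigl((\hbar^{-1}\ind_p^\vee)^G\bigr)}{\ahat(\ind_p^G)} \;=\; \frac{\ahat(N_p^{-,G})}{\ahat(N_p^{1/2,G})},
\]
where the $(\ind_p^-)^G$ part cancels trivially between numerator and denominator. The $\hbar$-exponents sum to
\[
-\sum_{\gamma_k\in\Z}\gamma_k \;-\;\sum_{\gamma_k\notin\Z}\Bigl(\intp{\gamma_k}+\tfrac{1}{2}\Bigr) \;=\; -\intp{\ind_p\cdot\sw} - \tfrac{\rk\ind_p-\rk\ind_p^G}{2},
\]
and the signs accumulate to $(-1)^{\rk\ind_p-\rk\ind_p^G}$, giving the stated $\pm$ prefactor. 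Finally, the second displayed equality follows from the routine identity $\ahat(W) = (\det W)^{1/2}\bigwedge^\bullet W^\vee$ together with the computation $\det N_p^{-,G}/\det N_p^{1/2,G} = \hbar^{-\rk\ind_p^G}(\det \ind_p^G)^{-2}$, which absorbs the extra $\hbar^{\rk\ind_p^G/2}$ into the $1/\det\ind_p^G$ prefactor.

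The main obstacle is careful floor-function and sign bookkeeping: one must verify both that the $q$-exponents cancel pair-by-pair (confirming balance) and that all residual $u_k$-dependence really drops out in the non-integer case, so that only a pure $\hbar$-monomial survives outside the $\ahat$-part.
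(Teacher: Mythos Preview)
Your argument is correct and follows essentially the same route as the paper's proof. The only cosmetic difference is that the paper first uses oddness of $\vartheta$ to rewrite the ratio as $\prod_i \vartheta(\hbar w_i q^{\langle w_i,\sw\rangle})/\vartheta(w_i q^{\langle w_i,\sw\rangle})$, so both factors carry the \emph{same} $q$-shift and the balancing is immediate, whereas you keep the form $\vartheta(\hbar^{-1}u_k^{-1})/\vartheta(u_k)$ with opposite shifts and invoke evenness of $\qq$ to cancel the $q$-exponents; either way the pair-by-pair bookkeeping you describe yields exactly the claimed $\hbar$-power and $\ahat$-ratio.
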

\begin{proof}
We have
\[
\frac{\vartheta(N_p^-)}{\vartheta(N_p^{1/2})} = \frac{\vartheta(\hbar \cdot \ind_p)}{\vartheta(\ind_p)}.
\]
Letting
\[
\ind_p = w_1 + ... + w_r,
\]
then
\[
\left.
\frac{\vartheta(N_p^-)}{\vartheta(N^{1/2}_p)}
\right|_{a = aq^\sw} =  \prod_i \frac{\vartheta(\hbar w_i q^{\inner{w_i}{\sw}})}{\vartheta( w_i q^{\inner{w_i}{\sw}})}
 = 
\hbar^{-\intp{\inner{\ind_p}{\sw}} - \rk \ind_p/2} \cdot \prod_{i:\inner{w_i}{\sw} \in \Z} \frac{1-\hbar w_i}{1-w_i}.
\]
On the other hand,
\[
\frac{\ahat(N_p^{-, G})}{\ahat(N_p^{1/2, G})} = \frac{\ahat(\hbar \cdot\ind_p^{ G})}{\ahat(\ind_p^{G})} = \hbar^{-\rk \ind^G_p/2}
\prod_{i:\inner{w_i}{\sw} \in \Z} \frac{1-\hbar w_i}{1-w_i}.
\]
\end{proof}

As a corollary, we get that the limit of diagonal components of $S$ after the shift are
\[
\lim_{q\to 0} S|_p = \frac{ \hbar^{-\intp{\ind_p\cdot\sw} - \rk \ind_p/2 } }
{\det \ind_p^G}
\bigwedge\nolimits^\bullet(N_p^{-,G})^\vee,
\]
while the K-theoretic stable envelopes for $\sX^G$ are normalized as
\[
\Stab^{\sX^G}(p)|_p = \frac{ \hbar^{ - \rk \ind^G_p/2 } }
{\det \ind_p^G}
\bigwedge\nolimits^\bullet(N_p^{-,G})^\vee
\]
So, we proved that on the diagonal we get the correct values.

Let us now proceed to the proof of Theorem \ref{mainequiv}.
\begin{proof} (of Theorem \ref{mainequiv})

We see that $S$ is an integral K-theoretic class.
The K-theoretic support condition for $\sX^G$ follows from the support condition for $\sX$ since, the ordering of the fixed points is the same.

The last thing to check is the Newton polytope inclusion of the off-diagonal components into the diagonal components.
Applying the proposition 4.3 of \cite{AOElliptic}, we get that
\begin{multline}
\deg_\sA 
\lim_{z \to 0_\fD} \left(
\bigwedge\nolimits^\bullet (T^{1/2,G}_r)^\vee \otimes
\lim_{q \to 0} \stab(p)|_r
\right) \subset
\deg_\sA 
\lim_{z \to 0_\fD} \left(
\bigwedge\nolimits^\bullet (T^{1/2,G}_r)^\vee \otimes
\lim_{q \to 0} \stab(r)|_r
\right) +\\
+ \text{infinitesimal shift}
\end{multline}
The infinitesimal shift is given by the chamber $\fD$. Originally, they were shifting $z$, and then taking the limit in $q$, but for infinitesimal slopes $\sw$ it is possible to take the limit without a shift, and then take the limit with respect to $z$, since $zq^\sw$ is "slowly changing" when $q\to 0$.

\end{proof}

For computational purposes, it is convenient to reformulate the theorem as follows. Let us consider the following normalization:
\[
\tilde T_{pr} = \frac{\Stab(p)|_r}{\Stab(r)|_r}
\]
It is a triangular matrix with $T_{pp}(a,z) = 0$ on the diagonal.

Let
\begin{equation}\label{anormal}
A_{pr}(a,\hbar) = \frac{\Stab^{K}_{\sX^G, \fD}(p)|_{r}}{\Stab^{K}_{\sX^G, \fD}(r)|_{r}}
\end{equation}
be the matrix of K-theoretic stable envelopes of $\sX^G$ with the similar normalization. Then the  theorem \ref{mainequiv} can be reformulated as:
\begin{theorem} We have \label{limitnormal}
\[
\lim_{z\to 0_\fD} Z \left(
\lim_{q\to 0} T(aq^\sw, z) 
\right) Z^{-1} = H A(a, \hbar) H^{-1}
\]
where $Z$ is the diagonal matrix
\[
Z = \diag(z^{\chi_p(\sw, -)})|_{p \in \sX^\sA},
\]
and $H$ is a diagonal matrix of monomials in $\hbar$ with signs
\[
H = \diag\left(
 \pm \hbar^{-m_p(\sw)/2}
\right)|_{p \in \sX^\sA}
\]
for
\[
m_p(\sw) = \inner{\det T_p^{1/2}}{\sw} - \intp{N_p^- \cdot \sw} + \frac{\rk \ind_p - \rk \ind_p^G}{2}.
\]
\end{theorem}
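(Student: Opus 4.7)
The strategy is to derive this matrix identity directly from Theorem \ref{mainequiv} by taking ratios of its restrictions to pairs of fixed points. Since Theorem \ref{mainequiv} is an equality in $\sK_\sT(\sX^G)$ and the fixed loci coincide, $(\sX^G)^\sA = \sX^\sA$, evaluating at each $r \in \sX^\sA$ yields a scalar identity that can be divided by the diagonal case at $r$.

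Concretely, I would first restrict Theorem \ref{mainequiv} applied with stable envelope index $p$ to the fixed point $r$, obtaining a scalar relation between $\lim_{z\to 0_\fD} z^{(\chi-\chi_p)(\sw,-)} \lim_{q\to 0}\!\bigl[\bigwedge\nolimits^\bullet (T^{1/2,G}_r)^\vee \cdot \stab(p)|_r(aq^\sw,z)\bigr]$ on the left and $\pm \hbar^{\alpha_p}\,\Stab^K_{\sX^G}(p)|_r$ on the right, where $\alpha_p = -\intp{\ind_p\cdot\sw} - \rk\ind_p/2 + \rk\ind_p^G/2$. Dividing this by the same relation with $p$ replaced by $r$, three simultaneous cancellations occur: the factor $\bigwedge\nolimits^\bullet(T^{1/2,G}_r)^\vee$ divides out since it depends only on $r$; the denominator $\vartheta(T^{1/2}_r)$ implicit in $\stab = \Stab/\vartheta(T^{1/2})$ cancels between numerator and denominator, so that the ratio of $\stab$'s collapses to $T_{pr}(aq^\sw,z)$; and the universal $z^{\chi(\sw,-)}$ monomial drops out, leaving the entrywise $z$-shift $z^{(\chi_p-\chi_r)(\sw,-)}$. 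The right-hand side becomes $\pm\hbar^{\alpha_p-\alpha_r}\,A_{pr}(a,\hbar)$. Recognizing these prefactors as the $(p,r)$-entries of $Z(\lim T)Z^{-1}$ and $HAH^{-1}$ gives the claimed matrix identity modulo the identification of the $\hbar$-exponent.

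The main obstacle is verifying that the naturally occurring exponent $\alpha_p-\alpha_r$ equals $(m_r-m_p)/2$ for the prescribed $m_p = \inner{\det T_p^{1/2}}{\sw} - \intp{N_p^-\cdot\sw} + (\rk\ind_p - \rk\ind_p^G)/2$, whose $\inner{\det T_p^{1/2}}{\sw}$ term is not directly visible in Theorem \ref{mainequiv}. To reconcile, I would use the polarization decomposition $N_p^- = (T_p^{1/2})_{<0} + \hbar^{-1}\ind_p^\vee$, which follows from $T_p = T_p^{1/2} + \hbar^{-1}(T_p^{1/2})^\vee$ and the sign decomposition with respect to $\fC$, together with the elementary identities $\intp{x}+\intp{-x} = -1$ for $x\notin\Z$ and $\intp{x}+\intp{-x}=0$ for $x\in\Z$. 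These express $\intp{N_p^-\cdot\sw}$ as $\intp{(T_p^{1/2})_{<0}\cdot\sw} - \intp{\ind_p\cdot\sw} - (\rk\ind_p - \rk\ind_p^G)$; the remaining $a$-monomial prefactors from the asymptotic $\vartheta(xq^\sw) \sim q^{\qq(\sw)} x^{-\intp{\sw}-1/2}$ in Section 2.1 contribute the $\inner{\det T_p^{1/2}}{\sw} = \sum_{w\in T_p^{1/2}} \inner{w}{\sw}$ term. Collecting these pieces and performing the analogous rewriting at $r$ reduces the identification to a routine algebraic check.
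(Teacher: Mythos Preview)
Your approach is correct and is precisely the derivation implicit in the paper: the text introduces Theorem~\ref{limitnormal} with the phrase ``the theorem \ref{mainequiv} can be reformulated as'' and gives no separate proof, so the intended argument is exactly the entrywise division you describe --- restrict Theorem~\ref{mainequiv} to each fixed point $r$, divide the statement for $p$ by the statement for $r$, and observe that the $\bigwedge^\bullet(T^{1/2,G}_r)^\vee$ and $\vartheta(T^{1/2}_r)$ factors cancel, collapsing the ratio to $\tilde T_{pr}(aq^\sw,z)$ and $A_{pr}$.

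One small caution on the $\hbar$-exponent: your decomposition $N_p^- = (T_p^{1/2})_{<0} + \hbar^{-1}\ind_p^\vee$ and the floor identities are the right tools, but the last sentence about ``$a$-monomial prefactors from the asymptotic'' contributing $\langle\det T_p^{1/2},\sw\rangle$ is slightly off --- in the ratio $\tilde T_{pr}$ the $\vartheta(T^{1/2})$ factors have already cancelled, so there are no such residual monomials. Rather, the term $\langle\det T_p^{1/2},\sw\rangle$ in $m_p$ is there so that $m_p + 2\alpha_p = \sum_{w\in T_p^{1/2}}\{\langle w,\sw\rangle\} + \tfrac{1}{2}(\rk\ind_p-\rk\ind_p^G)$ (fractional parts), and one checks this quantity is independent of $p$ because any two polarization fibers $T_p^{1/2}$, $T_r^{1/2}$ have the same multiset of $\sA$-weights modulo $\Z$ after pairing with $\sw$ (they are restrictions of the same bundle $T^{1/2}\sX$). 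This is the routine check; the paper does not spell it out either.
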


\section{Resonances of elliptic stable envelopes} \label{walls_def}

As an application of \ref{mainequiv}, we get a simple geometric description of the resonances of elliptic stable envelopes.

For an element $\sw \in \Lie_\R(\sA)$ consider the cyclic subgroup $G = \langle e^{2\pi i \sw} \rangle$. For generic $\sw$ the $G$-fixed subvariety is equal to $\sX^\sA$. Let us denote
\[
\Res(\sX) = \{
\sw \in \Lie_\R(\sA): X^{G} \neq X^\sA
\}.
\]
We call $\Res(\sX) \subset \Lie_\R(\sA)$ resonances. Earlier we defined resonances of a collection of functions. The connection between them is given by the following proposition.
\begin{proposition}
The following three hyperplane arrangements in $\Lie_\R(\sA)$ coincide:
\begin{enumerate}
\item $S_1 = \Res(X)$
\item $S_2 = \Res(\{\tilde T_{pr}(a,z)\}_{p,r \in \sX^\sA})$
\item $S_3 = \{\sw \in \Lie_\R(\sA): \inner{\alpha}{\sw} \in \Z, \ \alpha \in char_\sA(T_p\sX), p \in \sX^\sA) \}$.
\end{enumerate}
\end{proposition}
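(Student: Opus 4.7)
The plan is to prove the chain of inclusions $S_1 \subseteq S_2 \subseteq S_3 \subseteq S_1$, which is cleaner than trying to prove each equality directly.

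First I would handle $S_1 \supseteq S_3$ and $S_3 \supseteq S_1$ simultaneously by a direct torus argument. Given $\sw \in \Lie_\R(\sA)$ and $G = \langle e^{2\pi i \sw}\rangle$, the subvariety $\sX^G$ strictly contains $\sX^\sA$ precisely when some $\sA$-fixed point $p$ has a $G$-fixed tangent direction. A weight vector of weight $\alpha \in char_\sA(T_p\sX)$ is $G$-fixed if and only if $e^{2\pi i \inner{\alpha}{\sw}} = 1$, i.e.\ $\inner{\alpha}{\sw} \in \Z$. The ``only if'' direction uses the fact that a nonzero $G$-fixed tangent vector at an $\sA$-fixed point $p$ integrates, via the linearized $\sA$-action in a neighborhood of $p$, to a one-parameter family of $G$-fixed points through $p$ not lying in $\sX^\sA$.

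Next I would show $S_2 \subseteq S_3$ by inspecting the form of $\tilde T_{pr}(a,z)$. Each restriction $\Stab(p)|_r$ is a product of theta functions whose $\sA$-character arguments come entirely from tangent weights at $\sA$-fixed points (attracting normal weights, polarization, and slope corrections), multiplied by K\"ahler monomials. So $\tilde T_{pr}$ has the schematic form $\prod_i \vartheta(a^{\lambda_i} z^{\mu_i} c_i)^{m_i}$ with each $\lambda_i$ a tangent character at some fixed point. By the description of resonance hyperplanes given earlier in this section, any resonance of $\tilde T_{pr}$ must lie on one of the hyperplanes $\inner{\lambda_i}{\sw} \in \Z$, which belongs to $S_3$.

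The heart of the argument is $S_1 \subseteq S_2$, where I would invoke Theorem \ref{limitnormal}. Suppose $\sw \in S_1$, so $\sX^G$ strictly contains $\sX^\sA$ and therefore has components of positive dimension. Then the K-theoretic stable envelopes $\Stab^K_{\sX^G, \fD}$ for the infinitesimal chamber $\fD$ are non-trivial classes on $\sX^G$, and the corresponding normalized matrix $A(a,\hbar)$ from \eqref{anormal} has at least one off-diagonal entry that is a non-constant function of $a$. Theorem \ref{limitnormal} identifies this entry, up to conjugation by the diagonal matrices $Z$ and $H$ (which carry only $z$- and $\hbar$-dependence respectively), with a coefficient in the $z \to 0_\fD$ expansion of $\lim_{q\to 0} T(a q^\sw, z)$. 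Since neither $Z$ nor $H$ contributes any $a$-dependence, the non-constancy in $a$ of $A$ must already be present in the inner limit $\lim_{q\to 0} \tilde T_{pr}(aq^\sw, z)$ for some $p, r$. Hence $\sw \in S_2$.

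The step I expect to be the main obstacle is this last one: one has to verify that the $z \to 0_\fD$ operation merely selects leading coefficients of a convergent $z$-expansion and thus cannot \emph{create} $a$-dependence that was not already present in $\lim_{q\to 0} T(aq^\sw, z)$. Once this observation is in place, the three inclusions chain together and deliver the desired equality $S_1 = S_2 = S_3$.
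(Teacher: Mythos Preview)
The paper does not actually prove this proposition in the text; it simply cites \cite{KononovSmirnov1}. So there is no in-paper argument to compare against, and your proposal stands as a reasonable self-contained proof. The chain $S_1=S_3$, $S_2\subseteq S_3$, $S_1\subseteq S_2$ is the natural strategy and each step is essentially correct.

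Two remarks to sharpen the write-up. First, the step you flag as the ``main obstacle'' is in fact trivial: if $\lim_{q\to 0}\tilde T(aq^\sw,z)$ were independent of $a$, then conjugating by the $z$-diagonal matrix $Z$ and passing to $z\to 0_\fD$ would still yield something independent of $a$, since neither operation can manufacture $a$-dependence. The contrapositive is exactly what you need. The point you actually pass over, and which deserves a sentence, is why $\sX^G\neq\sX^\sA$ forces the normalized matrix $A(a,\hbar)$ to have an off-diagonal entry genuinely depending on $a$ (not merely on $\hbar$). One way to see this: a positive-dimensional component of $\sX^G$ contains two $\sA$-fixed points $p>r$ joined by an $\sA$-invariant rational curve, and the restriction $\Stab^K_{\sX^G}(p)|_r$ then carries the nontrivial $\sA$-weight of that curve in its denominator after normalization. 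Second, for $S_2\subseteq S_3$ you should say why every $\sA$-character appearing in a theta argument of $\tilde T_{pr}$ is a tangent weight at some fixed point; this follows from the degree constraint in the definition of elliptic stable envelopes (equivalently, from the abelianization construction for Nakajima varieties), and the paper's earlier remark that resonances of balanced sections lie on hyperplanes $\langle\lambda,\sw\rangle\in\Z$ for characters $\lambda$ actually occurring in the section then finishes the inclusion.
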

\begin{proof}
See \cite{KononovSmirnov1}.
\end{proof}

Resonances of $\{\tilde T_{pr}(a,z)\}_{p,r \in \sX^\sA}$ with respect to $z$-variables are called the walls:
\[
\Walls(\sX) = \Res_{z} \left(\{\tilde T_{pr}(a,z)\}_{p,r \in \sX^\sA}\right).
\]
Unfortunately, the walls do not have such a simple description in terms of the geometry of $\sX$, but they do have a description in terms of the symplectic dual variety $\sX^!$, see \ref{res_walls_duality}.

\section{Application: Hilbert scheme}

For an introduction to Nakajima quiver varieties and the Hilbert scheme of $n$ points on $\C^2$, see chapter \ref{explicitexamples}. Set-theoretically, the variety $\Hilb(\C^2, n)$ is the set of ideals $I \subset \C[x,y]$ of codimension $n$. There is an action of two-dimensional torus $\sT = (\C^\times)^2_{t_1t_2}$ via
\[
x \mapsto t_1^{-1} x, \ \ y \mapsto t_2^{-1} y.
\]
The torus $\sT$ has a factorization
\[
\sT = \sA_a \times \C^\times_\hbar, \ \ \ t_1 = a\sqrt\hbar, \  t_2 = a^{-1} \sqrt\hbar.
\]
The fixed points correspond to monomial ideals, and can be identified with Young diagrams with $n$ boxes as illustrated in Figure \ref{FP}.

\begin{figure}[h]
    \centering
    \begin{tikzpicture} [scale=1.0]
    \draw [very thick,  <->] (6,0) -- (0,0) -- (0,-6);
    \draw [thick, fill=yellow!10] (5,0)--(5,-1)--(3,-1)--(3,-2)--(2,-2)--(2,-4)--(1,-4)--(1,-5)--(0,-5)--(0,0)--cycle ;
    \draw (0,-1)--(3,-1);
    \draw(0,-2)--(2,-2);
    \draw(0,-3)--(2,-3);
    \draw(0,-4)--(2,-4);
    \draw(1,0)--(1,-4);
    \draw(2,0)--(2,-2);
    \draw(3,0)--(3,-1);
    \draw(4,0)--(4,-1);
    \node at (0.5,-0.5) {1};
    \node at (1.5,-0.5) {$x$};
    \node at (2.5,-0.5) {$x^2$};
    \node at (3.5,-0.5) {$x^3$};
    \node at (4.5,-0.5) {$x^4$};
    \node at (0.5,-1.5) {$y$};
    \node at (1.5,-1.5) {$xy$};
    \node at (2.5,-1.5) {$x^2y$};
    \node at (0.5,-2.5) {$y^2$};
    \node at (1.5,-2.5) {$xy^2$};
    \node at (0.5,-3.5) {$y^3$};
    \node at (1.5,-3.5) {$xy^3$};
    \node at (0.5,-4.5) {$y^4$};
    \end{tikzpicture}
    \caption{A fixed points of $\Hilb(\C^2)$. The diagram corresponds to the ideal generated by $x^5, x^3y, x^2y^2, xy^4, y^5$.}
    \label{FP}
\end{figure}
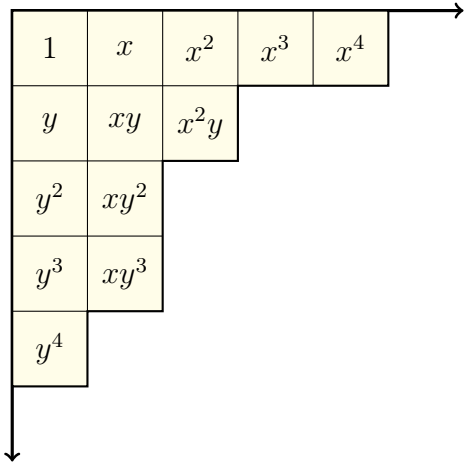

Let us describe the equivariant roots of the Hilbert scheme. For a Young diagram $\lambda$, the tangent space has the following description
\[
T_\lambda = \sum_{\square\in \lambda} (
t_1^{a(\square)+1} t_2^{-l(\square)} + t_1^{-a(\square)} t_2^{l(\square)+1}
),
\]
where $a(\square), l(\square)$ are the arm and leg lengths of the box, and the hook length is defined as
\[
hook(\square) = a(\square) + 1 + l(\square).
\]
Restricting to the subtorus $\sA \subset \sT$,
we get
\[
T_\lambda = \sum_{\square \in \lambda} (a^{hook(\square)} + a^{-hook(\square)})
\]
Thus, if we identify $\Lie_\sR(\sK) \cong \R$ the equivariant roots are
\[
\left\{\frac{a}{b} \in \Q: |b|<n\right\}
\]
For these values for $\sw$, the limit $\lim\limits_{q\to 0} \tilde T_{\lambda\mu}(aq^\sw, z)$ is nontrivial and is conjugate to the stable envelopes for the $\nu_b$-fixed subvariety for the cyclic subgroup
\[
\nu_b = \{w^k, k=0, ..., b-1\} \subset \sA \cong \C^\times.
\]

The $\nu_b$-fixed subvariety of $\sX$ corresponds to the cyclic quiver with $b$ vertices shown in Figure \ref{cyclic_quiver}. Indeed: on the space associated with the vertex, there should be a $\Z/b\Z$-grading, compatible with the arrow morphisms, see detailed explanation in \cite{NakajimaLectures1, NakajimaLectures2}.

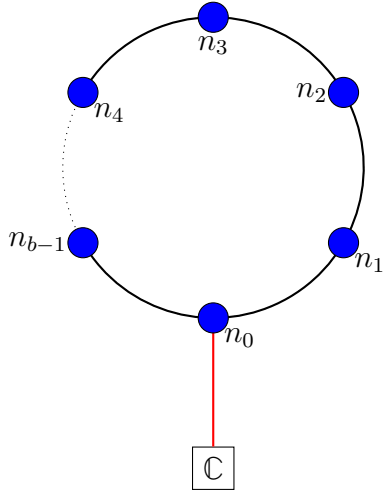
\begin{figure}[h]
    \centering
    \begin{tikzpicture}[scale=2]
    \draw [dotted] (0,0) circle(1);
    \node at (0,-2) [rectangle,draw] (C) {$\C$};
    \draw [red, thick, ->] (C) -- (0,-1);
    \draw [thick] (-.866,-.5) arc (210:510:1);
    \draw [fill=blue] (0,-1) circle(0.1) node[anchor=north west] {$n_0$};
    \draw [fill=blue] (0,1) circle(0.1) node[below=3pt ] {$n_3$};
    \draw [fill=blue] (-.866,-.5) circle(0.1) node[left=2pt] {$n_{b-1}$};
    \draw [fill=blue] (-.866,.5) circle(0.1) node[anchor=north west] {$n_4$};
    \draw [fill=blue] (.866,-.5) circle(0.1) node[anchor=north west] {$n_1$};
    \draw [fill=blue] (.866,.5) circle(0.1) node[left=2pt] {$n_2$};
    \end{tikzpicture}
    \caption{The cyclic quiver with $b$ vertices of dimensions $n_0, ..., n_{b-1}$ and 1-dimensional framing.}
    \label{cyclic_quiver}
\end{figure}

For the Hilbert scheme there are two infinitesimal slopes $\fD_+$ and $\fD_-$. The K-theoretic stable envelopes for these slopes are the same (surprisingly, there is no transition at 0). The slopes $\fD_{\pm}$ correspond to two distinguished infinitesimal slopes for $X^{\nu_b}$, and there is non-trivial transition between them.
Let $K^\pm(a, \hbar)$ be the corresponding K-theoretic stable envelopes for $X^{\nu_b}$, normalized as in \ref{anormal}.
Then our result \ref{limitnormal} in this particular case implies the following theorem:
\begin{theorem}
Let $\sw = a/b \in \Q$, where (a,b) are relatively coprime integers. Then
\[
\lim_{z\to 0} Z \left(
\lim_{q\to 0} \tilde T(aq^\sw, z)
\right) Z^{-1} = H \tilde K^+(a,\hbar) H^{-1},
\]
\[
\lim_{z\to \infty} Z \left(
\lim_{q\to 0} \tilde T(aq^\sw, z)
\right) Z^{-1} = H \tilde K^-(a,\hbar) H^{-1},
\]
where $Z$, $H$ are certain diagonal matrices in $z$ and $\hbar$, respectively (in the fixed point bases).
\end{theorem}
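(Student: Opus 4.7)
The plan is to reduce the statement directly to Theorem \ref{limitnormal} applied to the Nakajima quiver variety $\sX = \Hilb(\C^2, n)$ with the equivariant cocharacter $\sw = a/b \in \Lie_\R(\sA)$, where $\sA \cong \C^\times$. First I would identify the cyclic subgroup $G = \langle e^{2\pi i \sw}\rangle = \nu_b \subset \sA$ determined by $\sw$: since $\gcd(a,b)=1$, this is precisely the cyclic group of order $b$, and the associated fixed subvariety $\sX^{\nu_b}$ is the Nakajima quiver variety for the cyclic $\widehat{A}_{b-1}$ quiver shown in Figure \ref{cyclic_quiver}, with framing and dimension vector determined by the decomposition of the vector space at each fixed point under the $\nu_b$-action. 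Both $\sX$ and $\sX^{\nu_b}$ have the same set of $\sA$-fixed points (indexed by Young diagrams of size $n$), so the matrix $\tilde T$ and the matrices $\tilde K^\pm$ are both indexed by the same set.

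Next I would invoke Theorem \ref{limitnormal} verbatim: for any infinitesimal K\"ahler chamber $\fD$ of $\sX$ one obtains
\[
\lim_{z\to 0_\fD} Z \Bigl(\lim_{q\to 0} \tilde T(aq^\sw, z)\Bigr) Z^{-1} = H \tilde A_\fD(a,\hbar) H^{-1},
\]
where $\tilde A_\fD$ is the normalized K-theoretic stable envelope of $\sX^{\nu_b}$ with infinitesimal slope in the chamber on $\sX^{\nu_b}$ induced from $\fD$ via the restriction map $\Pic(\sX) \to \Pic(\sX^{\nu_b})$. The statement to prove then amounts to matching the two limits $z\to 0$ and $z\to \infty$ with the two slope chambers $\fD_+$ and $\fD_-$ of $\sX$, and showing that the induced infinitesimal chambers on $\sX^{\nu_b}$ are the two canonical chambers for which the quiver-variety stable envelopes are $\tilde K^+$ and $\tilde K^-$.

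The identification of chambers is the main technical point. For $\sX = \Hilb(\C^2, n)$ the K\"ahler torus $\sK$ is one-dimensional, so $\Lie_\R(\sK) \cong \R$ has exactly two infinitesimal chambers $\fD_\pm$, which are the positive and negative rays. Under the map $\Pic(\sX) \to \Pic(\sX^{\nu_b})$ the ample generator $\O(1)$ pulls back to an explicit combination of the tautological bundles associated to the vertices of the cyclic quiver; this combination lies in a single chamber of $\Lie_\R(\Pic(\sX^{\nu_b}))$, distinguished among the many chambers of the dual variety, and sending the shift $z$ to $0$ versus $\infty$ along this line realizes the two sides of the corresponding hyperplane through the origin. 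I would verify by direct inspection of the induced slope that $\fD_+$ maps to the chamber defining $\tilde K^+$ and $\fD_-$ to the one defining $\tilde K^-$.

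Finally I would check that the diagonal matrices $Z$ and $H$ in Theorem \ref{limitnormal} reduce to the ones in the statement: the entries $z^{\chi_p(\sw,-)}$ and $\hbar^{-m_p(\sw)/2}$ depend only on $\sw$ and on the weight data at each fixed point $p = \lambda$, which is encoded by the hook-length formula for $T_\lambda$ recalled before Figure \ref{cyclic_quiver}, so both are explicit monomials in $z$ and $\hbar$ respectively (with signs). The only real obstacle is the chamber-matching step; once that is pinned down, the rest is a direct specialization of Theorem \ref{limitnormal}.
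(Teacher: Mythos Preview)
Your proposal is correct and follows exactly the paper's approach: the paper presents this theorem as an immediate specialization of Theorem~\ref{limitnormal} to $\sX = \Hilb(\C^2,n)$, with the preceding paragraph identifying $G = \nu_b$ and the two infinitesimal chambers $\fD_\pm$. One minor simplification: the paper \emph{defines} $K^\pm$ to be the K-theoretic stable envelopes of $\sX^{\nu_b}$ for the slopes induced from $\fD_\pm$, so the chamber-matching step you flag as ``the only real obstacle'' is in fact true by definition and requires no verification.
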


However, there are more walls passing through 0 for $X^{\nu_b}$.

According to \cite{SchifVas}, on the K-theory of the Hilbert scheme $\sX = \Hilb(\C^2)$, there is an action of the quantum toroidal algebra $\cU_\hbar(\widehat{\widehat{\mathfrak{gl}}}_1)$, also known as the Ding-Iohara-Miki algebra.
On the K-theory of $\sX^{\nu_b}$ there is an action of the algebra $\cU_\hbar(\widehat{\widehat{\mathfrak{gl}}}_b)$, and there is a beautiful interplay between them for different $b$. The results of the next chapter can be viewed as an attempt to understand this deep phenomenon in a more general framework.

\chapter{3d Mirror Symmetry} \label{3dmir}

\section{Data of 3d mirror symmetry}

Suppose we have two symplectic resolutions of singularities $\sX$ and $\sX^!$. The geometric objects related to each of them are shown in the following table:

\begin{figure}[h]
\begin{center}
\begin{tabular}{|c|c|c|}
\hline 
variety & $\sX$ & $\sX^!$ \\ 
\hline
{\bf Fundamental data:} & & \\
\hline 
torus & $\sT$ & $\sT^!$ \\ 
\hline 
equivariant torus & $\sA$ & $\sA^!$ \\ 
\hline
K\"ahler torus & $\sK$ & $\sK^!$ \\ 
\hline
weight of the symplectic form & $\hbar$ & $\hbar^!$ \\ 
\hline
stability parameter & $\theta$ & $\theta^!$ \\ 
\hline 
polarization & $T^{\sX, 1/2}$ & $T^{\sX^!, 1/2}$ \\ 
\hline
{\bf Additional data:} & & \\
\hline
Equivariant chamber & $\sigma $ & $\sigma^!$ \\
\hline
Elliptic stable envelopes & $\Stab_\sigma^{\sX} $ & $\Stab_{\sigma^!}^{\sX^!}$ \\
\hline
Quantum difference operators & $M^\sX(a,z)$ & $M^{\sX^!}(a,z)$ \\
\hline 
Shift operators & $S^\sX(a,z)$ & $S^{\sX^!}(a,z)$ \\
\hline 
\end{tabular}
\end{center}
\caption{Attributes of symplectic dual varieties $\sX$ and $\sX^!$.}
\end{figure}

{\it After we choose this data, we do not have freedom to choose equivariant chambers. They are canonically constructed from $\theta$ and $\theta^!$.}

For simplicity, we assume that the sets of fixed points of both varieties are finite:
\[
|\sX^\sA|, |(\sX^!)^{\sA^!}| < \infty.
\]

\section{Definition of 3d mirror symmetry}
The varieties $\sX$ and $\sX^!$ are said to be dual with respect to 3d mirror symmetry if the following three conditions are satisfied.
\begin{enumerate}
\item {\it There are isomorphisms
\[
\kappa: \sA \to \sK^!, \ \ \sK \to \sA^!, \ \ \C_\hbar^\times \to \C_{\hbar^!}^\times.
\]
In other words, 3d mirror symmetry exchanges equivariant and K\"ahler parameters.
}

\hspace{1cm}
Under these isomorphism, we can construct canonical elements in the equivariant Lie algebras, corresponding to stability parameters:
\[
\sigma = (d\kappa)^{-1}(\theta^!) \in \Lie_\R(\sA), \ \ \sigma^! = (d\kappa)(\theta) \in \Lie_\R(\sA^!).
\]
The cocharacters $\sigma, \sigma^!$ define decompositions of tangent spaces as fixed points into attracting and repelling subspaces.
As usual, we define attracting subsets and orderings on the sets of fixed points for $\sX$, $\sX^!$.
At least, in the case of Nakajima quiver varieties, there are elliptic stable envelopes
\[
\Stab_\sigma^\sX  \ \ \text{and} \ \ \Stab_{\sigma^!}^{\sX^!}
\]
associated with $(\sX, \sigma, T^{\sX, 1/2})$ and $(\sX^!, \sigma^!, T^{\sX^!, 1/2})$,
but we expect that they can be constructed in higher generality (for example, bow varieties \cite{NakajimaBows}).

\item {\it There is a bijection between the sets of fixed points
\[
\sX^\sA \to (\sX^!)^{\sA^!}, \ \ p \to p^!,
\]
which reverses the partial ordering.
}

Before we formulate the third condition, let us introduce a different normalization of elliptic stable envelopes which will allow us to achieve more symmetry
\[
\Stabb^\sX_\sigma(p) = \vartheta(N_{p^!}^-)\cdot \Stab^\sX_\sigma(p).
\]
Explicitly,
\[
\vartheta(N_{p^!}^-) = \prod_{w \in T_{p^!} \sX^!: \sigma^!(w)<0} \vartheta(a^w)
\]
is the product of $T^!$-characters of the tangent space of $\sX^!$ at $p^!$ which have negative pairings with the cocharacter $\sigma^!$. 
Similarly, we do the same for the dual side
\[
\Stabb^{\sX^!}_{\sigma^!}(p^!) = \vartheta(N_{p}^-)\cdot \Stab^{\sX^!}_{\sigma^!}(p^!).
\]
Note that each of $\Stabb^\sX$ and $\Stabb^{\sX^!}$ requires knowing of geometry of both varieties $\sX$ and $\sX^!$.
This definition is motivated by the observation that is makes them symmetric on the diagonal:
\[
\left.\Stabb^{\sX}_{\sigma}(p)\right|_{p}
=
\left.\Stabb^{\sX^!}_{\sigma^!}(p^!)\right|_{p^!} = \vartheta(N_p^-) \cdot \vartheta(N_{p^!}^-).
\]

The third condition for the 3d mirror symmetry is that the elliptic stable envelopes glue not only on the diagonal, but globally to an elliptic class on $\sX \times \sX^!$.

This next axiom is the most important and has been studied in \cite{AOprep, AOF}›
\item {\it
There exists a bundle $\mathfrak{M}$ over $\Ell_{\sT\times \sT^!}{\sX \times \sX^!}$ and a section $\mathfrak{m}$ such that
\[
\mathfrak{m}|_{p!} = \Stabb_\sigma^X(p), \ \mathfrak{m}|_{p} = \Stabb_{\sigma^!}^{X^!}(p!).
\]
}
\end{enumerate}

Let us consider the matrix of normalized elliptic stable envelopes,
\[
\tilde T^{\sX}_{pr} = \frac{\Stab_\sigma^\sX(p)|_r}{\vartheta(N_r^-)} = \frac{\Stabb_\sigma^\sX(p)|_r}{\vartheta(N_r^-) \vartheta(N_{p^!}^-)},
\]
and similarly $\tilde T^{\sX^!}_{pr}$ so that
\[
\tilde T_{pp} = 1, \ \ \tilde T^{\sX^!}_{pp} = 1. 
\]
Then, as a corollary we get
\[
\tilde T^\sX_{pr} = \kappa^* \tilde T^{\sX^!}_{r^! p^!},
\]
which means that elliptic stable envelopes for $\sX$ and $\sX^!$ coincide after identification by $\kappa$ which exchanges equivariant and K\"ahler variables. Now it is clear why we require that the bijection $\cdot^!$ reverses the orderings of fixed points: $\tilde T_{pr}^\sX$ can be nonzero only for $p>r$, while $\tilde T_{r^! p^!}^{\sX^!}$ only for $r^!>p^!$.

Note that the matrix elements $\tilde T_{pr}^{\sX}$ are balanced over $a$ as well as over $z$, which allows us consider various limits with shifts of $a$ and $z$ on equal footing. From the definition of elliptic stable envelopes, it follows that for any $\sv \in cochar(\sK)$ and $\sw \in cochar(\sA)$ we have
\[
\tilde T_{pr}^{\sX} (a, zq^\sv) = a^{\chi_p(-, \sv) - \chi_r(-, \sv)} T_{pr}^{\sX}(z,a)
\]
\[
\tilde T_{pr}^{\sX} (aq^\sw, z) = z^{\chi_p(\sw, -) - \chi_r(\sw, -)} T_{pr}^{\sX}(z,a)
\]
In other words, shifting $z$ by an integral cocharacter corresponds to a conjugation by a diagonal matrix depending on $a$ related to the classical multiplication by a line bundle. Shift of $a$ by an integral cocharacter corresponds to a conjugation by a diagonal matrix depending on $z$, which is related to the classical multiplication for the dual variety.

\section{Duality of walls and resonances} \label{res_walls_duality}

For elliptic stable envelopes, non-trivial limits depending both on equivariant and K\"ahler parameters appear if either the equivariant parameters are shifted by the elements on the resonant locus, or  the K\"ahler parameters are shifted by the elements on the walls. We have the following duality between them
\begin{proposition}
Under the isomorphism $\kappa$, 3d mirror symmetry switches the resonances with the walls:
\[
\Res(\sX) = \Walls(\sX^!), \ \ \Res(\sX^!) = \Walls(\sX).
\]
\end{proposition}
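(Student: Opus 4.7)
The plan is to combine the characterization of $\Res(\sX)$ as the $a$-resonant locus of the normalized stable envelope matrix, given by the previous proposition, with the 3d mirror identity
\[
\tilde T^\sX_{pr}(a,z) \;=\; \kappa^*\,\tilde T^{\sX^!}_{r^!p^!}(a,z)
\]
derived from axiom (3). Once these are in place, the statement reduces to a bookkeeping exercise about which group of variables carries the shift.

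First I would record the relevant definitions in parallel. By the proposition in the section on resonances of elliptic stable envelopes, $\Res(\sX)=\Res_a\!\left(\{\tilde T^\sX_{pr}\}_{p,r\in\sX^\sA}\right)$, and by definition $\Walls(\sX^!)=\Res_z\!\left(\{\tilde T^{\sX^!}_{qs}\}_{q,s\in(\sX^!)^{\sA^!}}\right)$. Thus both sides of the first claimed equality are read off from families of matrix entries of elliptic stable envelopes, the only difference being whether the equivariant or the K\"ahler argument is being shifted.

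Next I would substitute the mirror identity above into the family defining $\Res(\sX)$. Since $\cdot^!$ is a bijection of fixed-point sets, the reindexing $(p,r)\mapsto(r^!,p^!)$ is a bijection, so the collections $\{\tilde T^\sX_{pr}\}_{p,r}$ and $\{\kappa^*\tilde T^{\sX^!}_{r^!p^!}\}_{p,r}$ coincide as \emph{unordered} families of sections, and the resonance locus of a family depends only on that unordered collection. Finally, because $\kappa$ exchanges $\sA$ with $\sK^!$, shifting the $a$-argument on the $\sX$-side by a cocharacter $\sw\in\Lie_\R(\sA)$ pulls back, under $\kappa^*$, to shifting the $z$-argument on the $\sX^!$-side by $d\kappa(\sw)\in\Lie_\R(\sK^!)$. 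Hence $\sw$ is an $a$-resonance of $\{\tilde T^\sX_{pr}\}$ iff $d\kappa(\sw)$ is a $z$-resonance of $\{\tilde T^{\sX^!}_{qs}\}$, which gives $\Res(\sX)=\kappa^{-1}(\Walls(\sX^!))$ after identification. The second equality is obtained by interchanging the roles of $\sX$ and $\sX^!$.

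The only point requiring care is verifying that the reindexing by $\cdot^!$ preserves the union of resonance hyperplanes; this is immediate from the unordered definition of $\Res$ for a collection of sections. There is no substantive obstacle here: the proposition is essentially a translation, to the level of resonance data, of the symmetry of the glued elliptic section $\mathfrak{m}$ on $\sX\times\sX^!$, with the $\kappa$-exchange of equivariant and K\"ahler tori swapping the two types of resonant shifts.
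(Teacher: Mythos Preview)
Your argument is correct and is precisely the intended one. The paper does not spell out a proof of this proposition, but the reasoning it implicitly relies on is exactly what you wrote: combine the identification $\Res(\sX)=\Res_a\!\left(\{\tilde T^\sX_{pr}\}\right)$ from the earlier proposition with the definition $\Walls(\sX^!)=\Res_z\!\left(\{\tilde T^{\sX^!}_{qs}\}\right)$ and the mirror identity $\tilde T^\sX_{pr}=\kappa^*\tilde T^{\sX^!}_{r^!p^!}$, noting that $\kappa$ interchanges the equivariant and K\"ahler groups of variables.
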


Note that the limit of elliptic stable envelopes to a wall can be dependent on K\"ahler parameters, however, the corresponding wall operator can be trivial. This is related with the fact that the corresponding matrix has the same limits as $z\to 0, \infty$. The simplest example of such phenomenon is the Hilbert scheme for which K-theoretic stable envelopes do not change of the K\"ahler parameter crosses integer walls.

\section{K-theoretic limits}

The result \cite{AOElliptic} of M.Aganagic and A.Okounkov, which was already used in the proof of \ref{mainequiv}, can be summarized as the following theorem:
\begin{theorem} For $\sw \in H^2(\sX, \R)$
\begin{enumerate}
\item The limit
$\lim\limits_{q\to 0} T_{pr}(zq^\sw, a)$ is a locally constant function on the complement to a certain hyperplane arrangement
\[
\Walls(\sX) \subset H^2(\sX, \R).
\]
\item For $\sw \in H^2(\sX, \R) \setminus \Walls(\sX)$, the limit equals
\[
\lim_{q\to 0} T_{pr}(zq^\sw, a) = A^{[\sw], X}_{pr},
\]
where $A^{[\sw], X}_{pr}$ is the matrix of K-theoretic stable envelopes with the slope $\sw$:
\[
A^{[\sw], X}_{pr} = \frac{\Stab^{\sw, X, K}_\sigma(p)|_r}{\Stab^{\sw, X, K}_\sigma(r)|_r}
\]
\end{enumerate}
\end{theorem}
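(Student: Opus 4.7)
The plan is to verify that the limit, when it exists and is $z$-independent, satisfies the three defining axioms of the K-theoretic stable envelope of slope $\sw$, and then invoke uniqueness of such envelopes. The argument runs parallel to Theorem \ref{mainequiv}, with the roles of equivariant and K\"ahler parameters exchanged, in accordance with the duality of walls and resonances discussed in Section \ref{res_walls_duality}.

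First I would establish existence and local constancy of the limit. The normalization $T_{pr} = \Stab^\sX_\sigma(p)|_r / \vartheta(N_r^-)$ forces $T_{pp} = 1$, so the matrix entries have degree zero in $z$. Applying the idea behind Proposition \ref{twopol} to the two natural polarizations of $\sX$ restricted to fixed points, one sees that the entries are in fact balanced in $z$, not merely numerically balanced. The general proposition characterizing balanced sections then implies that $\lim_{q\to 0} T_{pr}(zq^\sw, a)$ exists for every $\sw$ and is locally constant on the complement of a $cochar(\sK)$-periodic hyperplane arrangement, which one defines to be $\Walls(\sX)$ (this matches the definition used in Section \ref{walls_def}).

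Away from $\Walls(\sX)$, I would check the three K-theoretic stable envelope axioms entry by entry. Triangularity of the support is inherited from the fact that $\Stab^\sX_\sigma(p)$ is supported in $\Attr^f(p)$, a property preserved under the limit. The diagonal normalization is inherited as well: since $T_{pp}=1$, after restoring the factor $\vartheta(N_r^-) \to \ahat(N_r^-)$ one recovers exactly the prescribed prefactor $\sqrt{\det N_p^-/\det N_p^{1/2}} \cdot \bigwedge^\bullet(N_p^{-,\vee})$ on the diagonal. The remaining axiom is the window (Newton polytope) inclusion. The key input is the $z$-quasiperiodicity of the normalized stable envelope,
\[
T_{pr}(zq^\sv, a) = a^{\chi_p(-,\sv) - \chi_r(-,\sv)}\, T_{pr}(z,a),\qquad \sv \in cochar(\sK),
\]
which is dictated by the definition of elliptic stable envelopes. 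Applying the asymptotics \ref{thetaasympt} factorwise to the shifted entries $T_{pr}(zq^\sw, a)$ and tracking the floor-function contributions, the limit is an $a$-Laurent polynomial whose Newton polytope is shifted by $\chi_p(-,\sw) - \chi_r(-,\sw)$ relative to the diagonal component, which is precisely the window condition characterizing the slope-$\sw$ K-theoretic stable envelope.

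The main obstacle is this last step. One has to show that the shift produced by the floor-function parts of $\chi_p(-,\sw) - \chi_r(-,\sw)$ does not merely translate the $a$-support but places it strictly inside the slope-$\sw$ window. This ultimately reduces to the original inequality built into the degree of the elliptic stable envelope as a section of its line bundle over $\sE_\sT(\sX)$, and requires careful bookkeeping of how fractional K\"ahler shifts convert to integer-part $a$-character shifts as $q\to 0$. Once the window inclusion is established, uniqueness of K-theoretic stable envelopes of slope $\sw$ identifies the limit with $A^{[\sw],X}_{pr}$, completing both parts of the theorem.
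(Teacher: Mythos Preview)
The paper does not prove this theorem itself; it attributes the result to Aganagic--Okounkov \cite{AOElliptic} and merely restates it. So there is no ``paper's own proof'' to compare against. That said, your outline is essentially the argument of \cite{AOElliptic}, and it mirrors exactly the strategy the paper uses for the companion Theorem \ref{mainequiv} (with equivariant and K\"ahler roles swapped): verify support, diagonal normalization, and the window condition, then invoke uniqueness. The paper even points to ``proposition 4.3 of \cite{AOElliptic}'' for the window-inclusion step inside the proof of Theorem \ref{mainequiv}, which is precisely the step you flag as the main obstacle.

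One small correction: your justification for balance in $z$ is misplaced. Proposition \ref{twopol} concerns balance in the \emph{equivariant} variables $a$ and comes from comparing two polarizations of the tangent bundle; it says nothing about the K\"ahler direction. Balance in $z$ instead comes from the explicit form of the universal bundle $\cU$ (see \S\ref{universaldefined}): its fixed-point components are products of factors $\vartheta(a^{\lambda_i} z_i)/(\vartheta(a^{\lambda_i})\vartheta(z_i))$, each of which is manifestly balanced in $z$ by the characterization of balanced sections. With that repair, your sketch matches the intended argument.
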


In other words, the space $\Lie_\R(\sK)$ is divided into chambers, and the limit is different for each chamber. 
It is very natural to ask what the limits into walls are. It turns out that such limits depend on the K\"ahler paramters as well, and this dependence interpolates the transition between chambers: the limits at chambers can be obtained as further limits of the wall limit.

The answer can be formulated elegantly if we compare a limit to a non-regular slope with  limits to regular slopes in a small neighborhood, as illustrated in Figure \ref{walls}.

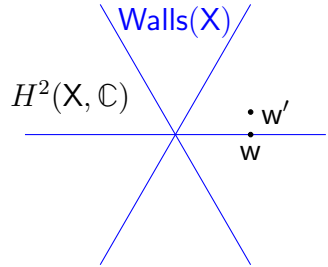
\begin{figure}[h]
\begin{center}
\begin{tikzpicture}
\draw[blue] (-1,-1.732) -- (1,1.732);
\draw[blue] (1,-1.732) -- (-1,1.732);
\draw[blue] (-2,0) -- (2,0);
\draw [fill=black] (1,0) circle(0.03) node[anchor=north] {$\sw$};
\draw [fill=black] (1,0.3) circle(0.03) node[anchor=west] {$\sw'$};
\draw [blue] (0,1.5) node {$\Walls(\sX)$};
\node at (-1.4,.5) {$H^2(\sX, \C)$};
\end{tikzpicture}
\end{center}
\caption{The theorem relates the limits of the elliptic stable envelope to non-regular slope $\sw$ and regular slope $\sw'$: the transition between them is given by a matrix which depends non-trivially on the K\"ahler parameters}
\label{walls}
\end{figure}

\begin{theorem} \label{mainfactorization}
Let $\sw \in H^2(\sX, \R)$ and $\sw' \not \in \Walls(\sX)$ be a regular slope in an infinitesimal neighbourhood of $\sw$. Then the limit has a factorization
\[
\lim_{q\to 0} T(zq^\sw, a) = Z \cdot  A^{[\sw'], X}
\]
Here $Z$ is a matrix which depends significantly only on K\"ahler variables, namely
\[
Z = \L_{\sw} Z'(z, \hbar) \L_{\sw}^{-1},
\]
where $\L_{\sw}$ is the operator of classical multiplication by a fractional line bundle corresponding to
\[
\sw \in H^2(\sX, \R) \cong \Pic(\sX) \otimes \R.
\]
\end{theorem}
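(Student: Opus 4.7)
The plan is to extract the factorization by combining the asymptotic analysis of theta functions with the characterization of elliptic stable envelopes. I would write $\sw = \sw' + \varepsilon$ with $\sw'$ in the infinitesimally nearby regular chamber, so that the previously established theorem identifies $\lim_{q\to 0} T(zq^{\sw'}, a) = A^{[\sw'], X}$. The task is then to understand how the limit changes when $\sw'$ is replaced by the resonant $\sw$, i.e.\ when some K\"ahler weights become integer-paired with the shift.

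Each matrix element of $\tilde T(z, a)$ is balanced in both $a$ and $z$, so applying the asymptotic formula \ref{thetaasympt} to each theta factor $\vartheta(z^\mu a^\lambda c)$ appearing in a matrix element, those factors with $\inner{\mu}{\sw} \in \Z$ produce non-monomial limits of the form $(1 - z^\mu a^\lambda c)$ up to monomial prefactors, while all other factors degenerate to monomials. The non-integer-paired factors, together with the Newton polytope/window conditions defining the elliptic stable envelope, reassemble into the K-theoretic stable envelope for the chamber of $\sw'$: since the chamber-dependent characterization is exactly what the generic asymptotics encode, this piece must coincide with $A^{[\sw'], X}$. The remaining integer-paired factors collect into the wall matrix $Z$, with monomial $a$-prefactors of the form $a^{\chi_p(-,\sw) - \chi_r(-,\sw)}$ at the $(p,r)$ entry; this is precisely the matrix of a conjugation by the diagonal operator $\L_\sw$ representing classical multiplication by the fractional line bundle associated to $\sw \in H^2(\sX,\R) \cong \Pic(\sX)\otimes \R$, yielding $Z = \L_\sw Z'(z, \hbar) \L_\sw^{-1}$ with $Z'$ independent of $a$.

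The main obstacle is verifying that the integer-paired contributions assemble into a coherent matrix $Z'$ with the asserted structure, and that the separation of the asymptotic into ``wall'' and ``regular'' pieces is compatible with the support and window conditions defining elliptic stable envelopes. A cleaner route, when the symplectic dual $\sX^!$ is available, is to invoke 3d mirror symmetry: the identity $\tilde T^\sX_{pr}(z,a) = \kappa^* \tilde T^{\sX^!}_{r^! p^!}(a,z)$ transforms the $z$-shift on $\sX$ into an equivariant shift on $\sX^!$, whereupon theorem \ref{mainequiv} and theorem \ref{limitnormal} apply directly: they identify $Z'$ with the K-theoretic stable envelopes of $(\sX^!)^G$ for the cyclic subgroup $G = \langle e^{2\pi i \sw}\rangle$, and the conjugation by $\L_\sw$ with the diagonal factors $Z$ and $H$ appearing in theorem \ref{limitnormal}. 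In this formulation, the factorization $Z \cdot A^{[\sw'], X}$ arises as the composition of a ``wall crossing'' on $\sX^!$ (the cyclic-fixed-subvariety stable envelope) with the K-theoretic stable envelope for the infinitesimal chamber of $\sw'$.
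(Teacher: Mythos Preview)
Your first approach --- separating theta factors according to whether $\langle\mu,\sw\rangle$ is integral and asserting that the ``non-integral'' piece reassembles into $A^{[\sw'],\sX}$ while the ``integral'' piece gives $Z$ --- is only heuristic, and you yourself flag the obstacle: there is no mechanism in your argument that forces the non-integral factors to satisfy the support and window conditions characterizing the K-theoretic stable envelope, nor that the leftover factors assemble into a matrix which is \emph{monomial} in $a$. A matrix element of $\tilde T$ is a sum of products of theta functions, and after taking asymptotics the non-monomial pieces from different summands interact; you cannot simply read off a factorization $Z\cdot A^{[\sw'],\sX}$ term by term.

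Your second approach via mirror symmetry is logically circular in the paper's development. Theorem~\ref{mainequiv} concerns equivariant shifts and involves a further limit $z\to 0_\fD$; it does not produce the factorization $Z\cdot A^{[\sw'],\sX}$ with full $z$-dependence retained. In the paper that identification is Theorem~\ref{mainfactorizationexplicit}, whose proof \emph{uses} Theorem~\ref{mainfactorization} together with the limit $a\to 0$ to isolate $Z'$. Invoking it here assumes what is to be proved, and in any case requires the dual $\sX^!$ to exist, which the theorem does not assume.

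The paper's argument avoids both difficulties by a rigidity trick you are missing. One \emph{defines}
\[
Z'' = \Big(\lim_{q\to 0} T(zq^{\sw},a)\Big)\cdot\Big(\lim_{q\to 0} T(zq^{\sw'},a)\Big)^{-1}
\]
and observes that the second factor is $(A^{[\sw'],\sX})^{-1}$, which by \cite{pcmilect} is again an integral stable envelope (for the opposite chamber, slope, and polarization). Hence $Z''$ is an honest class in $\sK_\sT(\sX^\sA\times\sX^\sA)\otimes\Q(z)$. Since $\sX^\sA$ is finite, this forces every matrix entry $Z''_{pr}$ to be a \emph{monomial} in the equivariant variables. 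One then computes $Z''_{pr}$ by localization and applies the window conditions for both $\Gamma(p)$ and $\Stab_{-\sigma}^{[-\sw']}$; the diagonal contributions cancel via $\Stab_\sigma(l)|_l\cdot\Stab_{-\sigma}(l)|_l = \bigwedge^\bullet(T_l^\vee)$, and the bounds pin down the exponent of $a$ in $Z''_{pr}$ as $\chi_r(-,\sw)-\chi_p(-,\sw)$. This is exactly the statement $Z = \L_\sw Z'(z,\hbar)\L_\sw^{-1}$. The key step you are missing is the integrality of the inverse stable envelope, which converts the problem from ``analyze complicated asymptotics'' to ``a K-class on a finite set is a monomial.''
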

\begin{proof}
Consider the class $\Stab^{\sX, Ell}(p)$. The $q\to 0$ limit of $\Stab^{\sX, Ell}(p)$ shifted by $z \mapsto zq^\sw$ is an integral (with respect to equivariant paramters) K-theoretic class
\[
\Gamma(p) \in \sK_\sT(\sX) \otimes \Q(z).
\]
We know that for generic $\sw'$
\[
\lim_{q\to 0} T_{pr}(zq^{\sw'}, a) = A^{[\sw'], \sX}_{pr} = {\Stab_\sigma}^{[\sw'], \sX, K}(p)|_r
\]
is the K-theoretic stable envelope.
Consider the matrix
\[
Z'' = \left(
\lim_{q\to 0} T_{pr}(zq^\sw, a)
\right)
\left(
\lim_{q\to 0} T_{pr}(zq^{\sw'}, a)
\right)^{-1}, 
\]
By \cite{pcmilect} the correspondence inverse to the stable envelope is the stable envelope with the opposite slope, polarization and equivariant chamber. 
It follows that $Z''$ is an integral correspondence in
\[
\sK_\sT(\sX^\sA \times \sX^\sA) \otimes \Q(z),
\]
and thus its matrix elements are {\it monomials in equivariant parameters}.

By localization, the matrix elements of $Z''$ can be computed as
\[
Z_{pr}'' = \sum_{l \in \sX^{\sA}} \frac{\Stab_{-\sigma}^{[-\sw'], \sX, K}(r)|_l \cdot \Gamma(p)|_l}{\bigwedge^\bullet(T_l \sX^\vee)}
\]
It follows that $Z_{pr}''$ must be monomial in equivariant parameters.
The terms in the numerator satisfy the window conditions that the off-diagonal components of K-theoretic stable envelopes are contained in the diagonal ones after appropriate fractional shift
\[
\deg_\sA \Gamma(p)|_l + \chi_p(-, \sw) \subset \deg_\sA \Stab_\sigma(l)|_l + \chi_l(-, \sw), 
\]
\[
\deg_\sA \Stab_{-\sigma}^{[-\sw'],\sX, K}(r)|_l - \chi_r(-, \sw') \subset \Stab_{-\sigma}^{[-\sw'],\sX, K}(l)|_l -  \chi_l(-, \sw').
\]
For generic $\sw$ it is proved in \cite{AOElliptic}, but the proof uses only quasiperiods of elliptic stable envelopes and is valid for any $\sw$. 
Strictly speaking, the term $\Stab_\sigma(l)|_l$ is not defined since the slope $\sw$ is not generic, but it should be understood as the character prescribed by the normalization condition at the diagonal:
\[
\Stab_{\sigma, T^{1/2}}^{[\sw], \sX, K}(l)|_l = 
\sqrt{\frac{\det T_{l,<0}}
{\det T^{1/2}_l}}
\bigwedge\nolimits^\bullet (T_{l, <0}^\vee),
\]
and it does not depend on the slope $\sw$.
Then, since
\[
T_l = T_{l,<0} + T_{l, >0} = T^{1/2}_l + T^{1/2}_{l, opp},
\]
we have cancellation of the determinants, and
\[
\Stab_\sigma(l)|_l \cdot \Stab_{-\sigma}(l)|_l = \bigwedge\nolimits^\bullet(T_l^\vee).
\]
Then it follows that
\[
Z_{pr}'' \cdot a^{\chi_p(-, \sw) - \chi_l(-, \sw) - \chi_r(-, \sw') + \chi_l(-, \sw')}
\]
is bounded at any infinity of the equivariant torus $\sA$.
Since $Z_{pr}''$ is a monomial in equivariant variables, and $\sw'$ is arbitrary perturbation of $\sw$, it follows that
\[
Z_{pr}'' \cdot a^{\chi_p(-, \sw) - \chi_r(-, \sw)} \in \Q(z, \hbar)
\]
is a matrix independent of equivariant variables.
\end{proof}

Since the matrix coefficients of $Z_{pr}''$ should have integer exponents in equivariant variables, we obtain a corollary:
\begin{corollary}
Let $\sw$ be a slope on exactly one wall. Then
\[
Z_{pr}'' \not = 0 \Rightarrow \chi_p - \chi_r = [C]\otimes v \in H_2(\sX, \Z)_{eff} \otimes \sA^\wedge_{>0}.
\]
\end{corollary}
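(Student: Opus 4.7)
The first step is to unpack Theorem~\ref{mainfactorization}: since $\L_\sw$ acts diagonally in the fixed-point basis by $a^{\chi_p(-,\sw)}$, the factorization $Z = \L_\sw Z'(z,\hbar)\L_\sw^{-1}$ translates into
\[
Z''_{pr} \;=\; a^{(\chi_p-\chi_r)(-,\sw)}\, Z'_{pr}(z,\hbar).
\]
The theorem asserts this entry is an integer monomial in $a$; therefore non-vanishing of $Z''_{pr}$ is equivalent to $Z'_{pr}(z,\hbar)\neq 0$ together with the integrality condition $(\chi_p-\chi_r)(-,\sw)\in char(\sA)$.

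The key observation is that $Z'_{pr}(z,\hbar)$ depends only on the chamber configuration of $\sw$, not on $\sw$ itself. Since $\sw$ sits on exactly one wall $W$, there is a connected open neighborhood $U\subset W$ of $\sw$ throughout which no new walls are encountered; on $U$ the entry $Z''_{pr}$ remains nonzero, so the integrality condition $(\chi_p-\chi_r)(-,\sw')\in char(\sA)$ must hold for every $\sw'\in U$. The assignment $\sw'\mapsto (\chi_p-\chi_r)(-,\sw')$ extends to an $\R$-linear map $cochar(\sK)\otimes\R \to char(\sA)\otimes\R$, and its values on the connected set $U$ lie in the discrete lattice $char(\sA)$; continuity forces this map to be constant on $U$, hence to vanish on the codimension-one linear direction of $W$.

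Consequently $\chi_p-\chi_r$ has rank at most one as a linear map $cochar(\sK)\otimes\R \to char(\sA)\otimes\R$, and can be written as a pure tensor $\chi_p-\chi_r = v\otimes [C]$ with $[C]\in H_2(\sX,\Z)$ a primitive normal to $W$ and $v\in char(\sA)$. The signs of $[C]$ and $v$ are fixed simultaneously by the support constraint: $Z''$ inherits upper-triangularity from the elliptic stable envelope $T$, so $Z''_{pr}\neq 0$ for $p\neq r$ forces $p>r$ in the $\fC$-ordering; the resulting $a$-character $(\chi_p-\chi_r)(-,\sw) = \langle [C],\sw\rangle\cdot v$ must then be attracting with respect to any $\sigma\in\fC$. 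Choosing the sign of $[C]$ so that $[C]\in H_2(\sX,\Z)_{eff}$ — possible by the identification of walls of $\sX$ with effective curve classes, which goes through the dual description in Section~\ref{res_walls_duality} — yields the required positivity $v\in \sA^\wedge_{>0}$.

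The main technical input beyond Theorem~\ref{mainfactorization} is the geometric identification of the normal to a wall with an effective curve class. The rank-one deduction itself is elementary once one notes that a linear map with values in a discrete lattice on an open subset of a hyperplane must kill the direction of that hyperplane; the subsequent positivity/effectivity is pure bookkeeping of the ordering induced by the equivariant chamber $\fC$.
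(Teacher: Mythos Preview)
The paper does not actually prove this corollary: it simply remarks that ``the matrix coefficients of $Z_{pr}''$ should have integer exponents in equivariant variables'' and states the result. Your argument supplies the missing mechanism --- integrality of the exponent $(\chi_p-\chi_r)(-,\sw')$ on an open subset of the affine hyperplane $W$ forces the linear map $\sw'\mapsto(\chi_p-\chi_r)(-,\sw')$ to be constant there, hence to kill the tangent direction of $W$, hence $\chi_p-\chi_r$ has rank one as a tensor. This is the right idea and is what the paper is implicitly invoking.

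Two points to tighten. First, your claim that $Z'_{pr}$ is locally constant along $W$ needs the neighborhood $U$ to avoid not just other \emph{walls} but all hyperplanes $\{\langle\mu,\sw'\rangle\in\Z\}$ for every $z$-weight $\mu$ appearing in $\tilde T_{pr}$, since the floor functions in the asymptotics \eqref{thetaasympt} jump there even when such a hyperplane is not a wall (the corresponding factor may cancel in the limit, but the monomial prefactor still changes). A sufficiently small $U$ handles this, so the argument survives. Second, your effectivity/positivity step --- choosing the sign of $[C]$ via the wall/effective-curve correspondence and then reading off $v\in\sA^\wedge_{>0}$ from triangularity --- is sketched rather than proved; you assert that the $a$-character ``must be attracting'' without saying why $p>r$ forces this. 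The paper is equally silent here, so you are not behind it, but if you want a complete proof you should spell out how the ordering on fixed points controls the sign of $(\chi_p-\chi_r)(\sigma,\sw)$ for $\sigma\in\fC$ and $\sw$ near the ample cone.
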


\section{Identification of the matrix $Z'$}

For ample and anti-ample perturbations of a slope $\sw$, it is possible to describe the $Z'$-matrix in \ref{mainfactorization} explicitly in terms of the symplectic dual variety.

The slope $\sw$ corresponds to a subvariety in $\sX^!$ constructed as follows.
Under the isomorphism $\kappa$, the slope $\sw \in H^2(\sX, \R)$ corresponds to an element
\[\kappa(\sw) \in \Lie_\R(\sA).
\]
It generates some subgroup $\nu_{\kappa(\sw)} \subset \sA$, 
and let
$
Y_\sw = (\sX^!)^{\nu_{\kappa(\sw)}} \subset \sX^!
$
be the $\nu_{\kappa(\sw)}$-fixed subvariety. 
\begin{theorem} \label{mainfactorizationexplicit}
Let $\sw \in \Walls(\sX)$ and $\eps$ be an infinitesimal ample (or anti-ample) slope for $\sX$, such that $\sw' = \sw + \eps$ is a regular slope. Then
$Z'$ is conjugate to the K-theoretic stable envelopes of $Y_\sw$ with infinitesimal ample (respectively, anti-ample) slope:
\[
Z' = H \tilde Z H^{-1},
\]
where
\[
\tilde Z = \frac{\Stab_{\sigma^!}^{\fD_\pm(Y_\sw),K}(p^!)|_{r^!}}{\Stab_{\sigma^!}^{\fD_\pm(Y_\sw),K}(r^!)|_{r^!}},
\]
and $H$ is the same as in \ref{limitnormal} but for $\sX^!$ instead of $\sX$.
\end{theorem}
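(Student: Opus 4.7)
The proof proceeds by reinterpreting Theorem \ref{mainfactorization} through the lens of 3d mirror symmetry and then invoking Theorem \ref{limitnormal} on the dual variety $\sX^!$. By the mirror symmetry identity $\tilde T^\sX_{pr}(a,z) = \tilde T^{\sX^!}_{r^! p^!}(a^!, z^!)$ under the coordinate identification $a^! = \kappa(z)$, $z^! = \kappa(a)$, the Kähler shift $z\mapsto zq^\sw$ on the $\sX$-side corresponds precisely to the equivariant shift $a^!\mapsto a^! q^{\kappa(\sw)}$ on the $\sX^!$-side. The cyclic subgroup $\langle e^{2\pi i \kappa(\sw)}\rangle = \nu_{\kappa(\sw)}\subset \sA^!$ has fixed subvariety $(\sX^!)^{\nu_{\kappa(\sw)}} = Y_\sw$, which is exactly the variety appearing in the statement.

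Applying Theorem \ref{limitnormal} on the dual side to the equivariant shift $\kappa(\sw)$ and the infinitesimal chamber $\fD_\pm(Y_\sw) \subset \Lie_\R(\sK_{Y_\sw})$ yields that the doubly-conjugated limit of $\tilde T^{\sX^!}(a^! q^{\kappa(\sw)}, z^!)$ equals $H\tilde Z H^{-1}$, where $\tilde Z$ is the K-theoretic stable envelope matrix of $Y_\sw$ in the given chamber and $H$ is the diagonal of $\hbar$-monomials prescribed in \ref{limitnormal} (with the data of $\sX^!$ in place of $\sX$). The conjugation by $Z = \diag((z^!)^{\chi^!_{p^!}(\kappa(\sw),-)})$ in \ref{limitnormal} becomes, via the coordinate swap $z^! = \kappa(a)$ and the natural identity of pairings $\chi^!_{p^!}(\kappa(\sw), \kappa(\nu)) = \chi_p(\nu,\sw)$, exactly the $\L_\sw$-conjugation by $\diag(a^{\chi_p(-,\sw)})$ appearing in Theorem \ref{mainfactorization}. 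Meanwhile, the $z^!\to 0_{\fD_\pm(Y_\sw)}$ limit absorbs the $a$-dependence of the residual factor $A^{[\sw'], \sX}(a,\hbar)$ in the factorization on the $\sX$-side: by the Newton-polygon bounds used in the proof of \ref{mainfactorization}, this factor collapses to a diagonal monomial in the limit, contributing only to the normalization.

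Concretely, starting from the factorization $\lim_{q\to 0}\tilde T^\sX(zq^\sw, a) = \L_\sw Z'(z,\hbar)\L_\sw^{-1}\cdot A^{[\sw'], \sX}(a,\hbar)$, transporting it to the $\sX^!$-side by mirror symmetry and applying the two limits of \ref{limitnormal}, the $\L_\sw$-conjugation cancels against the $Z$-conjugation and the $A^{[\sw'], \sX}$ factor is eliminated by the chamber limit. What remains is precisely $Z'(z,\hbar) = H\tilde Z H^{-1}$. The choice of ample (resp.\ anti-ample) infinitesimal slope $\eps$ for $\sX$ dictates the direction of the $z^!$-limit on the dual side, which under $\kappa$ and the natural restriction $\Lie_\R(\sK^!)\to \Lie_\R(\sK_{Y_\sw})$ becomes the infinitesimal ample (resp.\ anti-ample) chamber $\fD_+(Y_\sw)$ (resp.\ $\fD_-(Y_\sw)$).

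The principal technical obstacle I anticipate is precisely this chamber-matching step: one must verify that an infinitesimal ample perturbation $\eps$ in the Kähler cone of $\sX$ maps, under $\kappa$ combined with the restriction to $Y_\sw$, to an infinitesimal ample class in $H^2(Y_\sw, \R)$. This is a compatibility between the 3d mirror dictionary (which in general swaps Kähler and equivariant data only up to subtle restrictions when passing to subvarieties) and the intrinsic Kähler geometry of $Y_\sw$ as a Nakajima-type subvariety of $\sX^!$. A secondary check is that the two $H$-matrices — the one in the statement and the one produced by \ref{limitnormal} applied to $\sX^!$ — coincide as diagonal matrices of $\hbar$-monomials with signs; this amounts to bookkeeping involving the polarization $T^{\sX^!, 1/2}$, its $\nu_{\kappa(\sw)}$-invariant part, and the indices of fixed points $p^!\in (\sX^!)^{\sA^!}$, all transported under the mirror identification.
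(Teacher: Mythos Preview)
Your proposal is correct and follows essentially the same route as the paper: both arguments combine the factorization Theorem \ref{mainfactorization} with mirror symmetry and then invoke Theorem \ref{limitnormal} on $\sX^!$ to identify the $z$-dependent factor with K-theoretic stable envelopes of $Y_\sw$. The paper presents the steps in the reverse order --- first isolating $Z'$ as the double limit $\lim_{a\to 0}\L_{-\sw}\bigl(\lim_{q\to 0}\tilde T^\sX(zq^\sw,a)\bigr)\L_\sw$, then recognizing this via $\kappa$ as the limit of \ref{limitnormal} for $\sX^!$ --- but the content is the same.

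One point deserves sharpening. You say the residual factor $A^{[\sw'],\sX}$ ``collapses to a diagonal monomial in the limit, contributing only to the normalization.'' In fact it collapses to the \emph{identity}: in the normalization $A^{[\sw'],\sX}_{pr}=\Stab(p)|_r/\Stab(r)|_r$ the diagonal is $1$, and the paper kills the off-diagonal entries directly by observing that for $p>r$ the ampleness of $\eps$ gives $\chi_p(\fC,\eps)-\chi_r(\fC,\eps)>0$, so the K-theoretic window condition for slope $\sw'=\sw+\eps$ forces $\bigl(\L_{-\sw}A^{[\sw'],\sX}\L_\sw\bigr)_{pr}\to 0$ as $a\to 0_\fC$. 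This is cleaner than invoking ``Newton-polygon bounds from the proof of \ref{mainfactorization},'' and it matters: the $H$-matrix in the statement comes entirely from \ref{limitnormal} applied to $\sX^!$, not from any leftover diagonal contribution of $A$. Your flagged concern about chamber-matching (ample for $\sX$ $\leftrightarrow$ $\fD_+(Y_\sw)$) is exactly the right thing to verify; the paper handles it implicitly by noting that under $\kappa$ the direction $a\to 0_\fC$ becomes the $z^!\to 0_{\fD_+}$ limit prescribed in \ref{limitnormal}.
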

\begin{proof}
Let us prove the theorem for  $\eps \in \fD_+(\sX)$. The idea is to express $Z'$ as a certain limit of $\tilde T$. By the  factorization theorem \ref{mainfactorization}
\[
\L_{-\sw} \lim_{q\to 0} \tilde T^\sX(zq^\sw, a) \L_{\sw}
=
\L_{-\sw} Z'' A^{[\sw'], \sX} \L_{\sw} = 
Z' \L_{-\sw} A^{[\sw'], \sX} \L_{\sw}.
\]
Since the slope $\eps$ is ample, we have
\[
\chi_p(\eps, \fC) - \chi_r(\eps, \fC) > 0
\]
for $p>r$, and from the K-theoretic window condition, it follows that
\[
\lim_{a \to 0} \L_{-\sw} A^{[\sw'], \sX} \L_{\sw} = 1.
\]
Then the $Z$-matrix for $\sX$ can be recovered as
\[
Z' = \lim_{a \to 0} \L_{-\sw} \lim_{q\to 0} \tilde T^\sX(zq^\sw, a) \L_{\sw}
\]
For the symplectic dual variety $\sX^!$, it becomes the limit we already discussed, which concludes the proof.

\end{proof}

\section{Wall-crossing operators} \label{wall-crossing}

We will now define the wall-crossing operator for a slope $\sw \in H^2(\sX, \R)$. For a generic ample $\eps \in \fD_+(\sX)$ the slopes $\sw \pm \eps$ are regular, and thus define K-theoretic stable envelopes.
The wall R-matrix is defined as the transition matrix between two such bases:
\[
\sR^\sX_\sigma (\sw) = \Stab_\sigma^{[\sw-\eps]} \left(\Stab_\sigma^{[\sw + \eps]}\right)^{-1}.
\]
The wall operators for $\sw$ belonging to exactly one hyperplane play a fundamental role. Other wall operators can be factorized into them.
For example, the distinguished operator is $\sR_\sigma(0)$, describing the transition from ample to anti-ample slopes. It has a factorization
\[
\sR_\sigma(0) = \prod_{\text{walls passing through 0}} \sR^\sX(\sw).
\]
\begin{figure}[h]
    \centering
    \begin{tikzpicture} [scale=2]
    \draw (-1,0) -- (1,0) node [anchor = west] {$\sw_3$} ;
    \draw [rotate=60] (-1,0) -- (1,0) node [anchor = south] {$\sw_1$} ;
    \draw [rotate=120] (-1,0) -- (1,0) node [anchor = south] {$\sw_2$};
    
    \draw [<->, blue, rotate=30] (-.5,0) -- (.5,0) node [anchor=west] {$\sR(0)$} ;
    
    \draw [<->, blue, rotate=30] (-.25,.5) -- (.25,.5) node [anchor=south] {$\sR(\sw_2)$} ;
    
    \draw [<->, blue, rotate=30] (.15,.25) -- (.65,.25) node [anchor=west] {$\sR(\sw_1)$} ;
    
    \draw [<->, blue, rotate=30] (-.65,.25) node [anchor=north east] {$\sR(\sw_3)$} -- (-.15,.25)  ;
    
    \node at (0.7,-0.5) {$\Lie_\R(\sK)$};
    
    \end{tikzpicture}
    \caption{Example of factorization $\sR(0) = \sR(\sw_1) \sR(\sw_2) \sR(\sw_3)$.}
\end{figure}
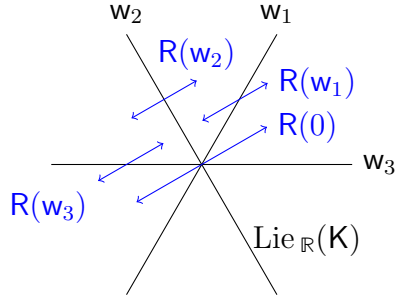

The wall matrices for symplectic dual varieties are related.
\begin{theorem}
Let $\sw \in H^2(\sX, \R)$ and $Y_\sw \subset \sX^!$ be the corresponding subvariety. Then $\sR^\sX_\sigma(\sw)$ and $\sR^{Y_\sw}_{-\sigma^!}(0)$ are conjugate by a diagonal matrix.
\end{theorem}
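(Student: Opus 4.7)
The plan is to express both sides of the claimed equality as explicit composites of the factorizations established in Theorems \ref{mainfactorization} and \ref{mainfactorizationexplicit}, and then compare.

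First, I would start from the definition
\[
\sR^\sX_\sigma(\sw) = \Stab_\sigma^{[\sw-\eps]} \left(\Stab_\sigma^{[\sw+\eps]}\right)^{-1},
\]
for an infinitesimal ample $\eps \in \fD_+(\sX)$, and rewrite it in the diagonally normalized form of \eqref{anormal}: up to conjugation by a diagonal matrix of $\hbar$-monomials, the wall operator is $A^{[\sw-\eps],\sX}\, (A^{[\sw+\eps],\sX})^{-1}$. Next, Theorem \ref{mainfactorization} applied to the two slopes $\sw \pm \eps$, which are regular perturbations of the same non-regular slope $\sw$, gives a single limit
\[
\lim_{q\to 0} \tilde T^\sX(z q^\sw, a) \;=\; Z_+ \cdot A^{[\sw+\eps],\sX} \;=\; Z_- \cdot A^{[\sw-\eps],\sX},
\]
where $Z_\pm = \L_\sw Z'_\pm(z,\hbar) \L_\sw^{-1}$ and the common limit on the left is independent of the infinitesimal perturbation. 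Rearranging, I get
\[
A^{[\sw-\eps],\sX} (A^{[\sw+\eps],\sX})^{-1} \;=\; Z_-^{-1} Z_+ \;=\; \L_\sw\,(Z'_-)^{-1} Z'_+\, \L_\sw^{-1}.
\]
Thus, modulo conjugation by the diagonal matrix $\L_\sw$ of classical multiplications by the fractional line bundle $\sw$, the wall operator $\sR^\sX_\sigma(\sw)$ equals $(Z'_-)^{-1} Z'_+$.

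The second step is to recognize $(Z'_-)^{-1} Z'_+$ as a wall operator at $0$ for $Y_\sw$. By Theorem \ref{mainfactorizationexplicit}, after conjugation by the same diagonal $H$ (coming from the $\hbar$-prefactors in Theorem \ref{limitnormal}), $Z'_+$ and $Z'_-$ are the normalized K-theoretic stable envelopes $\tilde Z_\pm$ of the subvariety $Y_\sw \subset \sX^!$ for the infinitesimal ample chamber $\fD_+(Y_\sw)$ and anti-ample chamber $\fD_-(Y_\sw)$ respectively, computed with some equivariant chamber. Taking the ratio kills $H$ and yields precisely the transition matrix between the two K-theoretic stable envelopes of $Y_\sw$ at the two infinitesimal chambers at $0$, which is by definition the wall R-matrix of $Y_\sw$ at slope $0$.

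The remaining point --- which I expect to be the main subtlety of the argument --- is the identification of the equivariant chamber as $-\sigma^!$ rather than $\sigma^!$. Under the 3d mirror identification $\kappa$, the chamber in $\Lie_\R(\sA^!) \supset \Lie_\R(\sA_{Y_\sw})$ induced from the slope convention used in Theorem \ref{mainfactorizationexplicit} gets reversed when one passes from the presentation as a limit at $z \to 0$ on $\sX$ to the presentation via stable envelopes on $\sX^!$. Concretely, I would verify this by tracking the duality between equivariant and K\"ahler data: the $Z'_\pm$ were obtained as limits $\lim_{a \to 0} \L_{-\sw}\, \lim_{q\to 0}\tilde T^\sX(zq^\sw, a)\, \L_\sw$, and applying 3d mirror symmetry $\tilde T^\sX_{pr} = \kappa^* \tilde T^{\sX^!}_{r^! p^!}$ swaps the roles of attracting and repelling directions for the induced torus action on $Y_\sw$, which is precisely the passage from $\sigma^!$ to $-\sigma^!$. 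Together with the involution $(\Stab^{[\sw'], \sX, K}_{\sigma})^{-1} = \Stab^{[-\sw'], \sX, K}_{-\sigma}$ used in the proof of Theorem \ref{mainfactorization}, this fixes the sign, and the theorem follows.
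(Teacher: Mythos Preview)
Your proposal is correct and takes essentially the same approach as the paper, whose proof is a single sentence: ``Follows from two factorizations of a limit of the elliptic stable envelopes: to ample and anti-ample slopes.'' You have simply unpacked that sentence into the natural steps --- equating the two factorizations from Theorem~\ref{mainfactorization}, isolating $(Z'_-)^{-1}Z'_+$, and then invoking Theorem~\ref{mainfactorizationexplicit} to recognize the result as the ratio of the two infinitesimal-slope stable envelopes for $Y_\sw$ --- which is exactly what the paper intends. Your discussion of why the chamber comes out as $-\sigma^!$ rather than $\sigma^!$ is more than the paper offers (it does not address this point at all); your explanation via the transpose in the mirror identification $\tilde T^\sX_{pr}=\kappa^*\tilde T^{\sX^!}_{r^!p^!}$ together with the duality $(\Stab_\sigma^{[\sw']})^{-1}\sim\Stab_{-\sigma}^{[-\sw'],\,T^{1/2}_{opp}}$ is the right mechanism, though strictly speaking that duality also flips the polarization, so the precise bookkeeping would need one more line.
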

\begin{proof}
Follows from two factorizations of a limit of the elliptic stable envelopes: to ample and anti-ample slopes.
\end{proof}

\begin{remark}
Since they depend on equivariant parameters for dual varieties (which are K\"ahler for each other), they must depend significantly only on the parameter $\hbar$.
\end{remark}

\begin{corollary}
If $p^!$ and $r^!$ belong to different components of $Y_s$, then
\[
\sR^\sX_\sigma(\sw) = 0.
\]
\end{corollary}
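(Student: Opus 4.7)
The plan is to derive the corollary directly from the preceding theorem, which identifies $\sR^\sX_\sigma(\sw)$ with $\sR^{Y_\sw}_{-\sigma^!}(0)$ up to conjugation by a diagonal matrix in the fixed-point basis. Since diagonal conjugation does not affect which matrix elements vanish, it suffices to prove that the matrix element of $\sR^{Y_\sw}_{-\sigma^!}(0)$ between $p^!$ and $r^!$ is zero whenever these two fixed points lie in different connected components of $Y_\sw$.

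To prove this, I would first establish that K-theoretic stable envelopes are compatible with the decomposition into connected components. If $Y_\sw = \coprod_i Y_i$ is the decomposition into connected components, then for a fixed point $p^! \in Y_i$ the full attracting set $\Attr^f_{-\sigma^!}(p^!)$ is contained in $Y_i$, because any one-parameter flow starting in one connected $(\C^\times)$-invariant subvariety cannot leave it. Via the support axiom, this forces $\Stab^{\fD_\pm(Y_\sw),K}_{-\sigma^!}(p^!)$ to be supported on $Y_i$, and so to restrict trivially to any fixed point in a different component. Consequently, both K-theoretic stable envelope bases for $Y_\sw$ at the two infinitesimal perturbations of the zero slope are block-diagonal with respect to $\{Y_i\}$.

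The wall R-matrix $\sR^{Y_\sw}_{-\sigma^!}(0)$ is, by the definition in Section~\ref{wall-crossing}, the composition of one such transition matrix with the inverse of the other. Being a product of block-diagonal matrices with the same block structure, it is itself block-diagonal, which is exactly the required vanishing of matrix elements between points in different components. Transporting through the diagonal conjugation of the previous theorem yields the claim for $\sR^\sX_\sigma(\sw)$.

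The one step that deserves care is the check that the defining axioms of the K-theoretic stable envelope really produce a direct-sum decomposition over the components $Y_i$; this is the only nontrivial ingredient, and it reduces to the elementary observation that attracting sets are continuous flows and so cannot cross between connected components. No deep input is needed beyond the preceding theorem and this connectedness remark.
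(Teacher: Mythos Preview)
Your argument is correct and is precisely the reasoning the paper leaves implicit: in the text the corollary is stated without proof, immediately after the theorem identifying $\sR^\sX_\sigma(\sw)$ with $\sR^{Y_\sw}_{-\sigma^!}(0)$ up to diagonal conjugation, so the intended deduction is exactly the block-diagonality of stable envelopes (and hence of wall $R$-matrices) with respect to the connected components of $Y_\sw$. Your justification via the support axiom and the fact that full attracting sets cannot leave a connected component is the standard and correct way to spell this out.
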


These theorems factor a wall R-matrix $\sR^\sX_\sigma(\sw)$ into a product of wall R-matrices associated to $Y_s$, which can be further factorized using 3d mirror dual to $Y_s$. This leads to a factorization into "elementary" blocks, which cannot be further factorized.

\section{Application: Hilbert scheme}

The Hilbert scheme of points $\sX = \Hilb^n(\C^2)$ is known \cite{OkTalk} to be self-dual:
\[
\sX^! \cong \sX.
\]
The Picard group is generated by the tautological bundle $\cO(1)$. We identify $\Lie_\R(\sK) \cong \R$.
Upon this identification, the walls of $\sX$ are located at the following points
\[
\Walls(\sX) = \left\{
\frac{a}{b} \in \Q: |b| \leq n
\right\}
\]
For a slope $\sw = a/b$, the subvariety $Y_\sw \subset \sX^!$ is a Nakajima quiver variety for the cyclic quiver with $b$ vertices: the framing dimension vector is (1,0,...,0) and we take the union over all dimensions of the vertices.
Its K-theory is known to be the Fock module for the quantum toroidal algebra $\cU_\hbar(\widehat{\widehat{\mathfrak{gl}}}_b)$. The K-theoretic stable envelopes of $\sX$ with infinitesimal ample and anti-ample slopes correspond to the so-called {\it standard} and {\it co-standard} bases of the Fock module. Then our result \ref{mainfactorizationexplicit} implies
\begin{theorem}
The wall $R$-matrix $\sR(\sw)$ for $\sw = \frac{a}{b}$ coincides with the transition matrix from the standatd to the costandard basis in the Fock module for the quantum toroidal algebra $\cU_\hbar(\widehat{\widehat{\mathfrak{gl}}}_b)$
\end{theorem}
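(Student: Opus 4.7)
The plan is to assemble the claim by combining the factorization theorem \ref{mainfactorizationexplicit} with the known representation-theoretic content of stable envelopes for cyclic quiver varieties. First, I would apply the wall-crossing identification: by the definition in Section \ref{wall-crossing}, the wall $R$-matrix $\sR^\sX_\sigma(\sw)$ is the transition matrix between K-theoretic stable envelopes for the infinitesimal ample and anti-ample perturbations $\sw \pm \eps$. Using \ref{mainfactorizationexplicit} for both choices of sign of $\eps$ and taking the product (after conjugating away the diagonal $H$ factors, which match on both sides since the two limits share the same normalization data), one obtains
\[
\sR^\sX_\sigma(\sw) \;\sim\; \tilde Z_{+} \tilde Z_{-}^{-1},
\]
where $\tilde Z_\pm$ are the normalized K-theoretic stable envelopes of $Y_\sw \subset \sX^!$ for the two infinitesimal chambers $\fD_\pm(Y_\sw)$; this is the general content of the theorem preceding \ref{wall-crossing} about the factorization $\sR(0)^{Y_\sw}_{-\sigma^!}$.

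Next I would specialize to $\sX = \Hilb^n(\C^2)$, where the self-duality $\sX^! \cong \sX$ was recalled from \cite{OkTalk} and where the subvariety attached to $\sw = a/b$ is the union (over framing-compatible dimensions) of Nakajima quiver varieties for the affine cyclic quiver $\widehat A_{b-1}$ with framing $(1,0,\dots,0)$. The second step is therefore to invoke the identification
\[
\bigoplus_{\mathbf n} K_\sT\!\left(\cM(\mathbf n, (1,0,\dots,0))\right) \;\cong\; \text{Fock module of } \cU_\hbar(\widehat{\widehat{\mathfrak{gl}}}_b),
\]
proven in the cyclic-quiver analogue of \cite{SchifVas}. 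Under this identification, the stable bases for the two infinitesimal ample/anti-ample slopes are by construction the standard and costandard bases of the Fock module: this is the statement that the Okounkov-Maulik stable envelopes interpolate between the fixed-point basis and its dual, and it is the definition of ``standard''/``costandard'' used in the quantum toroidal literature.

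Putting the two steps together, $\sR^\sX_\sigma(a/b)$ is, up to conjugation by a diagonal matrix in $\hbar$ (absorbed into the chosen normalization of bases), precisely the change-of-basis matrix between the standard and costandard bases of the Fock module of $\cU_\hbar(\widehat{\widehat{\mathfrak{gl}}}_b)$, which is the statement of the theorem. The principal obstacle is not the factorization step, which is a direct corollary of \ref{mainfactorizationexplicit}, but rather the clean identification of the bases: one must check that the K-theoretic stable envelope of the cyclic-quiver Nakajima variety for the two infinitesimal chambers $\fD_\pm(Y_\sw)$ agrees, up to the diagonal $H$-matrix prescribed by \ref{limitnormal}, with the standard/costandard basis as defined on the quantum toroidal side. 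This identification is known in the literature for related geometries, and the remaining task is to verify that the polarizations, orientations of chambers and normalizations of fixed-point classes used here coincide with those in the Fock-module conventions, which is a finite bookkeeping task and does not involve new geometry.
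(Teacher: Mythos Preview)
Your proposal is correct and follows essentially the same route as the paper: invoke Theorem \ref{mainfactorizationexplicit} (and its consequence in Section \ref{wall-crossing}) to identify $\sR^\sX_\sigma(\sw)$ with the transition between the infinitesimal ample and anti-ample stable bases of $Y_\sw$, then use self-duality of the Hilbert scheme to recognize $Y_\sw$ as the cyclic quiver variety whose equivariant $K$-theory is the Fock module of $\cU_\hbar(\widehat{\widehat{\mathfrak{gl}}}_b)$, with the two infinitesimal stable bases being the standard and costandard bases. The paper's own argument is just the one-line ``our result \ref{mainfactorizationexplicit} implies'', so your write-up is in fact more explicit, and your caveat about matching polarizations and normalizations with the quantum-toroidal conventions is an honest acknowledgment of the only place where something needs checking.
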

This proves the main conjecture of E.Gorsky and A.Negut in \cite{NegGor}.

\chapter{Quantum difference equations and shift operators} \label{qde}

{\it This chapter is less formal than the others and is devoted to outlining main directions of current research.}

The theory discussed in previous chapters sheds light on various fundamental questions of K-theoretic curve counting. The vortex partition function has been studied by physicists and mathematically can be formulated using the quasimap moduli space
\[
\QM(\sX) = \{\text{stable maps } f: \P^1 \to \sX\}/\cong
\]
Note that the domain is parametrized, and we consider quasimaps equivariantly with respect to automorphisms of the domain and of the target. 

Quasimap moduli spaces are defined for the target being a GIT quotient, but they have very simple meaning in case of Nakajima quiver varieties. In this case the framings and the vector spaces associated with the vertices are replaced by vector bundles over $\P^1$, and arrows are replaced by morphisms of bundles.

The moduli space of quasimaps has a perfect obstruction theory with the tangent space
\[
T_\vir = H^\bullet\left(
\mathscr{R} - (1+\hbar^{-1}) \End \V
\right),
\]
where $\mathscr{R}$ is the bundle associated with representations of a quiver (including the cotangent part), and $\V$ is the bundle associated with vertices. The two terms we are subtracting are related to the division by the gauge group and the moment map equations.

The symmetrized structure sheaf is defined as
\[
\widehat{\cO}_\vir = \cO_\vir \otimes \sqrt{
\cK_\vir \frac{\det \cT^{1/2}|_\infty}{\det \cT^{1/2}|_0}
}
\]
The presence of the virtual canonical sheaf $\cK_\vir$ results in the appearance of $\ahat(w)$ instead of $(1-w^{-1})$ for the tangent weights $w$ in the localization formula, and the twist by the determinants of the polarization is needed to get rid of square roots.

We consider the 1-dimensional torus $\C^\times_q$ acting on $\P^1$, so that the tangent spaces at $0$ and $\infty$ are $q$ and $q^{-1}$ respectively. This allows us to define K-theoretic integration over non-proper spaces by localization. As was observed by M.Aganagic and A.Okounkov, this $q$ is the same $q$ appearing in theta-functions for elliptic stable envelopes, which is a fundamental geometric fact.

For a more fundamental introduction to K-theoretic computations in enumerative geometry see \cite{pcmilect}.

\section{Toy example}

Let us start with an example where everything can be made as explicit as possible and understood from different prospectives.

Consider the moduli space of quasimaps to $\sX = T^* \P^0 = T^* \C / / \C^\times$.
A quasimap to $\sX$ of degree $d$ is a line bundle $\cO(d)$ over $\P^1$ together with two maps
\[
x: \cO \to \cO(d), \ \ \ y: \cO(d) \to \cO
\]
satisfying the moment map equation
\[
\mu = xy = 0.
\]
It follows that either $x$ or $y$ should be 0, and that is why we have two stability conditions: $x\neq 0$ and $y\neq 0$.
They correspond to two signs of characters of the gauge group $\C^\times$ and will give expansions of the vertex functions at $z=0$ and $z=\infty$.

First, consider the case $x \neq 0$. It follows that the degree of the bundle should be nonnegative: $d \geq 0$.
The off-shell tangent space is
\[
T(x) = x + \hbar^{-1} x^{-1} - (1+\hbar^{-1}),
\]
and the unique fixed point for $\sX$ corresponds to $x=1$, for which $T=0$, as expected.

A unique fixed point for quasimaps of degree $d$ corresponds to the bundle $\cO(d)$ with a linearization so that the fibers over $0$ and $\infty$ have weights $q^n$ and $1$, respectively. Nonsingularity of the quasimap at $\infty$ simply means  that the weight over $\infty$ does not depend on $q$.
The tangent space to quasimaps, thus, is
\[
\frac{T(q^n)}{1-q^{-1}} + \frac{T(1)}{1-q} = \sum_{k=1}^n \left(
q^k - \frac{1}{\hbar q^{k-1}}
\right).
\]

By localization, the vertex function is
\[
V_0(z) = \sum_{d\geq 0} \left(
-\frac{z}{\sqrt{h}}
\right)^d \frac{(\hbar)_d}{(q)_d},
\]
where
\[
(x)_d = (1-x)(1-qx)...(1-q^{d-1}x).
\]
Using the $q$-binomial formula, this function can be represented as an infinite product
\[
V_0(z) = S^\bullet \left(
\frac{1-\hbar}{1-q} \frac{z}{\sqrt{\hbar}}
\right)
 = \prod_{i\geq 0}
\frac{1-q^i z/\sqrt{\hbar}}{1-q^i z\sqrt{\hbar}}.
\]
The function $V_0(z)$ satisfies the $q$-difference equation
\[
V_0(qz) = \frac{1-z/\sqrt{\hbar}}{1-z\sqrt{\hbar}} V_0(z),
\]
so the quantum difference connection is
\[
M^\sX(a,z) = \frac{1-z/\sqrt{\hbar}}{1-z\sqrt{\hbar}}.
\]

Similarly, for the other stability condition we have
bundles $\cO(-d)$ with fibers over $0$ and $\infty$ having weights $\hbar^{-1}q^{-n}$ and $\hbar^{-1}$. By localization, we obtain the vertex function for this stability condition:
\[
V_\infty(z) = \sum_{d\leq 0} \left(-\frac{z\sqrt{\hbar}}{q}
\right)^d  \frac{(\hbar)_d}{(q)_d}.
\]
which is a trivialization of the same difference connection near $z=\infty$.

Its monodromy is
\[
\Mon = V_0(z) V_\infty(z)^{-1} \sim \frac{\vartheta(z/\sqrt{\hbar})}{\vartheta(z\sqrt{\hbar})}
\]

The monodromy is equal to the elliptic R-matrix for the variety $\sX^!$ which is a 2-dimensional vector space with character $\frac{z}{\sqrt\hbar} + \frac{1}{z\sqrt\hbar}$. Here $z$ is an equivariant parameter, and $\hbar$ is the weight of the symplectic form.

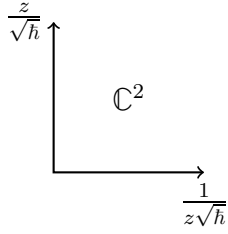
\begin{figure}
\begin{center}
\begin{tikzpicture} [scale=2] 
\draw [<->, thick] (1,0) node[anchor=north] {$\frac{1}{z\sqrt\hbar}$} -- (0,0) -- (0,1) node[anchor=east] {$\frac{z}{\sqrt\hbar}$};
\draw (0.5,0.5) node {$\C^2$} ;
\end{tikzpicture}
\end{center}
\caption{Symplectic dual variety $\sX^! \cong \C^2$ for the variety $\sX = T^* \P^0$}.
\end{figure}

The quantum difference equation can be recovered from the monodromy by taking the limit $q\to 0$:
\[
\lim_{q\to 0} \frac{\vartheta(z/\sqrt\hbar)}{\vartheta(z\sqrt\hbar)} = \hbar \frac{1-z/\sqrt{\hbar}}{1-z\sqrt\hbar},
\]
which coincides with $M^\sX(a,z)$ up to a factor $\hbar$.

The variety $\sX^!$ is contractible, so there are no nontrivial curve counts in nonzero degree. In order to compute the shift operator for $\sX^!$ we should consider the bundle
\[
\cO(1) \oplus \cO(-1) \to \P^1.
\]
The tangent space to quasimaps is given by the cohomology:
\[
H^\bullet (\cO(1) \oplus \cO(-1)) = \frac{z}{\sqrt\hbar} + \frac{zq}{\sqrt\hbar}.
\]

Let the polarization be given by the $z/\sqrt\hbar$ direction.
Then, by localization the contribution of the unique fixed point is
\[
\frac{1}{\ahat(z/\sqrt\hbar) \ahat(zq/\sqrt\hbar)} \cdot \sqrt{1/q}.
\]
Similarly, for the gluing matrix we obtain
\[
\frac{1}{\ahat(z/\sqrt\hbar) \ahat(zq/\sqrt\hbar)}
\]
Dividing one by the other, we get
\[
S = \frac{1-z\sqrt\hbar}{1-zq/\sqrt\hbar},
\]
which is equivalent to $M^\sX(z)$ after certain change of variables and multiplication by a monomial.

\section{Elliptic R-matrix and monodromy}

Let us call the variety with another choice of stability condition by $\sX_{\flop}$. The elliptic stable envelopes for $\sX_{\flop}$ can be obtained by restrictions of the same off-shell stable envelopes to different fixed point set, and they have a different normalization. We will consider examples in chapter \ref{explicitexamples}.
\begin{proposition}
Consider the R-matrix related to the change of the equivariant chamber
\[
R_{\fC \to -\fC}^\sX = \Stab_{-\fC}^\sX \Stab_{\fC}^{\sX-1}
\]
 and to the flop
 \[
R_\fC^{\sX \to \sX_{\flop}} = \Stab_\fC^{\sX_{\flop}} \Stab_\fC^{\sX-1}
\]
The corresponding elliptic correspondences in $\sX^\sA\times \sX^\sA$ are balanced in equivariant and in K\"ahler parameters (separately).
\end{proposition}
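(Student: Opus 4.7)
The plan is to reduce both statements to Proposition~\ref{twopol}, which identifies $\vartheta(V_1)/\vartheta(V_2)$ as balanced whenever $V_1$ and $V_2$ are two polarizations of the same virtual character $V$. The strategy is to exhibit each matrix entry of the R-matrix, after unfolding the definition of the stable envelope inverse, as such a polarization ratio multiplied by factors already known to be balanced.

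First I would handle the diagonal entries. For the chamber-change R-matrix, the normalization of elliptic stable envelopes gives $\Stab_\fC^\sX(p)|_p = \vartheta(N_p^-)$ and $\Stab_{-\fC}^\sX(p)|_p = \vartheta(N_p^+)$, where $N_p^\pm$ are the attracting/repelling parts of $T_p\sX$ in chamber $\fC$. Since the symplectic form has weight $\hbar$, the identity $N_p^+ = \hbar^{-1}(N_p^-)^\vee$ displays $N_p^-$ and $N_p^+$ as two polarizations of $T_p\sX$, so Proposition~\ref{twopol} yields balancedness of the diagonal entry in equivariant variables. For the flop R-matrix, the same argument applies with the two polarizations $T^{1/2}\sX$ and $T^{1/2}\sX_{\flop}$ of the common tangent representation at the matching fixed point.

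Next, for off-diagonal entries, I would use the duality formula for the stable envelope inverse, which expresses $(\Stab_\fC^{T^{1/2}})^{-1}$, up to the normalization by $\vartheta(T\sX)$, as the transpose of the stable envelope with opposite polarization and equivariant chamber. Substituting, each matrix coefficient of the R-matrix becomes a localization sum over intermediate fixed points $l\in \sX^\sA$ of products of two stable envelope restrictions divided by $\vartheta(T_l\sX)$. Each summand then rearranges into a balanced factor (using that $\Stab/\vartheta(T^{1/2})$ is balanced in equivariant variables, as observed just before Theorem~\ref{mainequiv}) multiplied by a two-polarization ratio on $T_l\sX$ to which Proposition~\ref{twopol} again applies.

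For K\"ahler balancedness I would track the quasi-periods in $z$. Both R-matrices are built from stable envelopes of slope zero, so the $a$-linear K\"ahler quasi-periods of $\Stab_{\pm\fC}^{\sX}$ and of $\Stab_\fC^{\sX_\flop}$ versus $\Stab_\fC^\sX$ agree at each source fixed point and cancel in the ratio. The normalization shifts $\sqrt{\det N_p^\pm/\det T_p^{1/2}}$ on $\sX^\sA$ contribute a residual factor whose numerator-to-denominator ratio is once more a two-polarization expression, hence balanced in $z$. Combined with triangular support, this yields K\"ahler balancedness. The main obstacle I expect is the careful bookkeeping of these normalization shifts and the verification that the off-diagonal localization sum indeed reorganizes term-by-term into polarization-ratio expressions; once that bookkeeping is in place the argument is entirely structural and reduces to iterated applications of Proposition~\ref{twopol}.
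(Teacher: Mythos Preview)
The paper does not actually prove this proposition; it leaves it as an ``exercise to the reader.'' So there is no proof to compare against, and your proposal should be judged on its own merits.

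Your equivariant-variable argument is essentially correct and uses the right ingredients: Proposition~\ref{twopol} for two-polarization ratios, and the balancedness of $\stab=\Stab/\vartheta(T^{1/2})$ established just before Theorem~\ref{mainequiv}. The cleanest packaging is to note that the normalized matrix $\tilde T^{\fC}_{pr}=\Stab_\fC(p)|_r/\vartheta(N_r^-)$ factors as $\stab_\fC(p)|_r\cdot\bigl(\vartheta(T^{1/2}_r)/\vartheta(N_r^-)\bigr)$, both balanced; then $R=\tilde T^{-\fC}\cdot\diag\bigl(\vartheta(N_p^+)/\vartheta(N_p^-)\bigr)\cdot(\tilde T^{\fC})^{-1}$, and since $\tilde T^{\fC}$ is unitriangular with balanced entries, its inverse is a polynomial in those entries and therefore also balanced. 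This bypasses the duality-formula localization sum you propose, which works in principle but requires you to check that the sum of individually balanced terms does not develop cancellations that spoil balancedness---a non-issue here, but unnecessary overhead.

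Your K\"ahler-variable argument has a gap. Tracking quasi-periods and showing that they cancel establishes only \emph{numerical} balancedness (degree zero in $z$), which the paper explicitly distinguishes from genuine balancedness. You should instead use directly that the $z$-dependence of $\Stab_\fC$ enters only through the universal bundle $\cU$, whose fixed-point components are of the explicit form $\prod_i\vartheta(a^{\lambda_i}z_i)/\bigl(\vartheta(a^{\lambda_i})\vartheta(z_i)\bigr)$ as in~\ref{universaldefined}; these are manifestly balanced in $z$ by the characterization of balanced sections. The diagonal normalizations $\vartheta(N_p^\pm)$ do not involve $z$ at all, so the same unitriangular-inverse argument as above finishes the K\"ahler case.
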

\begin{proof}
Exercise to the reader.
\end{proof}

\section{Shift of the K\"ahler parameters and monodromy}

We are interested in monodromies of difference equations. One could study differential equations and obtain K-theoretic wall operators as their monodromies, see \cite{smirnov2021quantum}.

Consider a quantum difference connection given by a rational operator $M(z)$:
\[
\Psi(zq) = M(z) \Psi(z).
\]
For simplicity, let us assume that $z$ is a single variable.
It is natural to define its solution as a gauge transformation of the equation, which can be done in a neighborhood of any point, for example, 0 and $\infty$:
\[
\Psi_0(zq) M(0) = M(z) \Psi_0(z),
\]
\[
\Psi_\infty(zq) M(\infty) = M(z) \Psi_\infty(z).
\]
The monodromy is defined to be a transformation
\[
\Mon(z) = \Psi_0^{-1}(z) \Psi_\infty(z).
\]
It satisfies the difference equation
\[
\Mon(zq) = M(0) \Mon(z) M(\infty)^{-1},
\]
and thus is a matrix of elliptic functions.
In general, for any rational $M(z)$ it is very hard to compute the monodromy explicitly. However, for the equations coming from enumerative geometry, their monodromy has a description in geometric terms.

Let us consider monodromies associated with the K-theoretic quantum difference equation.
The monodromy of the quantum difference equation is given by the elliptic R-matrix:
\[
\Mon = \det T^{1/2} \cdot \Stab^{\sX_{\flop}-1} \cdot \Stab^{\sX} \cdot \left(\det T^{1/2} \right)^{-1}
\]
Unlike the case of the shift operator, now $\Mon$ is an operator on $K_\sT(\sX)$.

The monodromy of a difference operator $M(z)$ can be understood as a regularized product
\[
\Mon \sim \prod_{i \in \Z}^\rightarrow M(zq^i)
\]
Each factor has its own factorization \cite{OS}
\[
M(zq^i) \sim \prod_{i\leq \sw < i+1} \B_\sw
\]
with respect to the poles of the K\"ahler variables, as explained in the appendix.
The operators $\B_\sw$ are also called the wall-crossing operators, but they should not be confused with the operators in the section \ref{wall-crossing}, since they are not triangular. It turns out that it is possible to reconstruct individual  operators $\B_\sw$ from the monodromy.

The following conjecture was proposed by A.Okounkov \cite{OkRodeIsland}:
\begin{conjecture}Denote the following limit
\[
\Mon_\sw = \lim_{q\to 0} \Mon(a, zq^\sw).
\]
Then the wall operator has the following description
\[
\B_\sw(z_\text{shifted}) = \Mon_\sw \cdot \Mon_{\sw + \eps}^{-1}
\]
\end{conjecture}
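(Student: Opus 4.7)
The plan is to deduce the conjecture from the factorization representation
\[
\Mon = \det T^{1/2} \cdot \Stab^{\sX_{\flop}-1} \cdot \Stab^{\sX} \cdot \left(\det T^{1/2}\right)^{-1}
\]
of the monodromy together with Theorem \ref{mainfactorization}. First I would substitute $z\mapsto z q^{\sw}$ and pass to $q\to 0$. Since $\det T^{1/2}$ carries no $z$-dependence, the limit passes through the outer conjugation, and each of the two elliptic stable envelope factors is controlled by the factorization theorem: writing $\sw' = \sw+\eps$ for an infinitesimally ample perturbation one has
\[
\lim_{q\to 0}\Stab^{\sX}(zq^{\sw},a) = Z^{\sX}_{\sw}\cdot A^{[\sw'],\sX},\qquad
\lim_{q\to 0}\Stab^{\sX_{\flop}}(zq^{\sw},a) = Z^{\sX_{\flop}}_{\sw}\cdot A^{[\sw'],\sX_{\flop}},
\]
where each $Z_{\sw}$ is, up to conjugation by $\L_{\sw}$, a matrix in $\hbar$ and $z$ only, and the $A^{[\sw']}$ are the K-theoretic stable envelopes at the regular slope $\sw'$.

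The next step is to form the ratio $\Mon_{\sw}\cdot \Mon_{\sw'}^{-1}$. For $\sw'$ in the interior of a chamber the factorization theorem yields trivial $Z$-factors, so $\Mon_{\sw'}$ is assembled purely from K-theoretic stable envelopes. Multiplying out, the middle $A^{[\sw'],\sX}$-pieces cancel against their inverses from $\Mon_{\sw'}^{-1}$, leaving
\[
\Mon_{\sw}\cdot \Mon_{\sw'}^{-1}=\det T^{1/2}\cdot \left(A^{[\sw'],\sX_{\flop}}\right)^{-1}\cdot \left(Z^{\sX_{\flop}}_{\sw}\right)^{-1} Z^{\sX}_{\sw}\cdot A^{[\sw'],\sX_{\flop}}\cdot\left(\det T^{1/2}\right)^{-1}.
\]
By Theorem \ref{mainfactorizationexplicit}, the core operator $(Z^{\sX_{\flop}}_{\sw})^{-1}Z^{\sX}_{\sw}$, after pulling out the $\L_{\sw}$-conjugation that produces the shift $z_{\text{shifted}}$, becomes the transition matrix between the K-theoretic stable envelopes of the $3d$ mirror subvariety $Y_{\sw}\subset \sX^{!}$ for infinitesimally ample and anti-ample slopes, i.e.\ the wall R-matrix $\sR^{Y_{\sw}}_{-\sigma^{!}}(0)$ through which the triangular wall operator $\sR^{\sX}_{\sigma}(\sw)$ factorizes in Section \ref{wall-crossing}. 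At this point the right-hand side of the conjecture is a purely geometric object attached to the single wall $\sw$ on the mirror side.

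The last and hardest step is to identify this expression with the analytic operator $\B_{\sw}$ appearing in the pole factorization of $M^{\sX}(a,z)$. A natural strategy is to characterize each $\B_{\sw}$ by its support on the wall $\sw$, its residue, and its normalization, and to verify that our candidate $\L_{\sw}(Z^{\sX_{\flop}}_{\sw})^{-1}Z^{\sX}_{\sw}\L_{\sw}^{-1}$ satisfies the same defining properties; uniqueness of such a factorization then forces the two to agree. The principal obstacle is precisely this matching: $\B_{\sw}$ is defined analytically through the rational pole structure of a $q$-difference connection on $\sX$, whereas the candidate is produced by elliptic geometry on the mirror $\sX^{!}$, and bridging them requires the full $3d$ mirror dictionary between the quantum difference equation of $\sX$ and the wall-structure data on $\sX^{!}$. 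Establishing this compatibility is the core of the program announced in \cite{OkRodeIsland} and under development in \cite{KononovSmirnov1,KononovSmirnov2}, and is the reason the statement is still phrased as a conjecture.
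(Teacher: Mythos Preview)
The paper does not prove this statement: it is explicitly stated as a \emph{conjecture} attributed to Okounkov \cite{OkRodeIsland}, and no proof is offered. There is therefore no ``paper's own proof'' to compare against; the paper only illustrates the conjecture by direct computation in the examples of Chapter~\ref{explicitexamples}.

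Your proposal is a reasonable strategic sketch, and you correctly identify the genuine obstruction in the last paragraph: matching the geometric object produced by the factorization theorems with the analytically defined operator $\B_\sw$ coming from the pole decomposition of the quantum difference connection is exactly the open step. That said, a couple of the earlier steps are looser than you present them. Theorem~\ref{mainfactorization} is stated for the \emph{normalized} matrix $\tilde T_{pr}$, not for the raw $\Stab^\sX$ or $\Stab^{\sX_{\flop}}$; to apply it to each factor of $\Mon$ separately you must track the diagonal normalizations $\vartheta(N_r^-)$ through the product and through the inverse, and check that they indeed cancel in the limit. Relatedly, writing $\lim_{q\to 0}\bigl(\Stab^{\sX_{\flop}}\bigr)^{-1}$ as the inverse of the limit is not automatic: you need the balancedness statement (Proposition in the section ``Elliptic $R$-matrix and monodromy'') that the full monodromy is balanced in $z$, rather than arguing factor by factor. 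Finally, even granting all this, your displayed formula for $\Mon_\sw\cdot\Mon_{\sw'}^{-1}$ conjugates $(Z^{\sX_{\flop}}_\sw)^{-1}Z^{\sX}_\sw$ by $A^{[\sw'],\sX_{\flop}}$ and $\det T^{1/2}$; identifying that specific conjugate with $\B_\sw$ (which lives on $K_\sT(\sX)$, not on $K_\sT(\sX^\sA)$) is part of the work you defer, not something already done by Theorem~\ref{mainfactorizationexplicit}.
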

The next conjecture follows from the previous one using \ref{mainfactorizationexplicit}:
\begin{conjecture}
In the stable basis the operator $\B_\sw$ is conjugate to the R-matrix for $\sX^{!, \nu_\sw} \subset \sX$.
\end{conjecture}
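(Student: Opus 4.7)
The plan is to deduce this conjecture as a formal corollary of the preceding conjecture combined with Theorem~\ref{mainfactorizationexplicit}. Assuming the previous conjecture, one has
\[
\B_\sw(z) \;=\; \Mon_\sw \cdot \Mon_{\sw+\eps}^{-1},
\]
where $\eps$ is an infinitesimal ample perturbation, so that $\sw+\eps$ is a regular K\"ahler slope. Substituting the geometric expression
\[
\Mon \;=\; \det T^{1/2}\cdot \bigl(\Stab^{\sX_{\flop}}\bigr)^{-1}\,\Stab^{\sX}\cdot \bigl(\det T^{1/2}\bigr)^{-1},
\]
both $\Mon_\sw$ and $\Mon_{\sw+\eps}$ become, after the limit $q\to 0$, products of K-theoretic limits of elliptic stable envelopes for $\sX$ and for $\sX_{\flop}$, which we intend to compare directly using the factorization theorems.

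The second step is to apply Theorem~\ref{mainfactorization} separately to each factor. This gives
\[
\lim_{q\to 0}\Stab^{\bullet}(a,zq^\sw) \;=\; Z^{\bullet}\cdot A^{[\sw+\eps],\,\bullet}, \qquad \bullet \in \{\sX,\sX_{\flop}\},
\]
in which the $Z$-factor is trivial when $\sw$ is itself regular. Consequently $\Mon_{\sw+\eps}$ carries no wall contribution, and forming the ratio $\Mon_\sw \cdot \Mon_{\sw+\eps}^{-1}$ cancels the regular K-theoretic stable envelopes $A^{[\sw+\eps],\bullet}$, leaving a matrix conjugate in the stable basis to $\bigl(Z^{\sX_{\flop}}\bigr)^{-1}\, Z^{\sX}$.

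The third and decisive step is to invoke Theorem~\ref{mainfactorizationexplicit} for both $\sX$ and $\sX_{\flop}$. Up to conjugation by the diagonal matrices $H$ and $\L_\sw$ appearing there, each $Z$-factor is the K-theoretic stable envelope of the dual subvariety $Y_\sw = (\sX^!)^{\nu_{\kappa(\sw)}}$ with infinitesimal slope in $\fD_+$. Since the flop of $\sX$ acts on the dual data by reversing the $\sA^!$-equivariant chamber, the two $Z$-factors are stable envelopes of $Y_\sw$ for opposite equivariant chambers, and their ratio $(Z^{\sX_{\flop}})^{-1} Z^{\sX}$ is therefore the K-theoretic R-matrix of $Y_\sw$ in the stable basis, conjugated by a single diagonal matrix, which is the content of the conjecture.

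The main obstacle is the bookkeeping at this last step. One must verify that (i) the flop of $\sX$ corresponds on the dual side exactly to reversing the equivariant chamber of $Y_\sw$, without altering the slope, the polarization, or the subvariety itself; and (ii) the diagonal conjugators $H$ and $\L_\sw$ produced by the two applications of Theorem~\ref{mainfactorizationexplicit}, together with $\det T^{1/2}$ and the shift $z \mapsto z_{\text{shifted}}$ in the previous conjecture, collapse into one overall diagonal conjugation. These are natural compatibilities, but making them precise requires careful tracking of polarizations, of the fractional line bundles $\L_\sw$, and of the $\hbar$-determinant prefactors that appear in the normalization of stable envelopes.
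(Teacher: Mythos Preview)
Your proposal is correct and follows exactly the route the paper indicates: the paper does not give an actual proof of this conjecture but only asserts in one line that it ``follows from the previous one using Theorem~\ref{mainfactorizationexplicit}.'' Your sketch makes this implication explicit, and the bookkeeping obstacles you flag in (i) and (ii) are precisely the points the paper leaves implicit.

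One small remark on your step three: when you form $(Z^{\sX_{\flop}})^{-1} Z^{\sX}$ and identify each $Z$-factor via Theorem~\ref{mainfactorizationexplicit}, note that the theorem is stated in terms of the \emph{normalized} matrices $\tilde T$ and $\tilde Z$, not the raw $\Stab$. So in the cancellation $\Mon_\sw \cdot \Mon_{\sw+\eps}^{-1}$ you should track the diagonal normalization factors $\vartheta(N_r^-)$ alongside $\det T^{1/2}$, $\L_\sw$, and $H$; these all get absorbed into the single diagonal conjugation asserted by the conjecture, but it is worth saying so explicitly. Also, your claim that flopping $\sX$ reverses the equivariant chamber $\sigma^!$ on the dual side is exactly the mirror dictionary $\theta \leftrightarrow \sigma^!$ from Section~\ref{3dmir}, so (i) is immediate once that is invoked.
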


The quantum difference equation can be viewed as a generalization of the quantum Knizhnik-Zamolodchikov equation, since the operator of the equation is a product of $R$-matrices corresponding to different varieties.

In the case when the K\"ahler torus is higher-dimensional, the situation is the same.  The solutions are expanded as series in K\"ahler variables whose exponents belong to certain cone, determined by a stability condition. Monodromy is the transition between different stability conditions. It was proved in \cite{AOElliptic} that such operators are equal to the elliptic dynamical R-matrix for $\sX^!$. The chambers for $\sX^!$ correspond to stability conditions for $\sX$ we are dealing with.

For an explicit example of such computation, see    chapter \ref{explicitexamples}.

\section{Minuscule shift operators and qKZ}

There are operators shifting equivariant variables. They commute with the quantum difference equations, and are determined uniquely by this property together with the normalization condition.  From some point of view they are easier to describe geometrically, especially when the symplectic dual variety is not known.

The simplest case is the minuscule shift operator which corresponds to a shift by a minuscule cocharacter.
A cocharacter $\sw$ is called minuscule if the algebra $\C[\sX_0]$ is generated by elements of $\sw$-degree $0, \pm 1$. For example, shifts of equivariant variables corresponding to the {\it framing torus} are generated by minuscule shifts.

It was shown \cite{pcmilect} that the minuscule shift operator  has a simple description in the basis of stable envelopes:
\[
\langle \Stab_{-, T^{1/2}_{opp}, -\L} | S_\sw  | \Stab_{-, T^{1/2}, \L} \rangle = \pm z^{\deg}.
\]
In other words, the shift operator is just the R-matrix times the classical multiplication by a line bundle for $\sX^!$. The corresponding difference equation can be identified with the quantum Knizhnik-Zamolodchikov equation for certain quantum groups, introduced in \cite{FR} based on pure representation-theoretic considerations. This result was proved in \cite{pcmilect} by a rigidity argument, given that in the stable basis such operator is holomorphic in equivariant variables.

For more general shifts, there is a similar factorization to the wall-crossing operators with respect to the poles in equivariant variables.

\section{Shift by a general cocharacter}

Consider a shift by arbitrary integral cocharacter $\sw$. The poles of the shift operators in equivariant parameters are very easy to analyze.

It is known that $\C[\sX_0]$ is a finitely generated algebra, so we have a surjective homomorphism
\[
\C[\xi_1, ..., \xi_n] \to \C[\sX_0] \to 0,
\]
which corresponds to embedding $\sX_0$ into a vector space
$\sX_0 \to V$.

We can always achieve that the generators $\xi_i$ are homogeneous and have some weights $\lambda_i$. Thus, we have a proper map
\[
\QM(\sX) \to \QM(V).
\]
Quasimaps to $V$ are very easy to describe: we have a section of $\cO(\inner{\lambda_i}{\sw})$ for each coordinate $\xi_i$. If the degree of the line is greater then 1, there are sections going to infinity with fixed values at $0$, $\infty$. The corresponding contribution is given by the cohomology, and computing the character we obtain
\begin{theorem}
The operator $S_\sw$ in the stable basis has no poles outside of
\[
a^{\lambda_i} q^i, \ \ i=1, ... , \inner{\lambda_i}{\sw} - 1 
\]
with respect to the equivariant parameter.
\end{theorem}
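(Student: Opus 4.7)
The strategy is to exploit the proper $\sT$-equivariant morphism $\pi \colon \QM(\sX) \to \QM(V)$ induced by the closed embedding $\sX_0 \hookrightarrow V = \Spec \C[\xi_1, \ldots, \xi_n]$, relocating the pole analysis for $S_\sw$ to the much simpler target $\QM(V)$. First I would unfold $S_\sw$ as a two-point correlator on $\QM(\sX)$ in which the $\C^\times_q$-action on the domain $\P^1$ has been twisted by $\sw$, with stable envelope insertions at $0$ and $\infty$. In the stable basis the matrix coefficients are computed by $\sT$-equivariant localization on fixed quasimaps, and the possible poles in the equivariant variable $a$ are exactly those weights that occur in the denominator factors $\bigwedge^\bullet T^\vee_{\vir}$; by construction the stable envelope insertions only contribute numerators, so any pole in $a$ must come from $T_{\vir}$ itself.

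Next I would decompose $T_{\vir} = \mathscr{R} - (1+\hbar^{-1}) \End \V$ according to the quiver description. The endomorphism part and the moment-map constraints govern the compact fibers of $\pi$, namely the gauge quotient together with $\mu = 0$; by properness of $\pi$ these contribute only regular characters in $a$, their $a$-dependence being absorbed by the pushforward. The surviving piece $\mathscr{R}$ is a direct sum of twists by the generators, and its pushforward under $\pi$ is computed on $\QM(V)$ as
\[
\pi_* \mathscr{R} \;=\; \bigoplus_{i=1}^{n} H^\bullet\bigl(\P^1, \cO(\inner{\lambda_i}{\sw})\bigr) \otimes a^{\lambda_i}.
\]

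At each $\sT$-fixed quasimap to $V$, the $i$-th summand contributes tangent weights $a^{\lambda_i} q^j$ for $j = 0, 1, \ldots, \inner{\lambda_i}{\sw}$. The nonsingularity condition at $\infty$ and the evaluation at $0$ prescribed by the feeding point in $\sX^\sA$ trivialize the two extremal indices $j = 0$ and $j = \inner{\lambda_i}{\sw}$ -- these are absorbed into the boundary data rather than appearing in the moving part of $T_{\vir}$. What remains are the intermediate weights $a^{\lambda_i} q^j$ for $j = 1, \ldots, \inner{\lambda_i}{\sw} - 1$, producing denominator factors $1 - a^{-\lambda_i} q^{-j}$ in the localization formula; these account for all possible poles in the equivariant parameters, as claimed.

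The principal obstacle is the careful justification that the fiber directions of $\pi$ -- the gauge degrees of freedom, the moment-map equations and the stability cut -- genuinely contribute no new poles in $a$ beyond those visible on $\QM(V)$. Concretely, one must check that the character of $\pi_* \widehat{\cO}_{\vir}$, viewed as a rational function of $a$, has denominator contained in the weights listed above. This follows from properness combined with the observation that the gauge group and the moment map are equivariant but carry trivial $\sA$-weights on the defining equations, so the only $\sA$-characters entering the denominator are those inherited from the representation bundle $\mathscr{R}$, which we have just enumerated.
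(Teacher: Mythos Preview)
Your overall strategy---exploit the proper map $\pi\colon\QM(\sX)\to\QM(V)$ and read off the poles from the non-compact directions of $\QM(V)$---is the paper's approach. The execution, however, conflates two unrelated objects: the quiver representation bundle $\mathscr{R}$ appearing in $T_{\vir}=H^\bullet(\mathscr{R}-(1+\hbar^{-1})\End\V)$, and the ambient vector space $V=\Spec\C[\xi_1,\ldots,\xi_n]$ built from generators of $\C[\sX_0]$. These are genuinely different. For $\Hilb(\C^2,n)$ the bundle $\mathscr{R}$ records the arrows $X_1,X_2,I,J$ and carries $\sA$-weights $\pm1,0$, whereas the generators $\xi=\tr(X_1^iX_2^j)$ carry weights $t_1^it_2^j$ of arbitrary size---precisely the $\lambda_i$ in the statement. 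Your displayed formula for ``$\pi_*\mathscr{R}$'' therefore has no meaning as written, and the claim that ``$\mathscr{R}$ is a direct sum of twists by the generators'' is false. More seriously, the individual fixed-quasimap contributions $1/\ahat(T_{\vir})$ do have poles at many more equivariant weights than the ones listed (every weight of $H^\bullet(\mathscr{R})$ at that fixed point); the spurious ones cancel only after summing over all fixed points, which a term-by-term decomposition of $T_{\vir}$ cannot detect.

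The paper's argument avoids this entirely by never decomposing $T_{\vir}$. Properness of $\pi$ means the pushforward $\pi_*\widehat{\cO}_{\vir}$ lands in honest (non-localized) equivariant K-theory of $\QM(V)$, so every pole in $a$ must already be visible when one localizes on $\QM(V)$ alone. A quasimap to the linear target $V$ twisted by $\sw$ is simply a tuple of sections of $\cO(\inner{\lambda_i}{\sw})$; with the values at $0$ and $\infty$ fixed, the remaining free coefficients form a $\sT$-module with character $\sum_i\sum_{j=1}^{\inner{\lambda_i}{\sw}-1}a^{\lambda_i}q^j$, and these exhaust the possible poles. The whole content is properness of $\pi$ together with the elementary description of quasimaps to a vector space---no analysis of the quiver obstruction theory is needed or helpful.
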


\begin{example}
Consider the example of the Hilbert scheme of points in $\C^2$:
\[
\sX = \Hilb(\C^2, n).
\]
From  geometric invariant theory, the ring of invariant functions on the prequotient is generated by
\[
\tr(X_1^i X_2^j) \ \ \text{for} \ \ i+j\leq n.
\]
The bundle corresponding to $\tr(X_1^i X_2^j)$ is $\cO(i-j)$.
Thus, the poles of the shift operator are
\[
\{t_1^i t_2^j q^k\}_{k=1}^{i-j-1}.
\]
Let us consider small values of $n$:
\begin{enumerate}
\item For $n=1$ the character is minuscule, and the shift operator is integral;
\item For $n=2$ the only pole is at $t_1^2 q$;
\item For $n=3$ the only poles are at
\[
t_1^2 q, t_1^3 q, t_1^3 q^2;
\]
\item For $n=4$ the only poles are at
\[
t_1^2 q, t_1^3 q, t_1^3 q^2, t_1^4 q, t_1^4 q^2, t_1^4 q^3, t_1^3 t_2 q,
\]
and so on.
\end{enumerate}

\end{example}

For simplicity, assume that the equivariant torus is 1-dimensional. By the Wiener-Hopf factorization theorem (see the appendix),  there is a factorization of the shift operator corresponding to the poles with respect to the equivariant parameter:
\[
S_\sw = \prod S_{\sw_i}.
\]

Define
\[
\Mon(a) = \Stab^{\sX}_{\fC} \cdot \Stab^{\sX - 1}_{-\fC}.
\]
It is an operator on $K(\sX^\sA)$, and let
\[
\Mon_\sw = \lim_{q \to 0} \Mon(aq^\sw),
\]
where the limit is taken in the natural basis in $K(\sX^\sA)$.

The following conjectures were proposed by A.Okounkov \cite{OkRodeIsland}.
\begin{conjecture}
The operator $S_\sw$ in the stable basis is conjugate to
\[
\Mon_\sw \cdot \Mon_{\sw + \eps}^{-1}.
\]
\end{conjecture}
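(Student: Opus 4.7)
My plan is to mirror the strategy that was sketched (and conjectured) for the Kähler wall operators $\B_\sw$, but carried out in the equivariant direction, and to close the loop by invoking 3d mirror symmetry. Conceptually, under the isomorphism $\kappa: \sA \to \sK^!$ the shift operator $S_\sw$ for $\sX$ should correspond to the Kähler wall operator $\B_{\kappa(\sw)}$ for $\sX^!$, and $\Mon(a)$ — the change of equivariant chamber matrix for $\sX$ — should correspond to the Kähler monodromy for $\sX^!$. The rigidity and factorization arguments already available for the Kähler case can then be transferred.

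\textbf{Key steps.} First, I would use the Wiener-Hopf factorization recalled in the paper to reduce the statement to the case where $\sw$ lies on (or on one side of) exactly one equivariant wall, reducing the general shift operator to a product of such elementary factors. Second, I would identify the right-hand side geometrically: by Theorem \ref{limitnormal}, the limits $\Mon_\sw = \lim_{q\to 0} \Mon(aq^\sw)$ and $\Mon_{\sw+\eps}$ are, up to the diagonal conjugations $Z$ and $H$, the ratio of K-theoretic stable envelopes for the subvariety $\sX^G$ with $G = \langle e^{2\pi i \sw}\rangle$, taken in infinitesimally opposite K-theoretic slopes. Consequently,
\[
\Mon_\sw \cdot \Mon_{\sw+\eps}^{-1} \;=\; H\,\sR^{\sX^G}_{-\sigma^!}(0)\,H^{-1}
\]
(up to the diagonal $Z$, which cancels since $\sw$ and $\sw+\eps$ differ infinitesimally), so the right-hand side is conjugate to a genuine R-matrix of the fixed subvariety. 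Third, by the pole theorem on $S_\sw$, the shift operator in the stable basis has poles only at the explicit divisor $\{a^{\lambda_i}q^i\}$, so each Wiener-Hopf factor has at most the poles produced by a single wall; this matches the analytic structure of $\Mon_\sw \cdot \Mon_{\sw+\eps}^{-1}$. Fourth, I would run a rigidity argument: both $S_\sw$ (conjugated by stable envelopes) and $\Mon_\sw\Mon_{\sw+\eps}^{-1}$ commute with the quantum difference connection $M^\sX(a,z)$ (the first because the two difference equations in $a$ and $z$ commute, the second because it is built from elliptic R-matrices which intertwine the elliptic dynamical equation that degenerates to $M^\sX$), and both reduce to the identity on the appropriate asymptotic cone in the equivariant torus. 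A holomorphic operator on $\sA$ with the same asymptotics and commuting with $M^\sX$ must be the identity, which pins down the equality.

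\textbf{Main obstacle.} The hard part is not the abstract framework but the precise matching of normalizations and of the conjugation by stable envelopes. Two issues will have to be controlled: (i) the passage from the natural basis on $K(\sX^\sA)$ (where $\Mon_\sw$ is defined) to the stable basis on $K_\sT(\sX)$ must be reconciled with the fractional shifts $Z$, $H$ that appear in Theorem \ref{limitnormal}, and these involve $\intp{\ind_p \cdot \sw}$-type corrections which can change discontinuously as $\sw$ crosses a wall; verifying that the discontinuity of $Z$ and $H$ between $\sw$ and $\sw+\eps$ is exactly what is absorbed by the conjugation in the stable basis is delicate. (ii) The rigidity step requires sharp control over the behavior of $\Mon_\sw \cdot \Mon_{\sw+\eps}^{-1}$ at all infinities of $\sA$, not only in the direction of $\sw$; here one needs to use that $\Mon(a)$ is balanced in the equivariant variables (as noted in the proposition following the definition of $R^\sX_{\fC\to -\fC}$) and to combine the Newton polygon bounds from elliptic stable envelopes with the explicit description of the walls. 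I expect the cleanest route to (ii) is to invoke 3d mirror symmetry directly: transport the statement to the dual side, where it becomes the conjecture about the Kähler monodromy $\B_{\kappa(\sw)}$ of $\sX^!$, and close the argument by Theorem \ref{mainfactorizationexplicit} and the factorization already established for limits of elliptic stable envelopes into walls.
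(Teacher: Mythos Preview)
The paper does not prove this statement: it is explicitly labeled a \emph{conjecture} (attributed to Okounkov), appearing in a chapter the author describes as ``less formal than the others and \ldots devoted to outlining main directions of current research.'' There is therefore no paper proof to compare your proposal against.

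As for the proposal itself, the architecture is reasonable and matches the heuristics the paper sketches, but several of your steps are genuine gaps rather than routine checks. The most serious is the rigidity argument in step four: you assert that an operator commuting with $M^\sX(a,z)$ and having the correct asymptotics must be the identity, but this is exactly the kind of uniqueness statement that underlies the conjecture in the first place and is not supplied anywhere in the paper. The commutation of $\Mon_\sw\cdot\Mon_{\sw+\eps}^{-1}$ with the quantum difference connection is also asserted rather than proved; the elliptic $R$-matrix intertwines the \emph{elliptic} equation, and passing to the $q\to 0$ limit at a wall does not obviously produce an operator commuting with the K-theoretic $M^\sX$. Finally, your proposed closure via 3d mirror symmetry is circular: on the dual side the statement becomes the companion conjecture about $\B_{\kappa(\sw)}$, which the paper also leaves open (it is the conjecture immediately preceding the one you are addressing), so transporting to $\sX^!$ does not reduce to anything already established. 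Your outline is a plausible research program, not a proof.
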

The next conjecture follows from the previous one using \ref{mainfactorizationexplicit}:
\begin{conjecture}
In the stable basis the operator $S_\sw$ is conjugate to the R-matrix for $\sX^{\nu_\sw} \subset \sX$.
\end{conjecture}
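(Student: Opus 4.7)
The plan is to take the preceding conjecture as an input and identify $\Mon_\sw\cdot\Mon_{\sw+\eps}^{-1}$ geometrically by feeding it through Theorem~\ref{mainequiv} (in the normalized form of Theorem~\ref{limitnormal}). Recall that $\Mon(a)=\Stab^{\sX}_{\fC}\cdot(\Stab^{\sX}_{-\fC})^{-1}$ is built out of two elliptic stable envelopes with opposite equivariant chambers. Since the product of stable envelopes and their inverses is balanced in equivariant parameters, the limit $\lim_{q\to 0}\Mon(aq^\sw)$ is well defined, and on matrix elements it commutes with the product (after stripping out the common polarization factor $\vartheta(T^{1/2})$, which cancels in the ratio).

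First, I would apply Theorem~\ref{mainequiv}/\ref{limitnormal} separately to each factor shifted by $a\mapsto aq^\sw$, for some choice of infinitesimal K\"ahler direction $\fD$. This yields
\[
\lim_{q\to 0}\Mon(aq^\sw)\ =\ H\cdot\bigl(\Stab^{\sX^{\nu_\sw},K}_{\fC,\,\fD}\cdot(\Stab^{\sX^{\nu_\sw},K}_{-\fC,\,\fD})^{-1}\bigr)\cdot H^{-1}
\]
for the diagonal matrices $H$ of \ref{limitnormal}, together with some $z$-dependent diagonal conjugation that cancels because both factors carry opposite shifts of $z$. The bracket on the right is exactly the K-theoretic chamber R-matrix $\sR^{\sX^{\nu_\sw}}_{\fC\to-\fC}$ for the $\nu_\sw$-fixed subvariety, so $\Mon_\sw$ has been identified as that R-matrix up to diagonal conjugation.

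Next, I would treat $\Mon_{\sw+\eps}$. Choosing $\eps$ generic off the resonance arrangement $\Res(\sX)$, one has $\nu_{\sw+\eps}$ generic, hence $\sX^{\nu_{\sw+\eps}}=\sX^{\sA}$. Consequently both stable envelopes collapse to their diagonal restrictions and $\Mon_{\sw+\eps}$ becomes a diagonal matrix of monomials in the fixed point basis, coming solely from the $H$-factor in \ref{limitnormal}. Dividing, the ratio $\Mon_\sw\cdot\Mon_{\sw+\eps}^{-1}$ is then precisely $\sR^{\sX^{\nu_\sw}}_{\fC\to-\fC}$ up to a diagonal conjugation, and the preceding conjecture translates this into the claim that $S_\sw$ is conjugate to this R-matrix, which is the statement.

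The main obstacle is not the geometric identification but the bookkeeping of diagonal and monomial prefactors: one must check that the $H$- and $Z$-factors produced by the two applications of \ref{limitnormal} for the opposite chambers combine into a single diagonal conjugation, and that the leftover normalizations from $\bigwedge^\bullet(T^{1/2,\nu_\sw})^\vee$ cancel between $\Mon_\sw$ and $\Mon_{\sw+\eps}^{-1}$ (which should follow from $T^{1/2,\nu_\sw}=T^{1/2,\nu_{\sw+\eps}}$ only when $\eps$ is tangent to the resonance, so some care is needed in picking $\eps$). A secondary obstacle is legitimizing the step $\lim(AB^{-1})=(\lim A)(\lim B)^{-1}$: this requires a priori regularity of the relevant limits along the resonance locus, which can be obtained from the balancedness of normalized stable envelopes together with the factorization structure of Proposition~\ref{twopol}. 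Of course, a genuine proof also depends on first establishing the preceding conjecture identifying $S_\sw$ with $\Mon_\sw\cdot\Mon_{\sw+\eps}^{-1}$, which itself rests on the rigidity of the shift operator and on matching its equivariant poles with those predicted by the Wiener--Hopf factorization.
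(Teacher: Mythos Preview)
Your strategy — accept the preceding conjecture and then identify $\Mon_\sw\cdot\Mon_{\sw+\eps}^{-1}$ geometrically — is exactly the paper's. The paper's one-line justification is ``using Theorem~\ref{mainfactorizationexplicit}''. However, your execution via Theorem~\ref{limitnormal} contains a real gap, not just bookkeeping.

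Theorem~\ref{limitnormal} asserts
\[
\lim_{z\to 0_\fD}\; Z\Bigl(\lim_{q\to 0} T(aq^\sw,z)\Bigr)Z^{-1}\;=\;H\,A^{\sX^{\nu_\sw}}\,H^{-1},
\]
i.e.\ only the $z\to 0_\fD$ limit of the $Z$-conjugated object is identified with stable envelopes of $\sX^{\nu_\sw}$. You silently drop the $z\to 0$ and treat this as an identity for all $z$ up to a diagonal conjugation in $z$. That is false. In the explicit $T^*\P^1$ computation of Chapter~\ref{explicitexamples}, the $(2,2)$ entry of $\Mon_0$ is
\[
-\frac{(z-1)(z\hbar^2-1)(a_1-a_2)\sqrt{\hbar}}{(\hbar a_1-a_2)(z\hbar-1)^2},
\]
which is genuinely $z$-dependent; no diagonal conjugation removes $z$ from a diagonal entry. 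Likewise your claim that $\Mon_{\sw+\eps}$ is a diagonal matrix of monomials is contradicted by the same example: $\Mon_{0+\eps}$ there is the full K-theoretic chamber R-matrix for $\sX$, with nonzero off-diagonal entries such as $-a_2^2(\hbar-1)/\bigl((a_1-a_2)\hbar a_1\bigr)$.

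What actually happens is that $\lim_{q\to 0}T(aq^\sw,z)$ admits a \emph{factorization}, not merely a diagonal conjugation: via 3d mirror symmetry an equivariant shift for $\sX$ is a K\"ahler shift for $\sX^!$, so Theorem~\ref{mainfactorization} gives a clean splitting into a piece depending on $(z,\hbar)$ (the K-theoretic stable envelope of $\sX^!$) times a piece depending on $(a,\hbar)$. Theorem~\ref{mainfactorizationexplicit} then identifies the latter piece, up to the diagonal $H$-conjugation, with stable envelopes of $Y_\sw=(\sX^!)^{!,\nu_\sw}=\sX^{\nu_\sw}$. Combining the two chamber factorizations for $\fC$ and $-\fC$ and comparing $\sw$ against $\sw+\eps$ is what collapses the $(z,\hbar)$-pieces and leaves the R-matrix for $\sX^{\nu_\sw}$; this is exactly the mechanism behind the wall-crossing theorem in Section~\ref{wall-crossing}. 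In the paper's example this is the step ``untwisting by the stable envelopes of $\sX^!$ for different slopes on both sides'', which is a triangular (not diagonal) change of basis. So the obstacle you flag as ``bookkeeping of diagonal and monomial prefactors'' is in fact structural: the missing ingredient is Theorem~\ref{mainfactorization}, and one cannot bypass the mirror-dual factorization by invoking Theorem~\ref{limitnormal} alone.
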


The shift operator also takes the form of generalized qKZ equation.

\chapter{Explicit examples} \label{explicitexamples}

In this section we discuss how to perform computations explicitly for the simplest Nakajima varieties. 

\section{Nakajima quiver varieties}

One of the most important class of symplectic resolutions of singularities are Nakajima quiver varieties. 
There are many excellent papers and books \cite{Nak1, GinzburgLectures} about this construction, and here we just give a brief down-to-earth overview.

Nakajima quiver varieties are encoded by graphs.

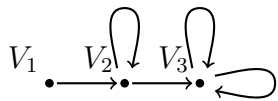
\begin{figure}[h]
    \centering
    \begin{tikzpicture}
    \draw[fill=black] (0,0) circle (0.05) node[anchor=south east] {$V_1$};
    \draw[fill=black] (1,0) circle (0.05) node[anchor=south east] {$V_2$};
    \draw[fill=black] (2,0) circle (0.05) node[anchor=south east] {$V_3$};
    \draw [thick, ->] (0.9,0.2) to[out=100, in=180] (1,1) to[out=0, in=80] (1.1,0.2);
    \draw [thick, ->] (1.9,0.2) to[out=100, in=180] (2,1) to[out=0, in=80] (2.1,0.2);
    \draw [thick, ->, rotate around={270: (2,0)}] (1.9,0.2) to[out=100, in=180] (2,1) to[out=0, in=80] (2.1,0.2);
    \draw [thick, ->] (0.1,0) -- (0.9,0);
    \draw [thick, ->] (1.1,0) -- (1.9,0);
    \end{tikzpicture}
    \caption{Nakajima quiver variety $T^*(\bigoplus \Hom(W_i, V_i) \oplus \Hom(V_1, V_2) \oplus \Hom(V_2, V_3) \oplus \End(V_2) \oplus \End(V_3)^{\oplus 2})//\prod GL(V_i)$}
    \label{Nakajima}
\end{figure}

For each vertex $V_i$ we introduce the framing space $W_i$.
The Nakajima quiver variety is defined as
\[
\sX = T^* Rep //\prod GL(V_i).,
\]
where the representation space of the graph is
\[
Rep = \bigoplus_i \Hom(W_i, V_i) \oplus \bigoplus_{i\to j} \Hom(V_i, V_j).
\]
In other words, they are Hamiltonian reductions for products of general linear groups of cotangent bundles to representations which are direct sums of defining representations and hom's between defining representations.
They also depend on the stability parameter which is a character
\[
\prod GL(V_i) \to \C^\times.
\]

Some of the simplest and most fundamental examples of Nakajima quiver varieties include Grassmannians and the Hilbert scheme of points in $\C^2$.

\section{Grassmannian}

Stable envelopes for the Grassmannian $T^*Gr(k,n)$ were constructed in \cite{AOElliptic} using abelianization.
The Grassmannian is a Nakajima quiver variety with a single vertex and no loops, as indicated in Figure \ref{grassmannian_quiver}
\begin{figure}[h]
    \centering
    \begin{tikzpicture}
    \draw [thick] (-0.1,-0.1)--(-0.1,0.1)--(0.1,0.1)--(0.1,-0.1)--cycle;
    \draw [thick] (1,0) circle (0.1);
    \draw [->, thick] (0.2,0.1) -- (0.8,0.1);
    \draw [<-, thick] (0.2,-0.1) -- (0.8,-0.1);
    \node at (-0.5,0) {$\C^n$};
    \node at (1.5,0) {$\C^k$};
    \node at (0.5,0.3) {$A$};
    \node at (0.5,-0.3) {$B$};
    \end{tikzpicture}
    \caption{Grassmannian: $\{A,B: AB=0\}//GL(n)$.}
    \label{grassmannian_quiver}
\end{figure}

Then we have
\[
X = T^* Gr(k,n) =  T^*\Hom(W,V)//GL(V).
\]
Let the torus
\[
A = \left\{
\begin{pmatrix}
a_1 & & \\
& \ddots & \\
 & & a_n
\end{pmatrix}
\right\}
\]
act on $W$ with the character
\[
W = a_1 + ... + a_n,
\]
and $G = \C^\times_{x_1} \times ... \times \C^\times_{x_n}$ act on $V = \C$ with the character $x_1 + ... + x_n$. 

Let us choose the polarization given by the base directions
\[
T^{1/2} = \overline{W} V - V\overline V.
\]
The character of the tangent bundle is
\[
T = \overline W V + \hbar^{-1} W \overline V - (1 + \hbar^{-1}) V \overline V = 
\sum \left(\frac{x_i}{a_j} + \hbar^{-1} \frac{a_j}{x_i}
\right) - (1+\hbar^{-1}) \sum \frac{x_i}{x_j}.
\]
The sum corresponds to the representation of the quiver, and the subtraction corresponds to factorization and the moment map equations.

Fixed points in $\sX$ are the coordinate $k$-dimensional subspaces in $W$ and are parametrized by $k$-subsets $\alpha \{1, ..., n\}$

Assume that the positive chamber is
\[
a_i = \xi^i, \xi \to \infty.
\]

Then the tangent space at $x_i = a_{\alpha_i}$ has the repelling part
\[
T_{<0} =
\sum_{\alpha_i<j} \frac{x_{i}}{a_j} + \sum_{\alpha_i>j} \hbar^{-1} \frac{a_j}{x_{i}}
 - (1+\hbar^{-1}) \sum_{\alpha_i < \alpha_j} \frac{x_{i}}{x_{j}}
\]
The $\sA$-fixed part of the polartization is
\[
T^{1/2}_{=0} = \sum_i \frac{x_{i}}{a_{i}} - k.
\]

The formula for the off-shell elliptic stable envelopes in \cite{AOElliptic} gives
\[
\Stab(\alpha) = \vartheta(T_{<0}) \cdot \left(
\text{$z,\hbar$-refinement of $T^{1/2}_{=0}$}
\right).
\]
Let us call the first factor $\Sh = \vartheta(T_{<0})$ as Shenfeld factor, and
the second factor $\Rt$, which needs refinement, as the "root" factor (it was motivated by the examples f the Hilbert scheme, considered later).

The refinement by K\"ahler parameters and $\hbar$ is uniquely fixed from the quasiperiods.

\begin{example}
For example, consider the case $\sX = T^* Gr(2,4)$ and the point $\alpha = \{1,3\} \subset \{1,2,3,4\}$. At the fixed point, we have
\[
x_1 = a_1, \ \ x_2 = a_3.
\]
Then
\[
\Sh = \vartheta(T_{<0}) = \frac{\vartheta(x_1/a_2)\vartheta(x_1/a_3)\vartheta(x_1/a_4)\vartheta(x_2/a_4)\vartheta(a_1/hx_2)\vartheta(a_2/hx_2)}{\vartheta(x_1/x_2)\vartheta(x_1/hx_2)}
\]
The "root" factor must have the form
\[
\Rt = \frac{\vartheta(x_1/a_1 \cdot z^{s_1} \hbar^{s_2})}{\vartheta( z^{s_1} \hbar^{s_2})} \cdot \frac{\vartheta(x_2/a_2 \cdot z^{s_3} \hbar^{s_4})}{\vartheta( z^{s_3} \hbar^{s_4})}, \ \ s_1, s_2, s_3, s_4 \in \Z.
\]
The unknown numbers $s_1, s_2, s_3, s_4$ are uniquely fixed from the quasiperiods. Shift $z \to zq$ should produce the factor of the determinant of the tautological bundle $(x_1 x_2)^{-1}$, thus
\[
s_1 = -1, \ s_3 = -1,
\]
and the shifts $x_i \to x_i q$ should not produce $\hbar$ as automorphy factors, thus 
\[
s_2 = -1, s_4 = 0.
\]

This numbers $s_1, s_2, s_3, s_4$ are the only exponents for which the class $\Sh\cdot \Rt$ has the automorphy factors that are invariant under permutations $x_1 \leftrightarrow x_2$.
\end{example}

\subsection{Factorization}

Consider the case $\sX = T^* \P^2$.
The normalized matrix of stable envelopes is
\[
\tilde T =   \left[ \begin {array}{ccc} 1&{\frac {\vartheta ( {{\hbar}}^{-1}
 ) }{\vartheta ( z{{\hbar}}^{2} ) } \frac{\vartheta ( {
 {z{{\hbar}}^{2}a_{{1}}}/{a_{{2}}}} )  }{ \vartheta
 \left( { {a_{{2}}}/{a_{{1}}}} \right)   }}&{\frac {
\vartheta ( {{\hbar}} ) }{\vartheta \left( z{{\hbar}}^
{2} \right) } \frac{\vartheta \left( { {{\hbar}\,a_{{3}}/a_2}}
 \right) \vartheta \left( { {z{{\hbar}}^{2}a_{{1}}}/{a_{{3}}}}
 \right)}  { \vartheta ( { {a_{{3}}}/{a_{{1}}}} ) 
    \vartheta ( { {a_{{3}}}/{a_{{2}}}}
 )   }}\\ \noalign{\medskip}0&1&\frac {\vartheta
 ( {{\hbar}}^{-1} ) }{\vartheta \left( {\hbar}\,z
 \right) } \frac{\vartheta \left( { {{\hbar}\,za_{{2}}}/{a_{{3}}}}
 \right) }{  \vartheta \left( { {a_{{3}}}/{a_{{2}}}} \right) 
 }\\ \noalign{\medskip}0&0&1\end {array} \right] 
\]

The limit to the wall $\sw = 0 \in \Lie_\R(\sK)$ has the factorization
\begin{multline}
\lim_{q \to 0} \tilde T(a,z) =    \left[ \begin {array}{ccc} 1&{\frac { \left( {\hbar}-1 \right) 
 \left( z{{\hbar}}^{2}a_{{1}}-a_{{2}} \right) }{\sqrt {{\hbar}}
 \left( z{{\hbar}}^{2}-1 \right)  \left( a_{{1}}-a_{{2}} \right) }}
&{\frac { \left( {\hbar}\,a_{{3}}-a_{{2}} \right)  \left( {\hbar
}-1 \right)  \left( z{{\hbar}}^{2}a_{{1}}-a_{{3}} \right) }{{\hbar}\, \left( z{{\hbar}}^{2}-1 \right)  \left( a_{{1}}-a_{{3}}
 \right)  \left( a_{{2}}-a_{{3}} \right) }}\\ \noalign{\medskip}0&1&{
\frac { \left( {\hbar}-1 \right)  \left( {\hbar}\,za_{{2}}-a_{{3
}} \right) }{\sqrt {{\hbar}} \left( {\hbar}\,z-1 \right) 
 \left( a_{{2}}-a_{{3}} \right) }}\\ \noalign{\medskip}0&0&1
\end {array} \right]  = \\
=
 \left[ \begin {array}{ccc} 1&{\frac {{{\hbar}}^{3/2} \left( {\hbar}-1 \right) z}{z{{\hbar}}^{2}-1}}&-{\frac {{\hbar}\, \left( 
{\hbar}-1 \right) z}{z{{\hbar}}^{2}-1}}\\ \noalign{\medskip}0&1&
{\frac {\sqrt {{\hbar}} \left( {\hbar}-1 \right) z}{{\hbar}\,
z-1}}\\ \noalign{\medskip}0&0&1\end {array} \right] 
\cdot
 \left[ \begin {array}{ccc} 1&{\frac { \left( {\hbar}-1 \right) a_{
{2}}}{\sqrt {{\hbar}} \left( a_{{1}}-a_{{2}} \right) }}&{\frac {a_{
{3}} \left( {\hbar}\,a_{{3}}-a_{{2}} \right)  \left( {\hbar}-1
 \right) }{{\hbar}\, \left( a_{{1}}-a_{{3}} \right)  \left( a_{{2}}
-a_{{3}} \right) }}\\ \noalign{\medskip}0&1&{\frac { \left( {\hbar}
-1 \right) a_{{3}}}{\sqrt {{\hbar}} \left( a_{{2}}-a_{{3}} \right) 
}}\\ \noalign{\medskip}0&0&1\end {array} \right] = \\
=
 \left[ \begin {array}{ccc} 1&{\frac {{\hbar}-1}{\sqrt {{\hbar}}
 \left( z{{\hbar}}^{2}-1 \right) }}&{\frac {1-{\hbar}}{z{{\hbar}}^{2}-1}}\\ \noalign{\medskip}0&1&{\frac {{\hbar}-1}{\sqrt {{
\hbar}} \left( {\hbar}\,z-1 \right) }}\\ \noalign{\medskip}0&0&1
\end {array} \right] 
 \cdot
  \left[ \begin {array}{ccc} 1&{\frac { \left( {\hbar}-1 \right) a_{
{1}}}{\sqrt {{\hbar}} \left( a_{{1}}-a_{{2}} \right) }}&{\frac {
 \left( {\hbar}\,a_{{3}}-a_{{2}} \right)  \left( {\hbar}-1
 \right) a_{{1}}}{{\hbar}\, \left( a_{{1}}-a_{{3}} \right)  \left( 
a_{{2}}-a_{{3}} \right) }}\\ \noalign{\medskip}0&1&{\frac { \left( {
\hbar}-1 \right) a_{{2}}}{\sqrt {{\hbar}} \left( a_{{2}}-a_{{3}}
 \right) }}\\ \noalign{\medskip}0&0&1\end {array} \right] 
\end{multline}
The first factorization involves stable envelopes for the ample slope for $\sX^!$ and stable envelopes for the slope $+\eps$ for $\sX$. The other involves the anti-ample slope for $\sX^!$ and the slope $-\eps$ for $\sX$. It implies that the wall R-matrix for $\sX$ is related to the transition matrix between ample and anti-ample slopes for $\sX^!$.

\subsection{Difference equations}

One of the most important problems is the computation of the gluing matrix.
The K-theory of $\bigsqcup_{k=0}^n T^* Gr(k,n)$ is the representation $\otimes_i \C^2(a_i)$ of the group  $\cU_\hbar(sl_2)$. The K-theory of $T^* \P^{n-1}$ is the subspace of the weight one below the highest weight. It follows that the gluing operator for $T^* \P^{n-1}$ can be obtained from the part of degree 1 as
\[
\Glue(z) = 1 + \frac{z}{1-(-\sqrt\hbar)^n \cdot z} \Glue_\text{degree 1}.
\]

The degree-1 part of the gluing matrix can be computed directly:
\[
\Glue_\text{degree $1$} =  \left[ \begin {array}{cc} {\frac { \left( {\hbar}\,a_{{1}}-a_{{2}}
 \right)  \left( {\hbar}-1 \right) }{ \left( a_{{1}}-a_{{2}}
 \right) {\hbar}}}&{\frac {a_{{2}} \left( {\hbar}\,a_{{2}}-a_{{1
}} \right)  \left( {\hbar}-1 \right) }{{\hbar}\,a_{{1}} \left( a
_{{1}}-a_{{2}} \right) }}\\ \noalign{\medskip}-{\frac {a_{{1}} \left( 
{\hbar}\,a_{{1}}-a_{{2}} \right)  \left( {\hbar}-1 \right) }{
 \left( a_{{1}}-a_{{2}} \right) a_{{2}}{\hbar}}}&-{\frac { \left( {
\hbar}\,a_{{2}}-a_{{1}} \right)  \left( {\hbar}-1 \right) }{
 \left( a_{{1}}-a_{{2}} \right) {\hbar}}}\end {array} \right] 
\]

For this variety the operator of the quantum difference equation can be obtained as
\[
M(z) = \Glue(zq) \cO(1),
\]
and, explicitly, 
\[
M(z) =   \left[ \begin {array}{cc} -{\frac {a_{{1}} \left( {{\hbar}}^{2}qza
_{{2}}+ \left( -qza_{{1}}-zqa_{{2}}+a_{{1}}-a_{{2}} \right) {\hbar}
+zqa_{{2}} \right) }{{\hbar}\, \left( {\hbar}\,qz-1 \right) 
 \left( a_{{1}}-a_{{2}} \right) }}&-{\frac {za_{{2}}^{2}q \left( {
\hbar}\,a_{{2}}-a_{{1}} \right)  \left( {\hbar}-1 \right) }{{
\hbar}\, \left( {\hbar}\,qz-1 \right)  \left( a_{{1}}-a_{{2}}
 \right) a_{{1}}}}\\ \noalign{\medskip}{\frac {za_{{1}}^{2}q \left( 
{\hbar}-1 \right)  \left( {\hbar}\,a_{{1}}-a_{{2}} \right) }{{
\hbar}\, \left( {\hbar}\,qz-1 \right) a_{{2}} \left( a_{{1}}-a_{
{2}} \right) }}&{\frac {a_{{2}} \left( {{\hbar}}^{2}qza_{{1}}+
 \left( -qza_{{1}}-zqa_{{2}}-a_{{1}}+a_{{2}} \right) {\hbar}+qza_{{
1}} \right) }{{\hbar}\, \left( {\hbar}\,qz-1 \right)  \left( a_{
{1}}-a_{{2}} \right) }}\end {array} \right] 
\]

Let $S(z)$ be the shift operator corresponding to the cocharacter $a_2 \to a_2 q$. It is defined uniquely up to a scalar by the commutativity with the quantum difference equation:
\[
\left.M(z)\right|_{a_2 \to a_2 q} S(z) = S(zq) M(z),
\]
and we obtain
\[
S(z) =   \left[ \begin {array}{cc} -{\frac { \left( a_{{1}}-a_{{2}} \right) 
\sqrt {{\hbar}}q}{{\hbar}\,a_{{1}}-qa_{{2}}}}+{\frac { \left( {
\hbar}-1 \right) ^{2}a_{{2}}a_{{1}}{q}^{2}z}{\sqrt {{\hbar}}
 \left( qa_{{2}}-a_{{1}} \right)  \left( {\hbar}\,a_{{1}}-qa_{{2}}
 \right) }}&{\frac {a_{{2}}^{2}{q}^{2} \left( {\hbar}-1 \right) 
 \left( {\hbar}\,a_{{2}}-a_{{1}} \right) z}{\sqrt {{\hbar}}
 \left( {\hbar}\,a_{{1}}-qa_{{2}} \right) a_{{1}} \left( qa_{{2}}-a
_{{1}} \right) }}\\ \noalign{\medskip}-{\frac {a_{{1}}^{2} \left( {
\hbar}-1 \right) z}{ \left( qa_{{2}}-a_{{1}} \right) a_{{2}}\sqrt {
{\hbar}}}}&-{\frac { \left( {\hbar}\,a_{{2}}-a_{{1}} \right) z}{
\sqrt {{\hbar}} \left( qa_{{2}}-a_{{1}} \right) }}\end {array}
 \right] 
\]

Let us now compute the same operators from the monodromy.

The matrix of elliptic stable envelopes in the basis of the fixed points is given by
\[
\Stab =  \left[ \begin {array}{cc} \vartheta ( { {a_{{2}}}/({{\hbar}\,
a_{{1}}}}) ) &{\frac {\vartheta \left( {{\hbar}}^{-1} \right) \vartheta ( { {z{\hbar}\,
a_{{1}}}/{a_{{2}}}} )  }{
\vartheta \left( z{\hbar} \right) }}\\ \noalign{\medskip}0&\vartheta ( {
 {a_{{2}}}/{a_{{1}}}} ) \end {array} \right] 
\]
and for the flop
\[
 \Stab_{\flop} = \left[ \begin {array}{cc} {\frac {\vartheta \left( z{{\hbar}}^{2}
 \right) \vartheta ( { {a_
{{2}}}/{a_{{1}}}} ) }{\vartheta \left( z{\hbar} \right) } }&0\\ \noalign{\medskip}{\frac {\vartheta
 \left( {{\hbar}}^{-1} \right) \vartheta
 ( { {za_{{2}}{\hbar}}/{a_{{1}}}} ) }{\vartheta \left( z \right) } }&{\frac {\vartheta
 \left( z{\hbar} \right) \vartheta ( {
 {a_{{2}}}/({{\hbar}\,a_{{1}}}}) ) }{\vartheta \left( z \right) } }\end {array} \right] 
\]

The monodromy in $z$ is
\[
\Mon(z) = \det T^{1/2} \cdot \Stab^{-1} \cdot  \Stab_{\flop} \cdot \left(\det T^{1/2}\right)^{-1}.
\]

We now compute its limit to a wall $\sw = 0 \in H^2(\sX, \R)$:
\[
\Mon_0 =  \left[ \begin {array}{cc} -{\frac { \left( {\hbar}\,a_{{2}}-a_{{1}
} \right)  \left( z{\hbar}-1 \right) }{ \left( a_{{1}}-a_{{2}}
 \right) {\hbar}\, \left( -1+z \right) }}&-{\frac { \left( {\hbar}-1 \right)  \left( z{\hbar}\,a_{{1}}-a_{{2}} \right) }{
 \left( a_{{1}}-a_{{2}} \right) {\hbar}\, \left( -1+z \right) }}
\\ \noalign{\medskip}{\frac { \left( {\hbar}-1 \right)  \left( za_{
{2}}{\hbar}-a_{{1}} \right) }{ \left( a_{{1}}-a_{{2}} \right) {\hbar}\, \left( -1+z \right) }}&{\frac { \left( {\hbar}\,a_{{1}}-a_{
{2}} \right)  \left( z{\hbar}-1 \right) }{ \left( a_{{1}}-a_{{2}}
 \right) {\hbar}\, \left( -1+z \right) }}\end {array} \right],
\]
and 
\[
\Mon_{0+\eps} =  \left[ \begin {array}{cc} {\frac {-{\hbar}\,a_{{2}}+a_{{1}}}{{\hbar}\, \left( a_{{1}}-a_{{2}} \right) }}&-{\frac {{a_{{2}}}^{2}
 \left( {\hbar}-1 \right) }{ \left( a_{{1}}-a_{{2}} \right) {\hbar}\,a_{{1}}}}\\ \noalign{\medskip}{\frac {{a_{{1}}}^{2} \left( {
\hbar}-1 \right) }{{\hbar}\, \left( a_{{1}}-a_{{2}} \right) a_{{
2}}}}&{\frac {{\hbar}\,a_{{1}}-a_{{2}}}{{\hbar}\, \left( a_{{1}}
-a_{{2}} \right) }}\end {array} \right], 
\]
and we have
\[
\Mon_{0+\eps} = \lim_{z\to 0} \Mon_0, \ \ \Mon_{0-\eps} = \lim_{z\to \infty} \Mon_0
\]
The operators $\Mon_{\pm \eps}$ on $K_\sT(\sX)$ are the R-matrices for the infinitesimal slopes, and if we divide $\Mon_0$ by either of them, we obtain the wall operator in the basis of fixed points:
\[
\Glue =  \left[ \begin {array}{cc} {\frac { \left( -1+ \left( {{\hbar}}^{2}-{
\hbar}+1 \right) z \right) a_{{1}}-za_{{2}}{\hbar}+a_{{2}}}{ \left( 
a_{{1}}-a_{{2}} \right)  \left( -1+z \right) }}&{\frac {z \left( {\hbar}\,a_{{2}}-a_{{1}} \right) a_{{2}} \left( -1+{\hbar} \right) }{a_{{
1}} \left( a_{{1}}-a_{{2}} \right)  \left( -1+z \right) }}
\\ \noalign{\medskip}-{\frac {a_{{1}}z \left( -1+{\hbar} \right) 
 \left( {\hbar}\,a_{{1}}-a_{{2}} \right) }{a_{{2}} \left( a_{{1}}-a_{
{2}} \right)  \left( -1+z \right) }}&{\frac { \left( 1+ \left( -{{\hbar}}^{2}+{\hbar}-1 \right) z \right) a_{{2}}+a_{{1}} \left( {\hbar}
\,z-1 \right) }{ \left( a_{{1}}-a_{{2}} \right)  \left( -1+z \right) }
}\end {array} \right] 
\]
In the stable basis we get
\[
G_{stable} =  \left[ \begin {array}{cc} {\frac {-1+ \left( \hbar^{2}-h+1 \right) z}{-
1+z}}&-{\frac { \left( -1+h \right) \sqrt{\hbar}z}{-1+z}}
\\ \noalign{\medskip}-{\frac { \left( -1+h \right) \sqrt{\hbar}z}{-1+z}}&
{\frac {z\hbar-1}{-1+z}}\end {array} \right],
\]
or, if we express it as an operator from the stable envelopes with slope $-\eps$ to the slope $\eps$, we get the R-matrix for $\sX^! \cong T^*\P^1$.
\[
\Glue_{Stab^{[-\eps]} \to Stab^{[+\eps]}} =  \left[ \begin {array}{cc} {\frac {z{\hbar}-1}{-1+z}}&-{\frac {
\sqrt {{\hbar}} \left( {\hbar}-1 \right) z}{-1+z}}
\\ \noalign{\medskip}-{\frac {{\hbar}-1}{ \left( -1+z \right) 
\sqrt {{\hbar}}}}&{\frac {z{\hbar}-1}{-1+z}}\end {array}
 \right] 
\]

For the computation of the shift operator, we also need the stable envelopes for the opposite chamber:
\[
\Stab_{-\fC} =  \left[ \begin {array}{cc} \vartheta \left( {\frac {a_{{1}}}{a_{{2}}}}
 \right) &0\\ \noalign{\medskip}{\frac {\vartheta \left( {{\hbar}}^{-1
} \right) }{\vartheta \left( z{\hbar} \right) }\vartheta \left( {\frac {z
a_{{2}}{\hbar}}{a_{{1}}}} \right) }&\vartheta \left( {\frac {a_{{1}}}{
{\hbar}\,a_{{2}}}} \right) \end {array} \right] 
\]
The monodromy
\[
\Mon = \Stab_{\fC} \cdot \Stab_{\fC}^{-1}
\]
is now acting on $K_\sT(\sX)$.
The limit is
\[
\lim_{q \to 0} \Mon =  \left[ \begin {array}{cc} -{\frac { \left( a_{{1}}-a_{{2}} \right) 
\sqrt {{\hbar}}}{{\hbar}\,a_{{1}}-a_{{2}}}}&{\frac { \left( {
\hbar}\,za_{{1}}-a_{{2}} \right)  \left( {\hbar}-1 \right) }{
 \left( {\hbar}\,a_{{1}}-a_{{2}} \right)  \left( z{\hbar}-1
 \right) }}\\ \noalign{\medskip}{\frac { \left( {\hbar}-1 \right) 
 \left( za_{{2}}{\hbar}-a_{{1}} \right) }{ \left( {\hbar}\,a_{{1
}}-a_{{2}} \right)  \left( z{\hbar}-1 \right) }}&-{\frac { \left( -
1+z \right)  \left( z{{\hbar}}^{2}-1 \right)  \left( a_{{1}}-a_{{2}
} \right) \sqrt {{\hbar}}}{ \left( {\hbar}\,a_{{1}}-a_{{2}}
 \right)  \left( z{\hbar}-1 \right) ^{2}}}\end {array} \right],
\]
and this is the shift operator written in the stable basis for the dual variety.
Untwisting bu the stable envelopes of $\sX^!$ for different slopes on both sides, we obtain the shift operator, which is equal to the R-matrix:
\[
S_{Stab^{[-\eps], \sX^!} \to Stab^{[+\eps], \sX^!}} = 
 \left[ \begin {array}{cc} {\frac {a_{{1}}-a_{{2}}}{{\hbar}\,a_{{1}
}-a_{{2}}}}&-{\frac { \left( -1+{\hbar} \right) a_{{1}}}{ \left( {
\hbar}\,a_{{1}}-a_{{2}} \right) \sqrt {{\hbar}}}}
\\ \noalign{\medskip}-{\frac { \left( -1+{\hbar} \right) a_{{2}}}{
 \left( {\hbar}\,a_{{1}}-a_{{2}} \right) \sqrt {{\hbar}}}}&{
\frac {a_{{1}}-a_{{2}}}{{\hbar}\,a_{{1}}-a_{{2}}}}\end {array}
 \right] 
\]

\section{Hilbert scheme}
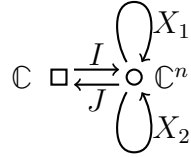
\begin{figure}[h]
    \centering
    \begin{tikzpicture}
    \draw [thick] (-0.1,-0.1)--(-0.1,0.1)--(0.1,0.1)--(0.1,-0.1)--cycle;
    \draw [thick] (1,0) circle (0.1);
    \draw [->, thick, rotate around = {90: (1,0)}] (1.2,0.1) to [out=30, in=90] (2,0) to [out=270, in=330] (1.2,-0.1);
    \draw [<-, thick, rotate around = {-90: (1,0)}] (1.2,0.1) to [out=30, in=90] (2,0) to [out=270, in=330] (1.2,-0.1);
    \draw [->, thick] (0.2,0.1) -- (0.8,0.1);
    \draw [<-, thick] (0.2,-0.1) -- (0.8,-0.1);
    \node at (-0.5,0) {$\C$};
    \node at (1.5,0) {$\C^n$};
    \node at (0.5,0.3) {$I$};
    \node at (0.5,-0.3) {$J$};
    \node at (1.5,0.7) {$X_1$};
    \node at (1.5,-0.7) {$X_2$};
    \end{tikzpicture}
    \caption{Hilbert scheme: $\{X_1, X_2, I, J: [X_1, X_2]+IJ=0\}//GL(n)$.}
    \label{fig:my_label}
\end{figure}

Let the character of the tautological bundle be
\[
V = \sum_{i=1}^n x_i.
\]
The tangent bundle as a function of Chern roots of the tautological bundle can be written as
\begin{multline}
T(x_1, ..., x_n) = V + t_1 t_2  \overline V -(1-t_1)(1-t_2)V \overline V = \\
= \sum_i (x_i+\frac{t_1 t_2}{x_i}) + \sum_{i,j}\left(
t_1 \frac{x_i}{x_j} + t_2 \frac{x_i}{x_j} - \frac{x_i}{x_j} - t_1 t_2 \frac{x_i}{x_j}
\right).
\end{multline}
To get fibers at a fixed point $\lambda$ we should substitute contents of boxes instead of variables $x_i$. Let the content of $i$-th box in $\lambda$ be $c_i \in \Z$.

\subsection{K-theory}
The abelianization technique developed by \cite{Shenfeld, OkBethe} gives the following procedure for construction of the off-shell K-theoretic stable envelopes $\Stab(\lambda)$: take the part of $T(x_1, ..., x_n)$ which is reprelling after specialization to the diagram $\lambda$:
\[
T(x_1, ..., x_n)_{<0} = \sum_{c_i<0} x_i + \sum_{c_i>0} \frac{t_1 t_2}{x_i} + \sum_{1+c_i-c_j<0} t_1 \frac{x_i}{x_j} + \sum_{-1+c_i-c_j<0} t_2 \frac{x_i}{x_j} - \sum_{c_i-c_j<0}  \frac{x_i}{x_j} - \sum_{c_i-c_j<0} t_1 t_2 \frac{x_i}{x_j},
\]
and 
\[
S_\lambda = \bigwedge\nolimits^\bullet T(x_1, ... , x_n).
\]
Explicitly, 
\[
S_\lambda(x_1, ... ,x_n) = \underset{x_1, ..., x_n}{\Sym}
\frac{\prod\limits_{c_i<0} (1-x_i)  \prod\limits_{c_i>0} (1-\frac{t_1 t_2}{x_i})  \prod\limits_{1+c_i-c_j<0} (1-t_1 \frac{x_i}{x_j})  \prod\limits_{-1+c_i-c_j<0} (1-t_2 \frac{x_i}{x_j})}
{\prod\limits_{c_i-c_j<0}  (1-\frac{x_i}{x_j}) \prod\limits_{c_i-c_j<0} (1-t_1 t_2 \frac{x_i}{x_j})}
\]

\subsection{Elliptic cohomology}

The polarization is given by
\[
T^{1/2} = V - (1-t_1) V\overline V = \sum_i x_i + \sum_{i,j} \left(t_1 \frac{x_i}{x_j} - \frac{x_i}{x_j}\right)
\]

Let us start with $n=2$ and the diagram $\lambda = [2]$.
At this point, we have 
\[
x_1 = 1, x_2 = t_1^{-1}.
\]
Since we are interested in off-shell stable envelopes as functions of $x_i$ up to overall normalization, we will keep only terms which depend nontrivially on them.
Then we compute
\[
T^{1/2}_{>0} = 2t_1 + t_1 \frac{x_1}{x_2}-\frac{x_1}{x_2}
\]
\[
T^{1/2}_{<0} = x_2 - \frac{x_2}{x_1}
\]
\[
T^{1/2}_{=0} = x_1 - 2 + t_1 \frac{x_2}{x_1}
\]
The Shenfeld factor is just
\[
\Sh = \vartheta(T^{1/2}_{<0}) \cdot \vartheta(\hbar^{-1} T^{1/2, \vee}_{>0}) = \frac{\vartheta(t_2)^2 \vartheta(x_2) \vartheta(t_2 x_2/x_1)}{\vartheta(x_2/x_1) \vartheta(t_1 t_2 x_2/x_1)}
\]
However, the degree of this section is not invariant under permutations. The $T_{=0}^{1/2}$ part should be refined with parameters $z$ and $\hbar$, and this refinement is uniquely fixed by the quasiperiods, and we obtain the "root" part:
\[
\Rt = \frac{\vartheta(x_1 z^2)}{\vartheta(z^2) } \frac{\vartheta(t_1 \frac{x_2}{x_1} z)}{ \vartheta(z)}
\]

Consider now the diagram $\lambda = [1,1]$. Analogously, we have
\[
x_1 = 1, x_2 = t_2^{-1}
\]
\[
T_{<0} = -\frac{x_1}{x_2}
\]
\[
T_{>0} = 2t_1 + x_2 + t_1 \frac{x_2}{x_1} - \frac{x_2}{x_1}.
\]
\[
T_{=0} = x_1 - 2 + t_1 \frac{x_1}{x_2}.
\]
\[
\Sh = \vartheta(T^{1/2}_{<0}) \cdot \vartheta(\hbar^{-1} T^{1/2, \vee}_{>0}) = \frac{\vartheta(t_2)^2 \vartheta(t_1 t_2/x_2) \vartheta(t_2 x_1/x_2)}{\vartheta(x_1/x_2) \vartheta(t_1 t_2 x_1/x_2)}
\]
The "root" part in this case is
\[
\Rt = \frac{\vartheta(x_1 z^2 t_1 t_2)}{\vartheta(z^2 t_1 t_2)} \frac{\vartheta(t_2 \frac{x_2}{x_1} z)}{\vartheta(z)}.
\]
Note that the $x_1$-term was deformed by $\hbar$ since otherwise after shifting $x_i$ by $q$ we would obtain $\hbar$ as a quasiperiod.

For $n\geq 4$ there are non single-hook Young diagrams, and the corresponding fixed points in the abelianization of the Hilbert scheme are not isolated, and we need to use special tricks by introducing larger equivariant torus and deforming the cycles to make them equivariant with respect to the larger torus, see \cite{SmirnovElliptic}.

\subsection{Factorization}

For $n=2$ we get the following matrix of elliptic stable envelopes:
\[
\Stab =  \left[ \begin {array}{cc} \vartheta \left( t_{{2}} \right) \vartheta
 \left( t_{{2}}^{2} \right) &{\frac { \vartheta \left( t_{{2}}
 \right)^{2}\vartheta \left( t_{{1}}t_{{2}} \right) \vartheta ( { 
{t_{{2}}z}/{t_{{1}}}} )  }{\vartheta
 \left( t_{{1}} \right) \vartheta \left( z \right) }}+{\frac {\vartheta \left( t_{{2}} \right) 
\vartheta \left( t_{{1}}t_{{2}} \right) \vartheta ( t_{{2}}{z}^{2}) \vartheta \left( t_{{1}}t_{{2}}z \right) \vartheta \left( {
 {t_{{2}}}/{t_{{1}}}} \right) }{\vartheta \left( t_{{1}
}^{-1} \right) \vartheta \left( {z}^{2}t_{{1}}t_{{2}} \right) \vartheta
 \left( z \right) }  }
\\ \noalign{\medskip}0&\vartheta \left( t_{{2}} \right) \vartheta \left( {
 {t_{{2}}}/{t_{{1}}}} \right)  \end {array} \right],
\]
In the limit to the wall $\sw = 1/2$ we get
\begin{multline}
\lim_{q\to 0} \Stab(a, zq^{1/2}) =   \left[ \begin {array}{cc} {\frac { \left( -1+t_{{2}} \right) ^{2}
 \left( t_{{2}}+1 \right) }{{t_{{2}}}^{3/2}}}&{\frac { \left( {z}^{2}{
t_{{2}}}^{2}-1 \right)  \left( -1+t_{{2}} \right)  \left( t_{{1}}t_{{2
}}-1 \right) \sqrt {t_{{1}}}}{ \left( {z}^{2}t_{{1}}t_{{2}}-1 \right) 
{t_{{2}}}^{2}}}\\ \noalign{\medskip}0&-{\frac { \left( -1+t_{{2}}
 \right)  \left( -t_{{2}}+t_{{1}} \right) }{t_{{2}}\sqrt {t_{{1}}}}}
\end {array} \right]  = \\
=
\left[ \begin {array}{cc} 1&{\frac {t_{{1}}{z}^{2} \left( t_{{1}}t_{{
2}}-1 \right) }{{z}^{2}t_{{1}}t_{{2}}-1}}\\ \noalign{\medskip}0&1
\end {array} \right] \cdot  \left[ \begin {array}{cc} {\frac { \left( -1+t_{{2}} \right) ^{2}
 \left( t_{{2}}+1 \right) }{{t_{{2}}}^{3/2}}}&{\frac { \left( -1+t_{{2
}} \right)  \left( t_{{1}}t_{{2}}-1 \right) \sqrt {t_{{1}}}}{{t_{{2}}}
^{2}}}\\ \noalign{\medskip}0&-{\frac { \left( -1+t_{{2}} \right) 
 \left( -t_{{2}}+t_{{1}} \right) }{t_{{2}}\sqrt {t_{{1}}}}}
\end {array} \right] = \\
=
 \left[ \begin {array}{cc} 1&{\frac {t_{{1}}t_{{2}}-1}{t_{{2}} \left( 
{z}^{2}t_{{1}}t_{{2}}-1 \right) }}\\ \noalign{\medskip}0&1\end {array}
 \right]  \cdot  \left[ \begin {array}{cc} {\frac { \left( -1+t_{{2}} \right) ^{2}
 \left( t_{{2}}+1 \right) }{{t_{{2}}}^{3/2}}}&{\frac { \left( t_{{1}}t
_{{2}}-1 \right)  \left( -1+t_{{2}} \right) }{t_{{2}}\sqrt {t_{{1}}}}}
\\ \noalign{\medskip}0&-{\frac { \left( -1+t_{{2}} \right)  \left( -t_
{{2}}+t_{{1}} \right) }{t_{{2}}\sqrt {t_{{1}}}}}\end {array} \right] 
\end{multline}

The first factorization corresponds to the slope $\sw = 1/2 + \eps$, the second to $\sw = 1/2 - \eps$.

\subsection{Difference equations}

Let us compute for $n=2$.
The operator of classical multiplication
\[
\cO(1) =  \left[ \begin {array}{cc} {t_{{2}}}^{-1}&0\\ \noalign{\medskip}0&{t_{
{1}}}^{-1}\end {array} \right]
\]
The K-theoretic off-shell stable envelopes are (so-called  {\it off-shell Bethe vector}) given by
\[
\Stab^{K}_+([2]) = -{\frac { \left( x_{{2}}-1 \right)  \left( t_{{2}}-1 \right) ^{2}
 \left( t_{{2}}x_{{2}}-x_{{1}} \right) }{ \left( -x_{{2}}+x_{{1}}
 \right)  \left( t_{{1}}t_{{2}}x_{{2}}-x_{{1}} \right) }}-{\frac {
 \left( x_{{1}}-1 \right)  \left( t_{{2}}-1 \right) ^{2} \left( t_{{2}
}x_{{1}}-x_{{2}} \right) }{ \left( x_{{2}}-x_{{1}} \right)  \left( t_{
{1}}t_{{2}}x_{{1}}-x_{{2}} \right) }}
\]
\[
\Stab^{K}_+([1,1]) = {\frac { \left( t_{{1}}t_{{2}}-x_{{2}} \right)  \left( t_{{2}}-1
 \right) ^{2} \left( t_{{2}}x_{{1}}-x_{{2}} \right) }{ \left( -x_{{2}}
+x_{{1}} \right)  \left( t_{{1}}t_{{2}}x_{{1}}-x_{{2}} \right) }}+{
\frac { \left( t_{{1}}t_{{2}}-x_{{1}} \right)  \left( t_{{2}}-1
 \right) ^{2} \left( t_{{2}}x_{{2}}-x_{{1}} \right) }{ \left( x_{{2}}-
x_{{1}} \right)  \left( t_{{1}}t_{{2}}x_{{2}}-x_{{1}} \right) }}
\]
\[
\Stab^{K}_-([2]) = {\frac { \left( -1+t_{{1}} \right) ^{2} \left( t_{{1}}x_{{1}}-x_{{2}}
 \right)  \left( t_{{1}}t_{{2}}-x_{{2}} \right) }{ \left( -x_{{2}}+x_{
{1}} \right)  \left( t_{{1}}t_{{2}}x_{{1}}-x_{{2}} \right) }}+{\frac {
 \left( -1+t_{{1}} \right) ^{2} \left( t_{{1}}x_{{2}}-x_{{1}} \right) 
 \left( t_{{1}}t_{{2}}-x_{{1}} \right) }{ \left( x_{{2}}-x_{{1}}
 \right)  \left( t_{{1}}t_{{2}}x_{{2}}-x_{{1}} \right) }}
\]
\[
\Stab^{K}_-([1,1]) = -{\frac { \left( x_{{2}}-1 \right)  \left( -1+t_{{1}} \right) ^{2}
 \left( t_{{1}}x_{{2}}-x_{{1}} \right) }{ \left( -x_{{2}}+x_{{1}}
 \right)  \left( t_{{1}}t_{{2}}x_{{2}}-x_{{1}} \right) }}-{\frac {
 \left( x_{{1}}-1 \right)  \left( -1+t_{{1}} \right) ^{2} \left( t_{{1
}}x_{{1}}-x_{{2}} \right) }{ \left( x_{{2}}-x_{{1}} \right)  \left( t_
{{1}}t_{{2}}x_{{1}}-x_{{2}} \right) }}.
\]
Here we did not care about normalization and shift by integer slope.

Stable envelopes for the slope $0<\sw<1/2$ can be obtained by restrictions of the off-shell stable envelopes and the Gram-Schmidt procedure:
\[
\Stab^{K}_+ =  \left[ \begin {array}{cc} {\frac { \left( t_{{2}}-1 \right) ^{2}
 \left( t_{{2}}+1 \right) }{t_{{2}}{t_{{1}}}^{2}}}&{\frac { \left( t_{
{2}}-1 \right)  \left( t_{{1}}t_{{2}}-1 \right) }{t_{{2}}{t_{{1}}}^{2}
}}\\ \noalign{\medskip}0&-{\frac { \left( -t_{{2}}+t_{{1}} \right) 
 \left( t_{{2}}-1 \right) }{t_{{2}}{t_{{1}}}^{2}}}\end {array}
 \right] 
\]
\[
\Stab^K_- =  \left[ \begin {array}{cc} {\frac { \left( -t_{{2}}+t_{{1}} \right) 
 \left( -1+t_{{1}} \right) }{{t_{{1}}}^{3}}}&0\\ \noalign{\medskip}{
\frac { \left( t_{{1}}t_{{2}}-1 \right)  \left( -1+t_{{1}} \right) }{{
t_{{1}}}^{3}}}&{\frac { \left( -1+t_{{1}} \right) ^{2} \left( 1+t_{{1}
} \right) }{{t_{{1}}}^{3}}}\end {array} \right] 
\]

As in the case of the projective space,
the gluing matrix in degree 1 can be computed directly by localization
\[
 \Glue_{degree 1} = \left[ \begin {array}{cc} {\frac { \left( t_{{1}}t_{{2}}-1 \right) 
 \left( -1+t_{{1}} \right)  \left( t_{{2}}+1 \right) }{\sqrt {t_{{1}}}
\sqrt {t_{{2}}} \left( -t_{{2}}+t_{{1}} \right) }}&-{\frac { \left( t_
{{1}}t_{{2}}-1 \right)  \left( {t_{{2}}}^{2}-1 \right) }{\sqrt {t_{{1}
}}\sqrt {t_{{2}}} \left( -t_{{2}}+t_{{1}} \right) }}
\\ \noalign{\medskip}{\frac { \left( t_{{1}}t_{{2}}-1 \right)  \left( 
{t_{{1}}}^{2}-1 \right) }{\sqrt {t_{{1}}}\sqrt {t_{{2}}} \left( -t_{{2
}}+t_{{1}} \right) }}&-{\frac { \left( t_{{1}}t_{{2}}-1 \right) 
 \left( 1+t_{{1}} \right)  \left( t_{{2}}-1 \right) }{\sqrt {t_{{1}}}
\sqrt {t_{{2}}} \left( -t_{{2}}+t_{{1}} \right) }}\end {array}
 \right],
\]
however, it does not suffice for the full gluing operator.

We can compute the solution to the quantum difference equation (called {\it capping}) before the diffence  equation itself.
Capping can be computed directly as a series in $z$ by using the approach in \cite{OkBethe}, which relates the relative boundary conditions with insertions of the tautological classes of K-theoretic off-shell stable envelopes:
\[
\Psi(z) =  \left[ \begin {array}{cc} 1-{\frac { \left( t_{{2}}+1 \right) 
 \left( -1+t_{{1}} \right) q \left( t_{{1}}t_{{2}}-1 \right) }{
 \left( -t_{{2}}+t_{{1}} \right)  \left( q-1 \right) }{\frac {1}{
\sqrt {t_{{1}}}}}{\frac {1}{\sqrt {t_{{2}}}}}}z + \ldots &-{\frac {q \left( t_
{{1}}t_{{2}}-1 \right)  \left( {t_{{1}}}^{2}-1 \right) }{ \left( qt_{{
2}}-t_{{1}} \right)  \left( -t_{{2}}+t_{{1}} \right) }\sqrt {t_{{2}}}{
\frac {1}{\sqrt {t_{{1}}}}}}z + \ldots\\ \noalign{\medskip}{\frac {q \left( t
_{{1}}t_{{2}}-1 \right)  \left( {t_{{2}}}^{2}-1 \right) }{ \left( -t_{
{2}}+t_{{1}} \right)  \left( t_{{1}}q-t_{{2}} \right) }\sqrt {t_{{1}}}
{\frac {1}{\sqrt {t_{{2}}}}}}z + \ldots&1+{\frac { \left( t_{{2}}-1 \right) q
 \left( t_{{1}}t_{{2}}-1 \right)  \left( 1+t_{{1}} \right) }{ \left( -
t_{{2}}+t_{{1}} \right)  \left( q-1 \right) }{\frac {1}{\sqrt {t_{{1}}
}}}{\frac {1}{\sqrt {t_{{2}}}}}}z + \ldots \end {array} \right]
\]
It satisfies the equation
\[
\Psi(qz) \cO(1) = M(z) \Psi(z),
\]
from which we can compute $M(z)$.  Unfortunately, the expression fot $M(z)$ is too long to be published here, but we will publish the wall operators in the stable basis.
The wall operators can be computed as
\[
\B_0 (zq^{-1}) = \lim_{q\to \infty} M(zq^{-1}) \cO(-1),
\]
\[
\B_{1/2} (zq^{-1/2}) = \lim_{q\to \infty} M(zq^{-1/2}) \cO(-1),
\]
and then
\[
M(z) = \B_0 \B_{1/2} \cO(1).
\]
In the stable basis for $\B_\sw$ from $\Stab^{[\sw-\eps]}$ to $\Stab^{[\sw + \eps]}$, they depend only on $z, q, \hbar$:
\[
\B_0 =  \left[ \begin {array}{cc} {\frac { \left( \sqrt {t_{{1}}}\sqrt {t_{{2
}}}qz-1 \right) {t_{{1}}}^{3/2}{t_{{2}}}^{3/2} \left( {q}^{2}{z}^{2}-1
 \right) }{ \left( qz+\sqrt {t_{{1}}}\sqrt {t_{{2}}} \right)  \left( 
\sqrt {t_{{1}}}\sqrt {t_{{2}}}-qz \right) ^{2}}}&{\frac {qzt_{{2}}
 \left( \sqrt {t_{{1}}}\sqrt {t_{{2}}}qz-1 \right) t_{{1}} \left( t_{{
1}}t_{{2}}-1 \right) }{ \left( qz+\sqrt {t_{{1}}}\sqrt {t_{{2}}}
 \right)  \left( \sqrt {t_{{1}}}\sqrt {t_{{2}}}-qz \right) ^{2}}}
\\ \noalign{\medskip}{\frac {qzt_{{2}} \left( \sqrt {t_{{1}}}\sqrt {t_
{{2}}}qz-1 \right) t_{{1}} \left( t_{{1}}t_{{2}}-1 \right) }{ \left( q
z+\sqrt {t_{{1}}}\sqrt {t_{{2}}} \right)  \left( \sqrt {t_{{1}}}\sqrt 
{t_{{2}}}-qz \right) ^{2}}}&{\frac { \left( \sqrt {t_{{1}}}\sqrt {t_{{
2}}}qz-1 \right) {t_{{1}}}^{3/2}{t_{{2}}}^{3/2} \left( {q}^{2}{z}^{2}-
1 \right) }{ \left( qz+\sqrt {t_{{1}}}\sqrt {t_{{2}}} \right)  \left( 
\sqrt {t_{{1}}}\sqrt {t_{{2}}}-qz \right) ^{2}}}\end {array} \right] 
\]
\[
\B_{1/2} =  \left[ \begin {array}{cc} {\frac {t_{{2}}t_{{1}} \left( {z}^{2}q-1
 \right) }{{z}^{2}q-t_{{1}}t_{{2}}}}&-{\frac {t_{{1}} \left( t_{{1}}t_
{{2}}-1 \right) }{{z}^{2}q-t_{{1}}t_{{2}}}}\\ \noalign{\medskip}-{
\frac {{z}^{2}qt_{{2}} \left( t_{{1}}t_{{2}}-1 \right) }{{z}^{2}q-t_{{
1}}t_{{2}}}}&{\frac {t_{{2}}t_{{1}} \left( {z}^{2}q-1 \right) }{{z}^{2
}q-t_{{1}}t_{{2}}}}\end {array} \right] 
\]
are the R-matrices for $\sX^! \cong \Hilb(\C^2, 2)$ and $(\sX^!)^{\Z/2\Z} \cong \Hilb(T^*\P^1, 1)$ respectively.

Now let us obtain the wall operators from the monodromy.
The matrices of elliptic stable envelopes for $\sX$ and $\sX_{\flop}$ are
\[
\Stab =  \left[ \begin {array}{cc} \vartheta \left( t_{{2}} \right) \vartheta
 \left( t_{{2}}^{2} \right) &{\frac { \vartheta \left( t_{{2}}
 \right)^{2}\vartheta \left( t_{{1}}t_{{2}} \right) \vartheta ( { 
{t_{{2}}z}/{t_{{1}}}} )  }{\vartheta
 \left( t_{{1}} \right) \vartheta \left( z \right) }}+{\frac {\vartheta \left( t_{{2}} \right) 
\vartheta \left( t_{{1}}t_{{2}} \right) \vartheta ( t_{{2}}{z}^{2}) \vartheta \left( t_{{1}}t_{{2}}z \right) \vartheta \left( {
 {t_{{2}}}/{t_{{1}}}} \right) }{\vartheta \left( t_{{1}
}^{-1} \right) \vartheta \left( {z}^{2}t_{{1}}t_{{2}} \right) \vartheta
 \left( z \right) }  }
\\ \noalign{\medskip}0&\vartheta \left( t_{{2}} \right) \vartheta \left( {
 {t_{{2}}}/{t_{{1}}}} \right)  \end {array} \right],
\]
\[
\Stab_{\flop} =  \left[ \begin {array}{cc} {\frac {\vartheta \left( t_{{2}} \right) 
\vartheta \left( {z}^{2}t_{{1}}^{2}t_{{2}}^{2} \right) \vartheta \left( 
t_{{1}}t_{{2}}z \right)  \vartheta \left( { {t_{{2}}}/{t_{{1}}}}
 \right) }{\vartheta \left( {z}^{2}t_{{1}}t_{{2}} \right) 
\vartheta \left( z \right) } }&0\\ \noalign{\medskip}{\frac {\vartheta \left( t_{{2}}
 \right) \vartheta \left( t_{{1}}t_{{2}} \right) \vartheta \left( t_{{1}}^
{2}t_{{2}}{z}^{2} \right) \vartheta \left( { {t_{{2}}}/{t_{{1}}}
} \right) }{\vartheta \left( t_{{1}}^{-1} \right) 
\vartheta \left( {z}^{2} \right) } }+{\frac {  \vartheta \left( t_{{2}} \right)^{2
}\vartheta \left( t_{{1}}t_{{2}} \right) \vartheta \left( {z}^{2}t_{{1}}t_{{
2}} \right) \vartheta \left( z t_{{1}}^{2} \right) }{\vartheta \left( t_{{1
}} \right) \vartheta \left( {z}^{2} \right) \vartheta \left( z \right) }}&{
\frac {\vartheta \left( t_{{2}} \right) \vartheta \left( t_{{2}}^{2}
 \right) \vartheta \left( {z}^{2}t_{{1}}t_{{2}} \right) \vartheta \left( t_{
{1}}t_{{2}}z \right) }{\vartheta \left( {z}^{2} \right) \vartheta \left( z
 \right) }}\end {array} \right] 
\]

The walls are located at 
\[
\Z \bigcup \left(\Z+\frac{1}{2}\right) \subset \R \cong \Pic(\Hilb(\C^2,2)) \otimes \R.
\]
It is enough to compute the wall operators $\B_0, \B_{1/2}$, since
\[
\B_{\sw + n} = \cO(n) \B_{\sw} \cO(-n).
\]

The limits of the monodromy to the walls are
\[
\Mon_0 =  \left[ \begin {array}{cc} {\frac { \left( t_{{1}}t_{{2}}z-1 \right) 
 \left( {z}^{2}t_{{1}}t_{{2}}-1 \right)  \left( {t_{{1}}}^{2}-1
 \right) }{\sqrt {t_{{2}}}{t_{{1}}}^{3/2} \left( t_{{1}}-t_{{2}}
 \right)  \left( -1+z \right) ^{2} \left( z+1 \right) }}&{\frac {
 \left( t_{{1}}t_{{2}}-1 \right)  \left( t_{{2}}z+1 \right)  \left( zt
_{{1}}-1 \right)  \left( t_{{1}}t_{{2}}z-1 \right) }{\sqrt {t_{{1}}}{t
_{{2}}}^{3/2} \left( t_{{1}}-t_{{2}} \right)  \left( -1+z \right) ^{2}
 \left( z+1 \right) }}\\ \noalign{\medskip}-{\frac { \left( t_{{1}}t_{
{2}}-1 \right)  \left( t_{{2}}z-1 \right)  \left( zt_{{1}}+1 \right) 
 \left( t_{{1}}t_{{2}}z-1 \right) }{\sqrt {t_{{2}}}{t_{{1}}}^{3/2}
 \left( t_{{1}}-t_{{2}} \right)  \left( -1+z \right) ^{2} \left( z+1
 \right) }}&-{\frac { \left( {t_{{2}}}^{2}-1 \right)  \left( {z}^{2}t_
{{1}}t_{{2}}-1 \right)  \left( t_{{1}}t_{{2}}z-1 \right) }{\sqrt {t_{{
1}}}{t_{{2}}}^{3/2} \left( t_{{1}}-t_{{2}} \right)  \left( -1+z
 \right) ^{2} \left( z+1 \right) }}\end {array} \right],
\]
\[
\Mon_{0 + \eps} =  \left[ \begin {array}{cc} {\frac {{t_{{1}}}^{2}-1}{\sqrt {t_{{2}}}{t_
{{1}}}^{3/2} \left( t_{{1}}-t_{{2}} \right) }}&{\frac {t_{{1}}t_{{2}}-
1}{\sqrt {t_{{1}}}{t_{{2}}}^{3/2} \left( t_{{1}}-t_{{2}} \right) }}
\\ \noalign{\medskip}-{\frac {t_{{1}}t_{{2}}-1}{\sqrt {t_{{2}}}{t_{{1}
}}^{3/2} \left( t_{{1}}-t_{{2}} \right) }}&-{\frac {{t_{{2}}}^{2}-1}{
\sqrt {t_{{1}}}{t_{{2}}}^{3/2} \left( t_{{1}}-t_{{2}} \right) }}
\end {array} \right],
\]
\[
\Mon_{1/2} =  \left[ \begin {array}{cc} {\frac { \left( {z}^{2}t_{{1}}t_{{2}}-1
 \right)  \left( {t_{{1}}}^{2}-1 \right) }{{t_{{1}}}^{5/2}{t_{{2}}}^{3
/2} \left( t_{{1}}-t_{{2}} \right)  \left( {z}^{2}-1 \right) }}&{
\frac { \left( {z}^{2}{t_{{2}}}^{2}-1 \right)  \left( t_{{1}}t_{{2}}-1
 \right) }{\sqrt {t_{{1}}} \left( {z}^{2}-1 \right)  \left( t_{{1}}-t_
{{2}} \right) {t_{{2}}}^{7/2}}}\\ \noalign{\medskip}-{\frac { \left( t
_{{1}}t_{{2}}-1 \right)  \left( {z}^{2}{t_{{1}}}^{2}-1 \right) }{
\sqrt {t_{{2}}} \left( t_{{1}}-t_{{2}} \right) {t_{{1}}}^{7/2} \left( 
{z}^{2}-1 \right) }}&-{\frac { \left( {z}^{2}t_{{1}}t_{{2}}-1 \right) 
 \left( {t_{{2}}}^{2}-1 \right) }{{t_{{1}}}^{3/2}{t_{{2}}}^{5/2}
 \left( t_{{1}}-t_{{2}} \right)  \left( {z}^{2}-1 \right) }}
\end {array} \right],
\]
\[
\Mon_{1/2+\eps} =  \left[ \begin {array}{cc} {\frac {{t_{{1}}}^{2}-1}{{t_{{1}}}^{5/2}{t_
{{2}}}^{3/2} \left( t_{{1}}-t_{{2}} \right) }}&{\frac {t_{{1}}t_{{2}}-
1}{\sqrt {t_{{1}}} \left( t_{{1}}-t_{{2}} \right) {t_{{2}}}^{7/2}}}
\\ \noalign{\medskip}-{\frac {t_{{1}}t_{{2}}-1}{\sqrt {t_{{2}}}
 \left( t_{{1}}-t_{{2}} \right) {t_{{1}}}^{7/2}}}&-{\frac {{t_{{2}}}^{
2}-1}{{t_{{1}}}^{3/2}{t_{{2}}}^{5/2} \left( t_{{1}}-t_{{2}} \right) }}
\end {array} \right].
\]

We then have
\[
\B_\sw(z q^{-\sw} \sqrt{t_1 t_2}) = \Mon_\sw \cdot \Mon_{\sw + \eps}^{-1}.
\]

The operator of the quantum difference equation can now be computed as
\[
M(z) = \B_0 \B_{1/2} \cO(1).
\]



\titleformat{\chapter}[display]
{\normalfont\bfseries\filcenter}{}{0pt}{\large\bfseries\filcenter{#1}}  
\titlespacing*{\chapter}
  {0pt}{0pt}{30pt}

\begin{tiny}
	\setlength\bibitemsep{\baselineskip}  
	\begingroup
	\setstretch{1.0}
	\printbibliography[title={References}]

@misc{KononovSmirnov2,
      title={Pursuing quantum difference equations II: 3D-mirror symmetry}, 
      author={Yakov Kononov and Andrey Smirnov},
      year={2020},
      eprint={2008.06309},
      archivePrefix={arXiv},
      primaryClass={math.AG}
}

@misc{OkInductive1,
      title={Inductive construction of stable envelopes and applications, I. Actions of tori. Elliptic cohomology and K-theory}, 
      author={Andrei Okounkov},
      year={2020},
      eprint={2007.09094},
      archivePrefix={arXiv},
      primaryClass={math.AG}
}

@misc{OkInductive2,
      title={Inductive construction of stable envelopes and applications, II. Nonabelian actions. Integral solutions and monodromy of quantum difference equations}, 
      author={Andrei Okounkov},
      year={2020},
      eprint={2010.13217},
      archivePrefix={arXiv},
      primaryClass={math.AG}
}

@article
{MO,
author = {Maulik, Davesh and Okounkov, Andrei},
year = {2012},
month = {11},
pages = {},
title = {{Quantum Groups and Quantum Cohomology}},
volume = {408},
journal = {Astérisque},
doi = {10.24033/ast.1074}
}

@article {InstR,
    AUTHOR = {Smirnov, Andrey},
     title = {{On the instanton $R$-matrix}},
   JOURNAL = {Comm. Math. Phys.},
  FJOURNAL = {Communications in Mathematical Physics},
    VOLUME = {345},
      YEAR = {2016},
    NUMBER = {3},
     PAGES = {703--740},
      ISSN = {0010-3616},
   MRCLASS = {57R56 (14C05 17B65 33C67)},
  MRNUMBER = {3519581},
       DOI = {10.1007/s00220-016-2686-8},
       URL = {http://dx.doi.org/10.1007/s00220-016-2686-8},
}

@ARTICLE{KOO,
       author = {{Kononov}, Ya. and {Okounkov}, A. and {Osinenko}, A.},
        title = {{The 2-leg vertex in K-theoretic DT theory}},
      journal = {arXiv e-prints},
     keywords = {Mathematical Physics, High Energy Physics - Theory, Mathematics - Algebraic Geometry, Mathematics - Representation Theory},
         year = 2019,
        month = may,
          eid = {arXiv:1905.01523},
        pages = {arXiv:1905.01523},
archivePrefix = {arXiv},
       eprint = {1905.01523},
 primaryClass = {math-ph},
       adsurl = {https://ui.adsabs.harvard.edu/abs/2019arXiv190501523K},
      adsnote = {Provided by the SAO/NASA Astrophysics Data System}
}

@inbook{pcmilect,
author = {Okounkov, Andrei},
year = {2017},
month = {12},
pages = {251-380},
title = {{Lectures on K-theoretic computations in enumerative geometry}},
isbn = {9781470435745},
doi = {10.1090/pcms/024/05}
}

@article{FR,
      author         = "Frenkel, I. B. and Reshetikhin, N. {\relax Yu}.",
      title          = {Quantum affine algebras and holonomic difference
                        equations},
      journal        = "Commun. Math. Phys.",
      volume         = "146",
      year           = "1992",
      pages          = "1-60",
      doi            = "10.1007/BF02099206",
      SLACcitation   = "%%CITATION = CMPHA,146,1;%%"
}

@article {EV,
    AUTHOR = {Etingof, P. and Varchenko, A.},
     title = {{Dynamical Weyl groups and applications}},
   JOURNAL = {Adv. Math.},
  FJOURNAL = {Advances in Mathematics},
    VOLUME = {167},
      YEAR = {2002},
    NUMBER = {1},
     PAGES = {74--127},
      ISSN = {0001-8708},
     CODEN = {ADMTA4},
   MRCLASS = {17B10 (17B20 17B37 39A12 81R50)},
  MRNUMBER = {1901247 (2003d:17004)},
MRREVIEWER = {Anjan Kundu},
       DOI = {10.1006/aima.2001.2034}
}

@preamble{
   "\def\cprime{$'$} "
}

@article {FRT,
    AUTHOR = {Reshetikhin, N. Yu. and Takhtadzhyan, L. A. and Faddeev, L.
              D.},
     title = {{Quantization of Lie groups and Lie algebras}},
   JOURNAL = {Algebra i Analiz},
  FJOURNAL = {Algebra i Analiz},
    VOLUME = {1},
      YEAR = {1989},
    NUMBER = {1},
     PAGES = {178--206},
      ISSN = {0234-0852},
   MRCLASS = {17B65 (17B35 22E46 58F07 81D07 82A69)},
  MRNUMBER = {1015339 (90j:17039)},
MRREVIEWER = {Ya. S. So{\u\i}bel{\cprime}man},
}

@article {Nak1,
    AUTHOR = {Nakajima, Hiraku},
     title = {{Quiver varieties and Kac-Moody algebras}},
   JOURNAL = {Duke Math. J.},
  FJOURNAL = {Duke Mathematical Journal},
    VOLUME = {91},
      YEAR = {1998},
    NUMBER = {3},
     PAGES = {515--560},
      ISSN = {0012-7094},
     CODEN = {DUMJAO},
   MRCLASS = {17B67 (14D25 16G20 17B35 53C25 58F05)},
  MRNUMBER = {1604167 (99b:17033)},
MRREVIEWER = {Michael M. Kapranov},
       DOI = {10.1215/S0012-7094-98-09120-7},
       URL = {http://dx.doi.org/10.1215/S0012-7094-98-09120-7},
}

@article {SchifVas,
    AUTHOR = {Schiffmann, Olivier and Vasserot, Eric},
     TITLE = {{The elliptic Hall algebra and the $K$-theory of the
              Hilbert scheme of $\Bbb A^2$}},
   JOURNAL = {Duke Math. J.},
  FJOURNAL = {Duke Mathematical Journal},
    VOLUME = {162},
      YEAR = {2013},
    NUMBER = {2},
     PAGES = {279--366},
      ISSN = {0012-7094},
     CODEN = {DUMJAO},
   MRCLASS = {19E08 (14C05 14C35 14F05 20C08)},
  MRNUMBER = {3018956},
MRREVIEWER = {Jens Hornbostel},
       DOI = {10.1215/00127094-1961849},
       URL = {http://dx.doi.org/10.1215/00127094-1961849},
}

@article{Pandharipande,
   title={13/2 ways of counting curves},
   ISBN={9781107279544},
   url={http://dx.doi.org/10.1017/CBO9781107279544.007},
   DOI={10.1017/cbo9781107279544.007},
   journal={Moduli Spaces},
   publisher={Cambridge University Press},
   author={Pandharipande, R. and Thomas, R. P.},
   editor={Brambila-Paz, Leticia and Newstead, Peter and Thomas, Richard P. W. and Garcia-Prada, OscarEditors},
   pages={282–333}
}

@article{OS,
      author         = "Okounkov, Andrei and Smirnov, Andrey",
      title = {{Quantum difference equation for Nakajima varieties}},
      year           = "2016",
      eprint         = "1602.09007",
      archivePrefix  = "arXiv",
      primaryClass   = "math-ph",
      JOURNAL = {ArXiv: 1602.09007}
}

@article{AOF,
author = {Aganagic, Mina and Frenkel, Edward and Okounkov, Andrei},
year = {2017},
month = {01},
pages = {},
title = {{Quantum q-Langlands Correspondence}},
volume = {79},
journal = {Transactions of the Moscow Mathematical Society},
doi = {10.1090/mosc/278}
}

@article {GL,
    AUTHOR = {Givental, Alexander and Lee, Yuan-Pin},
     TITLE = {{Quantum $K$-theory on flag manifolds, finite-difference
              Toda lattices and quantum groups}},
   JOURNAL = {Invent. Math.},
  FJOURNAL = {Inventiones Mathematicae},
    VOLUME = {151},
      YEAR = {2003},
    NUMBER = {1},
     PAGES = {193--219},
      ISSN = {0020-9910},
     CODEN = {INVMBH},
   MRCLASS = {14N35 (14C35 14M15 17B37 37K20 39A70 53D45)},
  MRNUMBER = {1943747 (2004g:14063)},
MRREVIEWER = {Domenico Fiorenza},
       DOI = {10.1007/s00222-002-0250-y},
       URL = {http://dx.doi.org/10.1007/s00222-002-0250-y},
}

@book {Shenfeld,
	AUTHOR = {Shenfeld, Daniel},
	title = {{Abelianization of stable envelopes in symplectic resolutions}},
	NOTE = {Thesis (Ph.D.)--Princeton University},
	PUBLISHER = {ProQuest LLC, Ann Arbor, MI},
	YEAR = {2013},
	PAGES = {75},
	ISBN = {978-1303-45685-5},
	MRCLASS = {Thesis},
	MRNUMBER = {3192995},
	URL =
	{http://gateway.proquest.com/openurl?url_ver=Z39.88-2004&rft_val_fmt=info:ofi/fmt:kev:mtx:dissertation&res_dat=xri:pqm&rft_dat=xri:pqdiss:3597558},
}

@article{OkBethe,
author = {Aganagic, Mina and Okounkov, Andrei},
year = {2016},
month = {01},
pages = {565-600},
title = {{Cuasimap Counts and Bethe Eigenfunctions}},
volume = {16},
journal = {Moscow Mathematical Journal},
doi = {10.17323/1609-4514-2016-16-4-565-600}
}

@article {NegGor,
    AUTHOR = {Gorsky, Eugene and Negu\c t, Andrei},
     title = {{Infinitesimal change of stable basis}},
   JOURNAL = {Selecta Math. (N.S.)},
  FJOURNAL = {Selecta Mathematica. New Series},
    VOLUME = {23},
      YEAR = {2017},
    NUMBER = {3},
     PAGES = {1909--1930},
      ISSN = {1022-1824},
   MRCLASS = {17B37 (14C05 14C35 19G99)},
  MRNUMBER = {3663597},
       DOI = {10.1007/s00029-017-0327-5},
       URL = {http://dx.doi.org/10.1007/s00029-017-0327-5},
}

@article{AOElliptic,
      author         = "Aganagic, Mina and Okounkov, Andrei",
      title = {{Elliptic stable envelopes}},
      year           = "2016",
      eprint         = "1604.00423",
      archivePrefix  = "arXiv",
      primaryClass   = "math.AG",
      SLACcitation   = "%%CITATION = ARXIV:1604.00423;%%"
}

@incollection {GinzburgLectures,
    AUTHOR = {Ginzburg, Victor},
     title = {{Lectures on Nakajima's quiver varieties}},
 BOOKtitle = {{Geometric methods in representation theory. I}},
    SERIES = {S\'emin. Congr.},
    VOLUME = {24},
     PAGES = {145--219},
 PUBLISHER = {Soc. Math. France, Paris},
      YEAR = {2012},
   MRCLASS = {14L24 (16G20 17B67)},
  MRNUMBER = {3202703},
MRREVIEWER = {Xueqing Chen},
}

@incollection {NakajimaLectures2,
    AUTHOR = {Nakajima, Hiraku},
     title = {{More lectures on Hilbert schemes of points on surfaces}},
 BOOKtitle = {{Development of moduli theory---Kyoto 2013}},
    SERIES = {Adv. Stud. Pure Math.},
    VOLUME = {69},
     PAGES = {173--205},
 PUBLISHER = {Math. Soc. Japan, [Tokyo]},
      YEAR = {2016},
   MRCLASS = {14C05 (14D21 14J60)},
  MRNUMBER = {3586508},
}

@book {NakajimaLectures1,
    AUTHOR = {Nakajima, Hiraku},
     title = {{Lectures on Hilbert schemes of points on surfaces}},
    SERIES = {University Lecture Series},
    VOLUME = {18},
 PUBLISHER = {American Mathematical Society, Providence, RI},
      YEAR = {1999},
     PAGES = {xii+132},
      ISBN = {0-8218-1956-9},
   MRCLASS = {14C05 (17B69)},
  MRNUMBER = {1711344},
MRREVIEWER = {Mark Andrea A. de Cataldo},
       DOI = {10.1090/ulect/018},
       URL = {http://dx.doi.org/10.1090/ulect/018},
}

@article{SmirnovElliptic,
author = {Smirnov, Andrey},
year = {2018},
month = {04},
pages = {},
title = {{Elliptic stable envelope for Hilbert scheme of points in the plane}},
volume = {26},
journal = {Selecta Mathematica},
doi = {10.1007/s00029-019-0527-2}
}

@book {mumf,
	AUTHOR = {Mumford, David},
	title = {{Tata lectures on theta. I}},
	SERIES = {Modern Birkh\"auser Classics},
	NOTE = {With the collaboration of C. Musili, M. Nori, E. Previato and
	M. Stillman,
	Reprint of the 1983 edition},
	PUBLISHER = {Birkh\"auser Boston, Inc.},
	YEAR = {2007},
	PAGES = {xiv+235},
	ISBN = {978-0-8176-4572-4; 0-8176-4572-1},
	MRCLASS = {14K25 (11E45 11G10 14C30)},
	MRNUMBER = {2352717},
	URL = {https://doi.org/10.1007/978-0-8176-4578-6},
}

@ARTICLE{KononovSmirnov1,
	author = {{Kononov}, Yakov and {Smirnov}, Andrey},
	title = "{Pursuing quantum difference equations I: stable envelopes of subvarieties}",
	journal = {arXiv e-prints},
	keywords = {Mathematics - Representation Theory, Mathematical Physics, Mathematics - Algebraic Geometry, Mathematics - K-Theory and Homology},
	year = 2020,
	month = apr,
	eid = {arXiv:2004.07862},
	pages = {arXiv:2004.07862},
	archivePrefix = {arXiv},
	eprint = {2004.07862},
	primaryClass = {math.RT},
	adsurl = {https://ui.adsabs.harvard.edu/abs/2020arXiv200407862K},
	adsnote = {Provided by the SAO/NASA Astrophysics Data System}
}

@misc{smirnov2021quantum,
      title={Quantum differential and difference equations for Hilb},
      author={Andrey Smirnov},
      year={2021},
      eprint={2102.10726},
      archivePrefix={arXiv},
      primaryClass={math.AG}
}

@article {NakajimaBows,
	AUTHOR = {Nakajima, Hiraku and Takayama, Yuuya},
	TITLE = {Cherkis bow varieties and {C}oulomb branches of quiver gauge
	theories of affine type {$A$}},
	JOURNAL = {Selecta Math. (N.S.)},
	FJOURNAL = {Selecta Mathematica. New Series},
	VOLUME = {23},
	YEAR = {2017},
	NUMBER = {4},
	PAGES = {2553--2633},
	ISSN = {1022-1824},
	MRCLASS = {16G20 (14D21 81T13)},
	MRNUMBER = {3703461},
	MRREVIEWER = {Tristan Bozec},
	DOI = {10.1007/s00029-017-0341-7},
	URL = {https://doi.org/10.1007/s00029-017-0341-7},
}

@article {OkTalk,
	AUTHOR = {{Aganagic}, Mina and  {Okounkov}, Andrei},
	TITLE = {Duality interfaces in 3-dimensional theories, talks at \textit{String Math 2019}, 
	available at \textsf{https://www.stringmath2019.se/scientific-talks-2/. pages 7, 17}}
}

@article {OkRodeIsland,
	AUTHOR = {{Okounkov}, Andrei},
	TITLE = {Informal talks at the AMS Mathematics Research Community meeting on Geometric Representation Theory and Equivariant elliptic cohomology, \textit{Rode Island in June 2019}}
}

@article{AOprep,
author = {Aganagic, Mina and Okounkov, Andrei},

title = {{In preparation.}},

}
	\endgroup
\end{tiny}

\addcontentsline{toc}{chapter}{References}  


\titleformat{\chapter}[display]
{\normalfont\bfseries\filcenter}{}{0pt}{\large\chaptertitlename\ \large\thechapter : \large\bfseries\filcenter{#1}}  
\titlespacing*{\chapter}
  {0pt}{0pt}{30pt}	
  
\titleformat{\section}{\normalfont\bfseries}{\thesection}{1em}{#1}

\titleformat{\subsection}{\normalfont}{\thesubsection}{0em}{\hspace{1em}#1}

\begin{appendices}

\addtocontents{toc}{\protect\renewcommand{\protect\cftchappresnum}{\appendixname\space}}
\addtocontents{toc}{\protect\renewcommand{\protect\cftchapnumwidth}{6em}}


\chapter{Wiener-Hopf factorization of matrices with rational entries}

If a matrix has several singular points, we would like to factorize it to a product of matrices each having a singularity at a single point. Let us start with the simplest case of two singularities at $0$ and $\infty$.
\begin{lemma}
Suppose
\[
M(z) \in \End(\C^n) \otimes \C[z^\pm 1]
\]
is a matrix, such that $\det M(z) \neq 0$ for all $z \in \C^\times$. 
\end{lemma}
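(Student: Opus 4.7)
The plan is to prove a Birkhoff-style factorization
\[
M(z) = M_-(z) \cdot z^K \cdot M_+(z),
\]
where $M_+(z)$ is a matrix of rational functions regular and invertible at $z=0$, $M_-(z)$ is a matrix of rational functions regular and invertible at $z=\infty$, and $z^K = \diag(z^{k_1}, \ldots, z^{k_n})$ for some integers $k_1 \geq \cdots \geq k_n$ (the partial indices). Under our hypotheses $M(z)$ is a Laurent polynomial, so one can even take $M_+$ to be a polynomial in $z$ and $M_-$ a polynomial in $z^{-1}$.

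First I would give the conceptual proof via algebraic geometry. Cover $\P^1$ by two charts $U_0 = \P^1 \setminus \{\infty\}$ and $U_\infty = \P^1 \setminus \{0\}$, and use $M(z)$ as a transition function to glue the trivial rank-$n$ bundles on the two charts over $U_0 \cap U_\infty = \C^\times$. The assumption $\det M(z) \neq 0$ on $\C^\times$ guarantees that the cocycle is invertible, so this defines a rank-$n$ algebraic vector bundle $E$ on $\P^1$. By Grothendieck's splitting theorem, $E \cong \bigoplus_{i=1}^n \cO_{\P^1}(k_i)$ for a unique multiset of integers $k_i$. Comparing the standard frames of the trivial bundles on $U_0$ and $U_\infty$ with the frames induced from the Grothendieck splitting produces matrices $M_+$ (in the chart $U_0$, hence rational with no pole at $0$) and $M_-$ (in the chart $U_\infty$, hence rational with no pole at $\infty$), while the transition inside the Grothendieck splitting is given precisely by $\diag(z^{k_i})$.

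As an alternative, more elementary route I would proceed by induction on $n$. The case $n=1$ is trivial: factor out the unique integer power of $z$ from the Laurent polynomial $M(z)$. For the inductive step, one chooses a suitable element $v \in \C^n$ so that the column $M(z) v$ has smallest possible $z$-order; a Gauss-type elimination on rows then reduces to an $(n-1)\times(n-1)$ problem, after possibly extracting diagonal powers of $z$. The combinatorics of this reduction is standard and produces the partial indices $k_i$ directly.

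The main obstacle in the geometric proof is to ensure that the factors are algebraic (rational) and not merely holomorphic; this is handled by GAGA on $\P^1$, which identifies algebraic and analytic vector bundles and morphisms. In the elementary proof the main subtlety is keeping track that at each elimination step the transformation matrices remain invertible over $\C[z^{\pm 1}]$ on the appropriate side, which is the reason for the careful choice of column and for the inevitable appearance of the diagonal factor $z^K$. Either way, once existence is established, uniqueness of the integers $k_i$ (as an unordered tuple) follows from their intrinsic description as the splitting type of $E$, and this is what will allow us in the applications to read off the localization of poles of the shift operators separately at $0$ and $\infty$.
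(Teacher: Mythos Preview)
Your primary argument is essentially identical to the paper's: glue a rank-$n$ bundle on $\P^1$ from the cocycle $M(z)$ on the two standard affine charts and invoke Grothendieck's splitting theorem, so that the change-of-frame matrices on each chart give the desired factors and the diagonal $z^K$ is the transition function of $\bigoplus \cO(k_i)$. The only cosmetic differences are the order of the factors (you write $M_-\cdot z^K\cdot M_+$ where the paper writes $A\cdot D\cdot B$) and that you add an alternative inductive argument and a remark on GAGA that the paper does not include.
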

Then there exist matrices 
\[
A(z) \in \End(\C^n) \otimes \C[z], \ \ B(z) \in \End(c^n) \otimes \C[z^{-1}]
\]
and a diagonal matrix
\[
D(z) = \begin{pmatrix}
z^{d_1} & & \\
 & \ddots & \\
 & & z^{d_n}
\end{pmatrix}
\]
with the following properties:
\begin{enumerate}
    \item $A(z)$ is nondegenerate:
    \[
    \det(A(z)) \neq 0,
    \]
    including the point $z=0$,
    \item $B(z)$ is nondegenerate:
    \[
    \det B(z) \neq 0,
    \]
    including the point $z = \infty$,
    \item
    They provide a factorization
    \[
    M(z) = A(z) \cdot D(z) \cdot B(z).
    \]
\end{enumerate}
\begin{proof}
The proof is beautiful, nonconstructive, and uses simple algebraic geometry.
Consider the projective line $\P^1$ over $\C$ and its \v{C}ech covering by affine charts
\[
U_1 = \P^1 \setminus \{\infty\}, \ \ U_2 = \P^1 \setminus \{0\}, \ \ U_1 \cap U_2 = \C^\times.
\]
Vector bundles over $\P^1$ are can be defined by a gluing function, in particular, the function
\[
M(z): U_1 \cap U_2 \to GL(n, \cO)
\]
defines a vector bundle $\V$ over $\P^1$.

By the theorem of Grothendieck, any vector bundle over $\P^1$ is isomorphic to a direct sum of line bundles, thus
\[
\V \cong \bigoplus_{i=1}^n \cO(d_i).
\]
The matrices $A(z)$ and $B(z)$ are the matrices of change of trivialization in $U_1$ and $U_2$, and the diagonal matrix of monomials in $z$ is the gluing function for the bundle  $\bigoplus_{i=1}^n \cO(d_i)$.
\end{proof}

Note that the diagonal matrix is uniquely determined up to conjugation, and its determinant is equal related to the characteristic class
\[
\det D = z^{c_1(\det \V)}.
\]
The matrices $A(z)$ and $B(z)$ are determined up to conjugation by a constant matrix .
Indeed, suppose we have two factorizations
\[
A(z) D(z) B(z) = A_1(z) D_1(z) B_1(z),
\]
then
\[
D_1(z)^{-1} A_1(z)^{-1} A(z) D(z) = B_1(z) B(z)^{-1}.
\]
The right hand side is a matrix with coefficients in $\C[z^{-1}]$ which is nondegenerate for all $z \neq 0$. But the left hand side is nondegenerate at $z=0$, that is why the matrix $B_1(z) B_1^{-1}(z)$ must be constant.

For a more general rational matrix $M(z)$ let us say that it is degenerate at $z=z_0$ if the value $M(z_0)$ is not finite or
\[
\det M(z_0) = 0.
\]

Now we will use complex-analytic analogue of the same idea.
\begin{lemma}
Suppose the points where M(z) is degenerate are $z_1, ... , z_k$ and $0, \infty$,
and 
\[
|z_1| < ... < |z_k|.
\]
Then there is a factorization
\[
M(z) = A(z) \cdot D(z) \cdot B(z)
\]
such that
\begin{enumerate}
    \item $A(z)$ has singularities only in $\{z_k, \infty\}$,
    \item $D(z)$ is a diagonal matrix,
    \[
D(z) = \begin{pmatrix}
z^{d_1} & & \\
 & \ddots & \\
 & & z^{d_n}
\end{pmatrix}
\]
\item $B(z)$ has singularities only in $\{0, z_1, ..., z_{k-1}\}$
\end{enumerate}
\end{lemma}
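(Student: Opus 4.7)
The plan is to reuse the sheaf-theoretic idea of the previous lemma, replacing the standard two-chart cover of $\mathbb{P}^1$ by a cover adapted to the prescribed singular sets. Concretely, I would choose the two Zariski-open subsets
\[
V_1 = \mathbb{P}^1 \setminus \{z_k, \infty\}, \qquad V_2 = \mathbb{P}^1 \setminus \{0, z_1, \ldots, z_{k-1}\}.
\]
Their intersection $V_1\cap V_2 = \mathbb{P}^1 \setminus \{0, z_1,\ldots,z_k,\infty\}$ is exactly the locus where $M(z)$ is holomorphic and invertible. So $M(z): V_1\cap V_2 \to GL(n,\mathcal O)$ defines a rank-$n$ vector bundle $\mathcal V$ on $\mathbb{P}^1$ via the \v{C}ech cocycle associated to the cover $\{V_1,V_2\}$.

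Grothendieck's splitting theorem gives $\mathcal V \cong \bigoplus_{i=1}^n \mathcal O(d_i)$ for integers $d_i$ uniquely determined up to permutation (with $\sum d_i = c_1(\det \mathcal V)$). Because $V_1$ and $V_2$ are both Stein open Riemann surfaces (complements of finite sets in $\mathbb{P}^1$), every holomorphic line bundle on them is trivial, so each $\mathcal O(d_i)|_{V_j}$ is trivial, and hence $\mathcal V|_{V_1}$ and $\mathcal V|_{V_2}$ admit trivializations in two ways: one coming from the defining cocycle $M$, the other from the standard trivializations of $\bigoplus \mathcal O(d_i)$ on $\mathbb{P}^1\setminus\{\infty\}$ and $\mathbb{P}^1\setminus\{0\}$ (restricted to $V_1,V_2$). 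Comparing the two pairs of trivializations produces a matrix $A(z)$ holomorphic and invertible on $V_1$, a matrix $B(z)$ holomorphic and invertible on $V_2$, and the standard monomial gluing $D(z)=\operatorname{diag}(z^{d_1},\ldots,z^{d_n})$ on $\mathbb{C}^\times$, with
\[
M(z) = A(z)\cdot D(z) \cdot B(z)
\]
on $V_1 \cap V_2$. This is the required factorization, since $A$ is regular and invertible away from $\{z_k,\infty\}$ and $B$ is regular and invertible away from $\{0, z_1, \ldots, z_{k-1}\}$.

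The last step I would check carefully is that $A$ and $B$ are actually rational (not merely holomorphic on the prescribed open sets). This is where I expect the only real subtlety: since $M$ is rational, GAGA (or directly: a holomorphic map from a Zariski-open subset of $\mathbb{P}^1$ to $GL(n)$ which extends meromorphically to $\mathbb{P}^1$ is algebraic) forces both factors to be rational matrices, with poles and zeros of their determinants confined to the prescribed finite sets. Uniqueness of $A, B, D$ up to the obvious ambiguity (conjugation of $A$ on the right, $B$ on the left, and $D$ in between by a constant invertible matrix commuting with $D$) follows as in the previous lemma: any two factorizations of this form would produce a matrix both holomorphic at $\{z_k,\infty\}$ and holomorphic at $\{0, z_1, \ldots, z_{k-1}\}$, hence a section of an algebraic bundle on $\mathbb{P}^1$ with prescribed weights, which is constant in the appropriate sense.
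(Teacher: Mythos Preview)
Your approach is correct and genuinely different from the paper's. The paper uses a \emph{complex-analytic} cover: it picks a radius $R$ with $|z_{k-1}|<R<|z_k|$ and covers $\mathbb{P}^1$ by the disk $U_1=\mathbb{P}^1\setminus\{\infty\}$ and the outer region $U_2=\{|z|>R\}$, so that $M$ is holomorphic invertible on the annular overlap. Grothendieck's theorem then yields analytic factors $A,B$, and rationality is established afterwards by writing $A\sim M\,B^{-1}D^{-1}$ near $z_k,\infty$ (and symmetrically for $B$) to see that both extend meromorphically to $\mathbb{P}^1$. Your Zariski cover $V_1=\mathbb{P}^1\setminus\{z_k,\infty\}$, $V_2=\mathbb{P}^1\setminus\{0,z_1,\dots,z_{k-1}\}$ makes the cocycle $M$ \emph{algebraic} from the outset, so the algebraic form of Grothendieck's splitting applies and $A,B$ are regular on $V_1,V_2$ without any further argument; invoking GAGA is unnecessary (and your parenthetical ``directly'' is in fact the paper's rationality step). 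A pleasant byproduct of your version is that it never uses the hypothesis $|z_1|<\cdots<|z_k|$: the Zariski cover does not care about moduli, so the same proof gives the factorization for any partition of the degeneracy locus into $\{z_k,\infty\}$ versus $\{0,z_1,\dots,z_{k-1}\}$, whereas the paper's annular cover genuinely needs a circle separating $z_k$ from the rest. The uniqueness discussion at the end of your proposal is a harmless extra, not required for the lemma as stated.
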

\begin{proof}
Choose $R \in \R$ such that for all $z_i$:
\[
|z_i|<R<|z_k|.
\]
Consider \v{C}ech covering of $\P^1$ by complex-analytic charts
\[
U_1 = \P^1 \setminus \{\infty\}, \ \ U_2 = \{z: |z|>R\}.
\]

As in the proof of the previous lemma, $M(z)$ defines the transition matrix of a vector bundle. By the same argument (which works in the complex-analytic setting), we obtain a factorization
\[
M(z) = A(z) D(z) B(z),
\]
where $A(z)$ and $B(z)$ are analytic nondegenerate functions on $U_1$ and $U_2$ respectively, and $D(z)$ is a diagonal matrix of monomials in $z$.

We need to prove that $A(z)$ and $B(z)$ are rational matrices. For $A(z)$ it is enough to check that its matrix elements grow not faster than polynomially when $z\to \infty$ and $z\to z_k$. In the neighborhood of infinity, $A(z)$ can be approximated as
\[
A(z) \sim M(z) B(\infty)^{-1} D(z)^{-1},
\]
which proves the claim. For $B(z)$ we need to check that it is meromorphic in neighborhoods of $z_i$ and $0$. It follows from the same argument:
\[
B(z) \sim A(z_i)^{-1} D(z)^{-1} M(z), \ \ z\to z_i.
\]
\end{proof}

The lemma allows us to separate a singularity at $\infty$ from other singularities.
Applying this lemma by induction, we get the following factorization theorem
\begin{proposition}
Suppose $M(z)$ is a rational matrix with entries in $\C^\times$ with singularities at $z_1, ..., z_n$ such that
\[
|z_1| < ... < |z_n|.
\]
Then $M(z)$ can be factorized as
\[
M(z) = D(z) A_1(z) A_2(z) ... A_n(z),
\]
where $A_i(z)$ is a matrix whose singularities are contained in $\{z_i, 0, \infty\}$, and $D(z)$ having singularities in $\{0, \infty\}$
\end{proposition}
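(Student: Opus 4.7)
The plan is to prove the proposition by induction on $n$, using a mirror version of the previous lemma that isolates the innermost singularity on the left at each step.

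First I would establish a mirror form of the previous lemma: given $M(z)$ with singularities at $z_1, \ldots, z_n$ and $\{0, \infty\}$ with $|z_1| < \cdots < |z_n|$, there is a factorization
$$M(z) = A'(z) \cdot D_1(z) \cdot B(z),$$
where $A'$ has singularities only in $\{z_1, 0\}$, $D_1$ is a diagonal matrix of monomials (singularities in $\{0, \infty\}$), and $B$ has singularities only in $\{\infty, z_2, \ldots, z_n\}$. I would obtain this either by applying the previous lemma to $\tilde M(w) := M(1/w)$, whose outermost singularity is $1/z_1$, and substituting back $w = 1/z$, or equivalently by rerunning the \v{C}ech covering argument with charts $\{|z| < R\}$ for $|z_1| < R < |z_2|$ and $\P^1 \setminus \{0\}$.

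With the mirror lemma in hand, the induction proceeds as follows. The base case $n = 0$ is trivial: take $D(z) := M(z)$. For the inductive step, assuming the result for matrices with fewer than $n$ singularities in $\C^\times$, I would apply the mirror lemma to isolate $z_1$, obtaining $M = A' \cdot D_1 \cdot B$. The product $D_1 \cdot B$ has singularities in $\{0, \infty, z_2, \ldots, z_n\}$, only $n - 1$ of which lie in $\C^\times$, so the inductive hypothesis yields
$$D_1(z) \cdot B(z) = D''(z) \cdot A_2(z) \cdots A_n(z),$$
with $D''$ having singularities in $\{0, \infty\}$ and $A_i$ having singularities in $\{z_i, 0, \infty\}$ for $i = 2, \ldots, n$.

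To bring the diagonal factor to the leftmost position, I would commute $D''$ past $A'$: since $D''$ has no singularities outside $\{0, \infty\}$, conjugation by $D''$ preserves the singularity locus away from those points, so $A_1(z) := D''(z)^{-1} A'(z) D''(z)$ has singularities in $\{z_1, 0\} \cup \{0, \infty\} = \{z_1, 0, \infty\}$. Writing $A'(z) D''(z) = D''(z) A_1(z)$ and setting $D(z) := D''(z)$, I obtain $M(z) = D(z) \cdot A_1(z) \cdot A_2(z) \cdots A_n(z)$, the desired factorization. The main technical step, and the only real obstacle, is formulating the mirror version of the previous lemma; after that, the induction and the single commutation are both routine.
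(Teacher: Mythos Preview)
Your proof is correct and follows the same inductive strategy the paper indicates with its one-line ``Applying this lemma by induction.'' Your mirror version of the lemma (peeling off the innermost singularity on the left, obtained via $z \mapsto 1/z$) together with the commutation of the diagonal factor past $A'$ are exactly the details needed to obtain the stated ordering $A_1 \cdots A_n$, and both steps are routine as you note.
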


\end{appendices}

\end{document}